\newtheorem{theorem}{Theorem}[section]
\newtheorem{proposition}[theorem]{Proposition}
\newtheorem{lemma}[theorem]{Lemma}
\newtheorem{corollary}[theorem]{Corollary}
\theoremstyle{definition}
\theoremstyle{remark}
\newtheorem{remark}[theorem]{Remark}
\theoremstyle{fact}
\title{\textbf{An ensemble Kalman approach to randomized maximum likelihood estimation}}
\author{Pavlos Stavrinides\thanks{Georgia Institute of Technology, School of Computational Science and Engineering, \href{mailto:eqian@gatech.edu}{pstavrinides3@gatech.edu}} \and Elizabeth Qian\thanks{Georgia Institute of Technology, School of Aerospace Engineering and Computational Science and Engineering, \href{mailto:eqian@gatech.edu}{eqian@gatech.edu}}}
\date{}
\begin{document}
\maketitle

\begin{abstract}
    This work proposes ensemble Kalman randomized maximum likelihood estimation, a new derivative-free method for performing randomized maximum likelihood estimation, which is a method that can be used to generate approximate samples from posterior distributions in Bayesian inverse problems. The new method has connections to ensemble Kalman inversion and works by evolving an ensemble so that each ensemble member solves an instance of a randomly perturbed optimization problem. Linear analysis demonstrates that ensemble members converge exponentially fast to randomized maximum likelihood estimators and, furthermore, that the new method produces samples from the Bayesian posterior when applied to a suitably regularized optimization problem. The method requires that the forward operator, relating the unknown parameter to the data, be evaluated once per iteration per ensemble member, which can be prohibitively expensive when the forward model requires the evolution of a high-dimensional dynamical system. We propose a strategy for making the proposed method tractable in this setting based on a balanced truncation model reduction method tailored to the Bayesian smoothing problem. Theoretical results show near-optimality of this model reduction approach via convergence to an optimal approximation of the posterior covariance as a low-rank update to the prior covariance. Numerical experiments verify theoretical results and illustrate computational acceleration through model reduction.
\end{abstract}


\section{Introduction}\label{sec:Intro}

Inverse problems \cite{sanz-alonso_inverse_2023,bach_inverse_2024} appear in numerous disciplines across science, engineering, and medicine, with applications including ocean modeling \cite{rounce_quantifying_2020,yang_tipping_2020,iglesias_well-posed_2014,majumder_freshwater_2021}, medical imaging \cite{kaipio_statistical_2005,chung_numerical_2010,epstein_introduction_2007,scherzer_mathematical_2006}, engineering mechanics \cite{tanaka_inverse_1993}, and many more. Mathematically, the goal of an inverse problem is to infer an unknown parameter $\bv$ from data $\by$ that follow a measurement model that is defined via a forward operator $\bH$, which maps parameters to data, often corrupted by observational noise.

One common approach to inferring $\bv$ from $\by$ is to employ numerical optimization methods to minimize the misfit between the data and the action of the forward map on the parameter estimate \cite{hansen_discrete_2010,mueller_linear_2012}. It is also common to add a regularization term that penalizes distance between the estimate and a reference value that is determined from domain knowledge \cite{engl_regularization_2000}. Classical gradient-based optimization approaches for solving inverse problems include Newton/Gauss-Newton methods \cite{hanke_regularizing_1997,akcelik_parallel_2002,petra_inexact_2012,epanomeritakis_newton-cg_2008,zhu_inversion_2016,worthen_towards_2014}, trust region optimization methods \cite{conn_trust_2000}, and Krylov subspace methods \cite{dai_nonlinear_2011,akcelik_parallel_2002}. Recent widespread use of stochastic gradient algorithms in machine learning has also led to the use of stochastic gradient descent (SGD) in solving inverse problems \cite{ye_optimization_2019,lu_stochastic_2022,jin_convergence_2020}. However, in many large-scale applications, for example in the earth and atmospheric sciences, computing gradients can be intractable.  In such cases, derivative-free methods such as simulated annealing and particle swarm optimization methods~\cite{sen_global_2013,renyuan_combined_1996,fernandez-martinez_particle_2010} are appealing. A large family of derivative-free optimization methods is the ensemble Kalman inversion (EKI) family \cite{calvello_ensemble_2024,chada_iterative_2021,iglesias_ensemble_2013}, which was initially developed in the context of reservoir simulation \cite{emerick_ensemble_2013,emerick_investigation_2013,evensen_analysis_2018,gu_iterative_2007,hanu_subsampling_2023} and climate modeling \cite{bocquet_bayesian_2020,cleary_calibrate_2021,dunbar_calibration_2021,gottwald_supervised_2021,schneider_earth_2017}. The basic EKI algorithm \cite{iglesias_ensemble_2013} evolves an ensemble of particles according to a Kalman filter-type update rule \cite{katzfuss_understanding_2016,kalman_new_1960} to minimize a least squares functional. Analyses of the convergence of EKI \cite{qian_fundamental_2024,schillings_analysis_2017,schillings_convergence_2018,calvello_ensemble_2024,ding_ensemble_2021}, and its stochastic variant \cite{blomker_well_2019,blomker_continuous_2022,blomker_strongly_2018} show that ensemble members converge to  least squares minimizers of the data misfit in a subspace of the span of the ensemble associated with directions that are observable under the forward operator $\bH$. The convergence of the basic EKI algorithm is slow, occurring at a $1/\sqrt{i}$ rate (where $i$ is the iteration index), but acceleration techniques have been proposed, for example, via covariance inflation or localization \cite{chada_convergence_2022,huang_efficient_2022,armbruster_stabilization_2022,blomker_well_2019,tong_localized_2023,liu_dropout_2025,al-ghattas_non-asymptotic_2023}. Tikhonov EKI (TEKI) \cite{chada_tikhonov_2020} extends the basic EKI formulation \cite{iglesias_ensemble_2013} to a Tikhonov-regularized least squares minimization.

EKI methods have also been applied to solve Bayesian inverse problems, which treat all variables as random variables, endow the unknown parameter with a prior density, and use Bayes' theorem to update the prior to a posterior distribution by conditioning on the data~\cite{stuart_inverse_2010}. In the Bayesian setting, the posterior distribution provides an estimate of parameter uncertainty. In this Bayesian view, TEKI \cite{chada_tikhonov_2020} converges to a maximum a posteriori (MAP) estimator \cite{dashti_map_2013}. There also exist a number of EKI-related methods that produce approximate samples from the Bayesian posterior. The ensemble Kalman sampler (EKS) \cite{garbuno-inigo_interacting_2020,nusken_note_2019} uses Langevin dynamics \cite{roberts_exponential_1996} to directly perturb the ensemble and create an interacting particle system that yields approximate posterior samples. Work in \cite{huang_efficient_2022} develops ensemble adjustment Kalman inversion (EAKI), which updates particles deterministically with a pre-multiplier, and ensemble transform Kalman inversion (ETKI), which updates particles deterministically with a post-multiplier. Both EAKI and ETKI produce samples by matching moments of Gaussians. In \cite{huang_iterated_2022}, authors derive unscented Kalman inversion (UKI), and the unscented Kalman sampler (UKS), which apply the unscented Kalman filter to a stochastic dynamical system and compute statistics by deterministic quadrature rules.

Beyond ensemble Kalman methods, other popular methods for solving Bayesian inverse problems include:  variational inference \cite{peterson_mean_1987,opper_variational_2009}, which attempts to minimize the distance from a parameterized approximate density to the posterior, coupling techniques such as transport maps \cite{marzouk_sampling_2016, reich_dynamical_2011,el_moselhy_bayesian_2012}, which aim to transform a random variable from the prior density to an approximate random variable from the posterior density, and classical sampling approaches such as random walk Metropolis and preconditioned Crank-Nicolson, which use ideas from Markov chain Monte Carlo (MCMC) \cite{geyer_practical_1992,roberts_weak_1997,goodman_ensemble_2010,bardsley_mcmc-based_2012,beskos_sequential_2015,del_moral_sequential_2006} to sample directly from the posterior density. In randomized maximum likelihood estimation (RMLE) \cite{chen_ensemble_2012,gu_iterative_2007}, posterior samples can be generated through repeated maximization of a randomly perturbed likelihood function. In the randomize-then-optimize (RTO) method \cite{bardsley_randomize-then-optimize_2014}, samples are generated by repeatedly solving a modified version of the RMLE objective function that leads to a closed-form of the posterior density. However, these optimization-based approaches are gradient-based. In this work, we propose a new ensemble Kalman approach to randomized maximum likelihood estimation that can be used to sample from linear Bayesian posterior distributions. The new method is \textit{derivative-free}, and, unlike the aforementioned EAKI, ETKI, and EKS samplers, our approach is not based on designing particle dynamics to yield posterior samples, but rather on perturbing a least-squares minimization for each particle to solve. 

All of the above methods typically require many forward model evaluations, which is expensive in many applications, particularly when $\bH$ involves solving a high-dimensional system of (differential) equations. This computational challenge can be ameliorated via model reduction. Projection-based model reduction \cite{benner_survey_2015,antoulas_approximation_2005} can be used to approximate a high-dimensional linear dynamical system by projecting the full model onto a low dimensional subspace \cite{antoulas_approximation_2005,antoulas_interpolatory_2020}, and has been used in the inference setting, for example via the proper orthogonal decomposition (POD) \cite{willcox_balanced_2002,lipponen_electrical_2013,lieberman_parameter_2010,nguyen_model_2014}, and the reduced basis method for PDE-constrained optimization~\cite{qian_certified_2017} and data assimilation \cite{karcher_reduced_2018}. Another body of work has sought to exploit the inherent structure of the inverse problem when performing model reduction. The works of \cite{qian_model_2022,konig_time-limited_2023,freitag_inference-oriented_2024} show how balanced truncation \cite{gugercin_survey_2004,moore_principal_1981}, a popular system-theoretic projection-based model reduction method, can be adapted to the problem of inferring the initial condition of a high-dimensional dynamical system. These methods obtain reduced models by projecting the dynamical system onto a subspace spanned by the state directions that are most highly informed by the data relative to their prior uncertainty. This subspace has also been exploited for more efficient sampling in~\cite{spantini_optimal_2015,zahm_certified_2022}, and was shown to define optimal posterior approximations for linear Gaussian inverse problems in \cite{spantini_optimal_2015}.

The contributions of this work are the following:
\begin{enumerate}
    \item We propose ensemble Kalman RMLE, a new method which builds on ensemble Kalman inversion methods to perform randomized maximum likelihood estimation. 
    \item We present linear analysis of our new method. Our results show exponential convergence of (i)~particles to perturbed least squares minimizers, (ii) the ensemble mean to the standard minimum-norm least squares minimizer, and (iii) the ensemble covariance to the Hessian of the optimization evaluated at the minimizer. 
    \item We show that application of our theoretical results to a suitably defined regularized minimization yields exponential convergence of the particles to samples from the posterior distribution of a linear Gaussian Bayesian inverse problem, and of the ensemble mean and covariance to the Bayesian posterior mean and covariance.
    \item For the specific problem of inferring the initial condition of a high-dimensional linear dynamical system, we propose a strategy for reducing the computational cost of our method using the balanced truncation approach of~\cite{qian_model_2022} that yields near-optimal posterior covariance approximations.
    \item We demonstrate numerically our convergence theory, convergence to the Gaussian posterior and computational acceleration via model reduction.
\end{enumerate}

The remainder of this paper is organized as follows. \Cref{sec:Background} mathematically formulates the inverse problem we study and introduces the original EKI algorithm. \Cref{sec:new_method} introduces our new method, followed by a summary of the theoretical properties of the method. \Cref{sec:application_with_BTBI} formulates the Bayesian smoothing problem that we consider and proposes a strategy for accelerating our method in this setting using the balanced truncation approach of~\cite{qian_model_2022}.  Detailed theoretical analysis and proofs are presented in \Cref{sec:LinearAnalysis} followed by numerical results in \Cref{sec:Numerics}. \Cref{sec:Conclusions} concludes.

\section{Background}\label{sec:Background}

This section defines the inverse problem of interest in \Cref{subsec:ProblemForumlation} and introduces the basic ensemble Kalman inversion (EKI) method in \Cref{subsec:EKI_back}.

\subsection{Problem formulation}\label{subsec:ProblemForumlation}

We consider the inverse problem of inferring an unknown parameter of interest $\bv\in\R^d$ from observations $\by\in\R^n$ described by the following measurement model:
    \begin{align}
        \by = \bH(\bv) + \beps,
        \label{eq:inverse_problem}
    \end{align}
    where $\bH:\R^d\to \R^n$ is a called the forward map and $\beps\in\R^n$ represents measurement error. One way to estimate $\bv$ from $\by$ is to minimize the data misfit:
    \begin{align}
        \min_{\bv\in\R^d} (\by - \bH(\bv))^\top\bGamma^{-1}(\by - \bH(\bv)) \equiv\min_{\bv\in\R^d} \|\by - \bH(\bv)\|^2_{\bGamma^{-1}},
        \label{eq:EKI_min}
    \end{align}
where $\bGamma\in \R^{n\times n}$ is a weight matrix. In the statistical inference context, if the noise $\beps$ in~\eqref{eq:inverse_problem} has probability distribution $\beps\sim\mathcal{N}(\bzero,\bGamma)$, then minimizers of~\eqref{eq:EKI_min} are maximum likelihood estimators (MLEs) of $\bv$.

If the optimization problem~\eqref{eq:EKI_min} is ill-posed, and/or if prior information about the unknown $\bv$ is available from domain knowledge, it is common to reformulate the parameter estimation as a regularized minimization, 
\begin{align}
    &\min_{\bv\in\R^d} \big[(\by-\bH(\bv))^\top\bGamma^{-1}(\by - \bH(\bv)) + (\bv-\mupr)^\top\Gpr(\bv - \mupr)\big],\label{eq:TEKI_sum}
\end{align}
which adds a penalty on the distance from the estimate for $\bv$ from a reference value $\mupr\in\R^d$ weighted by positive definite $\Gpr\in\R^{d\times d}$.
Note that the regularized minimization~\eqref{eq:TEKI_sum} can be expressed as a least-squares minimization of the form~\eqref{eq:EKI_min} as follows:
\begin{align}
    \min_{\bv\in\R^d}(\by_{\text{RLS}} - \bH_\text{RLS}(\bv))^\top \bGamma_\text{RLS}^{-1}(\by_{\text{RLS}} - \bH_\text{RLS}(\bv)) \equiv \min_{\bv\in\R^d}\|\by_{\text{RLS}} - \bH_\text{RLS}(\bv)\|^2_{\bGamma_\text{RLS}^{-1}},
    \label{eq:TEKI_min}
\end{align}
where we have defined the following regularized least-squares (RLS) quantities:
\begin{align}
    \by_{\text{RLS}} = 
    \begin{bmatrix}
        \by \\ \mupr
    \end{bmatrix}, &&
    \bH_\text{RLS}(\bv) = 
    \begin{bmatrix}
        \bH(\bv) \\ \bv
    \end{bmatrix},
    &&
    \bGamma_\text{RLS} = 
    \begin{bmatrix}
        \bGamma & \\
        & \Gpr
    \end{bmatrix}.
    \label{eq:TEKI_notation}
\end{align}
The minimizer of~\eqref{eq:TEKI_min} has the following connection to the Bayesian inferential approach: if the parameter $\bv$ is endowed with a Gaussian prior distribution $\mathcal{N}(\mupr,\Gpr)$, then the Bayesian posterior density $\bbP(\bv|\by)$ has the form:
\begin{align}
    \bbP(\bv|\by)\propto \exp\left(-\frac{1}{2} \|\by_\text{RLS} - \bH_\text{RLS}(\bv)\|^2_{\bGamma_\text{RLS}^{-1}}\right),
    \label{eq:posterior_density}
\end{align}
The minimizer of~\eqref{eq:TEKI_min} therefore coincides with the Bayesian maximum a posteriori (MAP) estimator~\cite{dashti_map_2013,chada_tikhonov_2020,stuart_inverse_2010,sanz-alonso_inverse_2023}.

We have thus far considered inverse problem formulations for a general nonlinear forward operator $\bH$. When $\bH$ is \textit{linear}, further connections between the optimization and statistical perspectives are possible.
Note that for linear $\bH$, the standard the norm-minimizing solution of~\eqref{eq:EKI_min} is
    \begin{align*}
        \bv^\star =     \big(\bH^\top\bGamma^{-1}\bH\big)^\dagger \bH^\top\bGamma^{-1}\by \equiv \bH^+\by,
    \end{align*}
    where $\dagger$ denotes the Moore-Penrose pseudoinverse and $\bH^+ = \big(\bH^\top\bGamma^{-1}\bH\big)^\dagger \bH^\top\bGamma^{-1}$ is the weighted pseudoinverse. 
Then, samples from the Gaussian distribution $\mathcal{N}\big(\bv^\star,(\bH^\top\bGamma^{-1}\bH)^\dagger\big)$ may be obtained via \textit{randomized maximum likelihood estimation} (RMLE) \cite{gu_iterative_2007,chen_ensemble_2012}, which solves instances of the following randomized least squares problem:
\begin{align}
    \min_{\bv\in\R^d} \|\by + \beps - \bH\bv\|_{\bGamma^{-1}}^2, 
    \label{eq:RMLE_min}
\end{align}
where one draws one realization of $\beps\sim\mathcal{N}(\bzero,\bGamma)$ per sample.
For linear Gaussian Bayesian inverse problems, applying RMLE to the regularized least-squares operators~\eqref{eq:TEKI_notation} yields samples from the Bayesian posterior distribution $\mathcal{N}(\mupos,\Gpos)$, where
    \begin{align}
        \bGamma_\text{pos} = \big(\bH^\top\bGamma^{-1}\bH + \Gpr^{-1}\big)^{-1}, && \bmu_\text{pos} = \bGamma_\text{pos}\big(\bH^\top\bGamma^{-1}\by + \Gpr^{-1}\mupr\big).
        \label{eq:Gaussian_posterior}
    \end{align}
Note that in the optimization perspective, the posterior mean is the norm-minimizing solution of \eqref{eq:TEKI_min}, i.e., $\mupos = \bv^\star_\text{RLS} =  \bH^+_\text{RLS}\by_\text{RLS}$, and the posterior precision is the Hessian of \eqref{eq:TEKI_min} evaluated at the minimizer, i.e., $\Gpos^{-1} = \bH_\text{RLS}^\top \bGamma_\text{RLS}^{-1}\bH_\text{RLS}$.

\subsection{Ensemble Kalman Inversion (EKI)}\label{subsec:EKI_back}

Ensemble Kalman Inversion (EKI) \cite{iglesias_ensemble_2013} is a derivative-free, iterative method for solving weighted least squares problems of the form \eqref{eq:EKI_min}. 
The method begins with an initial ensemble of size $J$, $\big\{\bv_0^{(1)},\ldots, \bv_0^{(J)}\big\}\in\R^d$, e.g., drawn from a suitable prior distribution, and evolves the ensemble for iterations $i=0,1,2,\ldots$, via the update step,
\begin{align}
    \bv_{i+1}^{(j)} = \bv_{i}^{(j)} + \bK_i \big(\by_i^{(j)} - \bh_i^{(j)}\big), \text{ where } \bK_i = \CovE\big[\bv_i^{(1:J)},\bh_i^{(1;J)}\big]\cdot\big(\CovE\big[\bh_i^{(1:J)}\big] + \bGamma\big)^{-1},
    \label{eq:EKI_update_step}
\end{align}
and $\bh_i^{(j)} = \bH\big(\bv_i^{(j)}\big)$, for all $j = 1,\ldots,J
$. Throughout this work, we use $\E$ and $\Cov$ to denote the true expected value and covariance, respectively, and $\EE$ and $\CovE$ to denote the empirical ensemble mean and covariance, respectively. That is, given$\{\ba^{(j)}\}_{j=1}^J$ and $\{\bb^{(j)}\}_{j=1}^J$, we define:
    \begin{align*}
        \EE\big[\ba^{(1:J)}\big] = \frac{1}{J}\sum_{j=1}^J \ba^{(j)}, && \CovE \big[\ba^{(1:J)},\bb^{(1:J)}\big] = \frac{1}{J-1}\sum_{j=1}^J\big(\ba^{(j)}- \EE\big[\ba^{(j)}\big]\big)\big(\bb^{(j)} - \EE\big[\bb^{(j)}\big]\big)^\top,
    \end{align*}
and let $\CovE\big[\ba^{(1:J)}\big] = \CovE \big[\ba^{(1:J)},\ba^{(1:J)}\big]$ for brevity. 
The observation $\by_i^{(j)}$ used to define the update step~\eqref{eq:EKI_update_step} can be defined in two ways, yielding two variants of the basic EKI algorithm:
\begin{subequations}
    \begin{align}
        &\text{Deterministic EKI: }&&\by_i^{(j)} = \by,\label{eq:DetEKI}\\
         &\text{Stochastic EKI: }&& \by_i^{(j)} = \by + \beps_i^{(j)}, \text{ with }\beps_i^{(j)}\sim\mathcal{N}(\bzero,\bSigma), \text{ i.i.d.}\label{eq:StochEKI}
    \end{align}
\end{subequations}
In stochastic EKI, it is common to set $\bSigma=\bGamma$, so that the perturbations in~\eqref{eq:StochEKI} are drawn from a distribution that is consistent with the statistical interpretation of the noise in~\eqref{eq:inverse_problem}. 
\Cref{alg:EKI} summarizes both the deterministic and stochastic versions of basic EKI. 
We emphasize that the EKI algorithm is applicable to nonlinear $\bH$ and is adjoint- and derivative-free, only requiring forward evaluations of $\bH$.
The work~\cite{chada_tikhonov_2020} shows that EKI can be applied equally well to solve regularized least-squares problems of the form \eqref{eq:TEKI_min} by simply running \Cref{alg:EKI} using the RLS operators~\eqref{eq:TEKI_notation}.

For linear $\bH$, particles in both the deterministic and stochastic variants of the basic EKI algorithm have been shown to collapse to the minimum norm solution $\bv^\star$ of \eqref{eq:EKI_min} in a subspace of the range of the ensemble that is associated with observable directions under $\bH$, at a $1/\sqrt{i}$ rate~\cite{qian_fundamental_2024}. Thus, in the $i\to\infty$ limit, the algorithm is viewed as an optimizer and not as a sampler. Several EKI-related approaches have been proposed for sampling from Bayesian posteriors, for example by integrating the continuous-time limit of the EKI iteration up to $t=1$~\cite{calvello_ensemble_2024}, designing a system based on Langevin dynamics that converges to the posterior~\cite{garbuno-inigo_interacting_2020}, or using deterministic quadrature rules to compute statistics~\cite{huang_iterated_2022}. In this work, we propose a new sampling algorithm that combines classical EKI with randomized maximum likelihood estimation.

\begin{algorithm}[htb!]
    \caption{Ensemble Kalman Inversion (EKI)}\label{alg:EKI}
    \textbf{Input:} initial ensemble $\big\{\bv_0^{(1)},\ldots, \bv_0^{(J)}\big\}\subset\R^d$, forward operator $\bH:\R^d\to\R^n$, observations $\by\in\R^n$, observation perturbation covariance $\bSigma\in\R^{n\times n}$, weight matrix $\bGamma\in\R^{n\times n}$

    For deterministic EKI, set $\bSigma=\bzero$. For stochastic EKI, a common choice is $\bSigma = \bGamma$.
    \begin{algorithmic}[1]
        \For{$i=0,1,2,\ldots$}
            \State{Evolve ensemble: $\bh_i^{(j)} = \bH\big(\bv_i^{(j)}\big)$ for $j=1,\ldots, J$}
            \State{Compute sample covariances: $\CovE\big[\bh_i^{(1:J)}\big]$ and $\CovE\big[\bv_i^{(1:J)},\bh_i^{(1;J)}\big]$}
            \State{Compute ensemble Kalman gain: $\bK_i = \CovE\big[\bv_i^{(1:J)},\bh_i^{(1;J)}\big]\cdot\big(\CovE\big[\bh_i^{(1:J)}\big] + \bGamma\big)^{-1}$}
            \State{Sample $\big\{\beps_i^{(1)}, \ldots, \beps_i^{(J)}\big\}\sim\mathcal{N}(\bzero,\bSigma)$ i.i.d.}
            \State{Perturb observations: $\by_i^{(j)} = \by + \beps_i^{(j)}$, for $j = 1,\ldots,J$}
            \State{Update particles: $\bv_{i+1}^{(j)} = \bv_i^{(j)} + \bK_i\big(\by_i^{(j)} - \bh_i^{(j)}\big)$ for $j = 1,\ldots, J$}
            \If{converged}
                \State{\Return ensemble mean $\EE\big[\bv_{i+1}^{(1:J)}\big]$}
            \EndIf{}
        \EndFor{}
    \end{algorithmic}
\end{algorithm}

\section{Ensemble Kalman Randomized Maximum Likelihood Estimation}\label{sec:new_method}

This section introduces our new method (\Cref{subsec:EKRMLE}) and summarizes its convergence properties (\Cref{subsec:idealized}).

\subsection{Ensemble Kalman Randomized Maximum Likelihood Estimation}\label{subsec:EKRMLE}

We propose a new ensemble Kalman method based on randomized maximum likelihood estimation. Our method is a variant of the basic EKI algorithm with $\by_i^{(j)}$ in the update step~\eqref{eq:EKI_update_step} defined as follows: 
\begin{align}
    \by_i^{(j)} = \by^{(j)} \equiv \by + \beps^{(j)}, \quad\text{where }\beps^{(j)}\sim \mathcal{N}(\bzero,\bGamma),
    \label{eq:EKIRMLE_update_step}
\end{align}
for all $j=1,\ldots, J$. We will show that for linear $\bH$, the $j$-th ensemble member therefore solves 
\begin{align}
\min_{\bv\in\R^d}\norm{\by^{(j)}- \bH(\bv)}_{\bGamma^{-1}}^2,
    \label{eq:EKIRMLE_min}
\end{align}
which is an instance of the randomized least-squares problem solved by randomized maximum likelihood estimation~\eqref{eq:RMLE_min}.
 We call our method \textit{ensemble Kalman randomized maximum likelihood estimation}, and summarize its implementation in \Cref{alg:our_method}.

One key difference between our method and basic EKI is in the data perturbation used to define the update step. Deterministic EKI (\Cref{alg:EKI} with the choice~\eqref{eq:DetEKI}) uses a zero perturbation for all particles for all iterations. Stochastic EKI (\Cref{alg:EKI} with the choice~\eqref{eq:StochEKI}) perturbs the data with a perturbation that is independently drawn for each particle \textit{at every iteration}. In our method (\Cref{alg:our_method}), the update step for each particle uses a perturbation that is independently drawn for each particle, but \textit{constant across iterations}. Another key difference between our method and basic EKI is that EKI (\Cref{alg:EKI}) returns a point estimate, namely, the ensemble mean, whereas our method (\Cref{alg:our_method}) returns a collection of samples which allows mean and covariance estimates to be computed. 

\begin{algorithm}[htb!]
    \caption{Ensemble Kalman Randomized Maximum Likelihood Estimation}\label{alg:our_method}
    \textbf{Input:} initial ensemble $\big\{\bv_0^{(1)},\ldots, \bv_0^{(J)}\big\}\subset\R^d$, forward operator $\bH:\R^d\to\R^n$, observations $\by\in\R^n$, weight matrix $\bGamma\in\R^{n\times n}$
    \begin{algorithmic}[1]
        \State{Sample $\big\{\beps^{(1)}, \ldots, \beps^{(J)}\big\}\sim\mathcal{N}(\bzero,\bGamma)$ i.i.d.}
        \State{Create perturbed observations: $\by^{(j)} = \by + \beps^{(j)}$, for $j = 1,\ldots,J$}
        \For{$i=0,1,2,\ldots$}
            \State{Evolve ensemble: $\bh_i^{(j)} = \bH\big(\bv_i^{(j)}\big)$ for $j=1,\ldots, J$}
            \State{Compute sample covariances: $\CovE\big[\bh_i^{(1:J)}\big]$ and $\CovE\big[\bv_i^{(1:J)},\bh_i^{(1;J)}\big]$}
            \State{Compute ensemble Kalman gain: $\bK_i = \CovE\big[\bv_i^{(1:J)},\bh_i^{(1;J)}\big]\cdot\big(\CovE\big[\bh_i^{(1:J)}\big] + \bGamma\big)^{-1}$}
            \State{Update particles: $\bv_{i+1}^{(j)} = \bv_i^{(j)} + \bK_i\big(\by^{(j)} - \bh_i^{(j)}\big)$ for $j = 1,\ldots, J$}
            \If{converged}
                \State{\Return ensemble $\big\{\bv_{i+1}^{(1)},\ldots, \bv_{i+1}^{(J)}\big\}$, ensemble mean $\EE\big[\bv_{i+1}^{(1:J)}\big]$ and covariance $\CovE\big[\bv_{i+1}^{(1:J)}\big]$}
            \EndIf{}
        \EndFor{}
    \end{algorithmic}
\end{algorithm}

\subsection{Linear mean field analysis}\label{subsec:idealized}

We now summarize our main theoretical results concerning convergence of our proposed method, deferring detailed technical proofs and intermediate results to \Cref{sec:LinearAnalysis}. Without loss of generality, we assume that the initial ensemble is drawn from a distribution with mean $\bmu_0\in\R^d$ and positive (semi-)definite covariance $\bCTilde_0\in\R^{d\times d}$. This allows us to interpret an arbitrary initial ensemble as being drawn from a distribution with mean and covariance given by the initial sample mean and covariance. 
We also assume that the data perturbations $\beps^{(j)}$ are independent of the initial ensemble. 

For $j = 1,2,\ldots, J$, let $\bv_\star^{(j)}$ denote the norm-minimizing solution of the perturbed least-squares minimization~\eqref{eq:EKIRMLE_min}, i.e.,
    \begin{align*}
        \bv_\star^{(j)} = \big(\bH^\top\bGamma^{-1}\bH\big)^\dagger \bH^\top\bGamma^{-1}\by^{(j)} \equiv\bH^+\by^{(j)},
    \end{align*}
    and denote by $\bomega_{i+1}^{(j)} = \bv_{i+1}^{(j)} -\bv_\star^{(j)}$ the residual between the $j$-th particle and its corresponding minimum-norm least squares solution. Let $\bC_i\equiv \CovE\big[\bv_i^{(1:J)}\big]$. Then:
\begin{proposition}\label{prop:state_residual}
        The $j$-th particle residual satisfies
        \begin{align}
    \bomega_{i+1}^{(j)} = \bM_i\bomega_i^{(j)} = \bM_{i:0}\bomega_0^{(j)},
        \label{eq:state_residual}
\end{align}
where we define $\bM_i \equiv \bI-\bK_i\bH $ and, for $i\geq k$, we introduce:
\begin{align}
    \bM_{i:k} \equiv \bM_i\bM_{i-1}\cdots \bM_{k+1}\bM_k = \prod_{\ell=k}^i \bM_\ell.
    \label{eq:compound_bbM_only}
\end{align}
\end{proposition}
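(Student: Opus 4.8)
The plan is to subtract the (iteration-independent) target $\bv_\star^{(j)}$ from the update step and exploit linearity of $\bH$ to isolate a contraction acting on the residual. Since $\by^{(j)} = \by + \beps^{(j)}$ is fixed across iterations, so is $\bv_\star^{(j)} = \bH^+\by^{(j)}$; subtracting it from the update rule~\eqref{eq:EKI_update_step} with data~\eqref{eq:EKIRMLE_update_step} and using $\bh_i^{(j)} = \bH\bv_i^{(j)}$ gives
\begin{align*}
\bomega_{i+1}^{(j)} = \bomega_i^{(j)} + \bK_i\big(\by^{(j)} - \bH\bv_i^{(j)}\big).
\end{align*}
Writing $\by^{(j)} - \bH\bv_i^{(j)} = \big(\by^{(j)} - \bH\bv_\star^{(j)}\big) - \bH\bomega_i^{(j)}$ and collecting terms yields
\begin{align*}
\bomega_{i+1}^{(j)} = (\bI - \bK_i\bH)\bomega_i^{(j)} + \bK_i\big(\by^{(j)} - \bH\bv_\star^{(j)}\big) = \bM_i\bomega_i^{(j)} + \bK_i\big(\by^{(j)} - \bH\bv_\star^{(j)}\big),
\end{align*}
so everything reduces to showing that the Kalman gain annihilates the optimal residual, i.e.\ $\bK_i\big(\by^{(j)} - \bH\bv_\star^{(j)}\big) = \bzero$.

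The crux is this annihilation identity, which I would establish in two steps. First, since $\bv_\star^{(j)}$ is the norm-minimizing solution of~\eqref{eq:EKIRMLE_min}, it satisfies the normal equations $\bH^\top\bGamma^{-1}\big(\by^{(j)} - \bH\bv_\star^{(j)}\big) = \bzero$; equivalently, the optimal residual is $\bGamma^{-1}$-orthogonal to the range of $\bH$ (this follows directly from $\bv_\star^{(j)} = \bH^+\by^{(j)}$ and $\mathrm{range}(\bH^\top\bGamma^{-1}) = \mathrm{range}(\bH^\top)$). Second, for linear $\bH$ we have $\CovE\big[\bv_i^{(1:J)},\bh_i^{(1:J)}\big] = \bC_i\bH^\top$ and $\CovE\big[\bh_i^{(1:J)}\big] = \bH\bC_i\bH^\top$, giving the explicit gain $\bK_i = \bC_i\bH^\top\big(\bH\bC_i\bH^\top + \bGamma\big)^{-1}$. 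A push-through (matrix inversion) identity then rewrites this as
\begin{align*}
\bK_i = \big(\bI + \bC_i\bH^\top\bGamma^{-1}\bH\big)^{-1}\bC_i\bH^\top\bGamma^{-1},
\end{align*}
which is valid because $\bI + \bC_i\bH^\top\bGamma^{-1}\bH$ is invertible: the product of the positive semidefinite matrices $\bC_i$ and $\bH^\top\bGamma^{-1}\bH$ has nonnegative real spectrum, so the eigenvalues of $\bI + \bC_i\bH^\top\bGamma^{-1}\bH$ are bounded below by $1$. In this form the rightmost factor is $\bH^\top\bGamma^{-1}$, so applying $\bK_i$ to the optimal residual and invoking the normal equations gives $\bK_i\big(\by^{(j)} - \bH\bv_\star^{(j)}\big) = \bzero$ immediately.

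With the annihilation identity in hand, the one-step recursion $\bomega_{i+1}^{(j)} = \bM_i\bomega_i^{(j)}$ follows directly, and a routine induction telescopes it into $\bomega_{i+1}^{(j)} = \bM_i\bM_{i-1}\cdots\bM_0\bomega_0^{(j)} = \bM_{i:0}\bomega_0^{(j)}$, matching the definition~\eqref{eq:compound_bbM_only}. I expect the main obstacle to be the push-through rewriting of $\bK_i$ together with its invertibility justification: one must verify the identity $\bC_i\bH^\top(\bH\bC_i\bH^\top+\bGamma)^{-1} = (\bI+\bC_i\bH^\top\bGamma^{-1}\bH)^{-1}\bC_i\bH^\top\bGamma^{-1}$ (for instance by right-multiplying both sides by $\bH\bC_i\bH^\top+\bGamma$ and factoring $\bI + \bC_i\bH^\top\bGamma^{-1}\bH$ out of the result), and confirm invertibility even though $\bC_i$ is only positive semidefinite here, being an empirical covariance. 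Alternatively, the annihilation can be shown without the rewriting by arguing directly that $\bH^\top\bGamma^{-1}r = \bzero$ forces $\bK_i r = \bzero$ via the same invertibility fact; everything else is elementary linear algebra.
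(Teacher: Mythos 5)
Your proof is correct and takes essentially the same route as the paper's: both reduce the claim to showing that the Kalman gain annihilates the optimal residual, combining the normal equations $\bH^\top\bGamma^{-1}\big(\by^{(j)}-\bH\bv_\star^{(j)}\big)=\bzero$ with the gain factorization $\bK_i = \big(\bI+\bC_i\bH^\top\bGamma^{-1}\bH\big)^{-1}\bC_i\bH^\top\bGamma^{-1}$, which the paper derives via two Woodbury identities and you verify by a direct push-through computation. Your explicit justification of the invertibility of $\bI+\bC_i\bH^\top\bGamma^{-1}\bH$ (nonnegative real spectrum of a product of positive semidefinite matrices) is a small point of added care that the paper leaves implicit.
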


\begin{proof}
For linear $\bH$, note that $\CovE\big[\bH\bv_i^{(1:J)}\big] = \bH\bC_i\bH^\top$, and $\CovE\big[\bv_i^{(1:J)}, \bH\bv_i^{(1:J)}\big] =\bC_i\bH^\top$, so that $\bK_{i} = \bC_i\bH^\top\big(\bH\bC_i\bH^\top + \bGamma\big)^{-1}$. Then, the update rule in \cref{eq:EKIRMLE_update_step} becomes:
        \begin{align}
            \bv_{i+1}^{(j)} &= \bv_i^{(j)} + \bK_i\big(\by^{(j)} - \bH\bv_i^{(j)}\big)= (\bI - \bK_{i}\bH)\bv_i^{(j)} + \bK_{i}\by^{(j)} = \bM_i \bv_i^{(j)} + \bK_i\by^{(j)}.
            \label{eq:new_update_step}
        \end{align}
Substituting \eqref{eq:new_update_step} into the residual $\bomega_{i+1}^{(j)}$ yields:
    \begin{align}
        \bomega_{i+1}^{(j)} &= \bv_{i+1}^{(j)} - \bv_\star^{(j)} = \bM_i\bv_i^{(j)} + \bK_i\by^{(j)} - \bv_\star^{(j)}\nonumber \\
        &=\bM_i\big(\bv_i^{(j)} - \bv_\star^{(j)}\big) + \bM_i\bv_\star^{(j)} - \bv_\star^{(j)} + \bK_i\by^{(j)}
         = \bM_i\bomega_i^{(j)} - \bK_i\big(\bH\bv_\star^{(j)} - \by^{(j)}\big).
         \label{eq:residual_part1}
    \end{align}
    To simplify the above expression, we note that via Woodbury identity:
    \begin{align*}
        \big(\bI + \bC_i\bH^\top\bGamma^{-1}\bH\big)^{-1} = \bI - \bC_i\bH^\top\big(\bH\bC_i\bH^\top + \bGamma\big)^{-1}\bH = \bI-\bK_i\bH\equiv\bM_i.
    \end{align*}
    Another application of the Woodbury identity yields:
    \begin{align*}
        \big(\bI + \bC_i\bH^\top\bGamma^{-1}\bH\big)^{-1} = \bI - \bC_i\bH^\top\bGamma^{-1}\big(\bI + \bH\bC_i\bH^\top\bGamma^{-1}\big)^{-1}\bH
    \end{align*}
Combining the above allows us to re-express the Kalman gain $\bK_i$ as:
\begin{align*}
    \bM_i\bC_i\bH^\top\bGamma^{-1} &= \bC_i\bH^\top\bGamma^{-1} - \bC_i\bH^\top\bGamma^{-1}\big(\bI + \bH\bC_i\bH^\top\bGamma^{-1}\big)^{-1}\bH\bC_i\bH^\top\bGamma^{-1}\\
    &= \bC_i\bH^\top\big(\bGamma^{-1} - \bGamma^{-1}\big(\bI + \bH\bC_i\bH^\top\bGamma^{-1}\big)^{-1}\bH\bC_i\bH^\top\bGamma^{-1}\big) = \bC_i\bH\big(\bH\bC_i\bH^\top + \bGamma\big)^{-1} = \bK_i.
\end{align*}
Finally, we notice that $\bH\bv_\star^{(j)} - \by^{(j)}$ in~\eqref{eq:residual_part1} is the least squares misfit of the norm-minimizing solution, and thus $\big(\bH\bv_\star^{(j)} - \by^{(j)}\big)\in\kernel\big(\bH^\top\bGamma^{-1}\big)$, so $\bK_i\big(\bH\bv_\star^{(j)} - \by^{(j)}\big) = \bM_i\bC_i\bH^\top\bGamma^{-1}\big(\bH\bv_\star^{(j)} - \by^{(j)}\big) = \bzero$, and $\bomega_{i+1}^{(j)} = \bM_i\bomega_i^{(j)}$. The second equality of~\eqref{eq:state_residual} follows directly.
\end{proof}

Note that in \eqref{eq:state_residual}, the misfit iteration matrices $\bM_i$ are random variables due to their dependence on $\bC_i$, which depends on the initial ensemble. We will therefore analyze the particle behavior in the mean-field limit where ensemble size $J\to\infty$. We denote by $\bCTilde_i$ the mean field ensemble covariance, and  similarly denote by $\bMTilde_i$ the mean-field limit of $\bM_i$, corresponding to
\begin{align}
    \bMTilde_i = \big(\bI+\bCTilde_i\bH^\top\bGamma^{-1}\bH\big)^{-1},
    \label{eq:meanfield_compound_bbM_only}
\end{align}
and define $\bMTilde_{i:0}=\bMTilde_i\bMTilde_{i-1}\cdots\bMTilde_0$. These mean-field operator limits define an iteration which governs the behavior of the mean-field residual, $\bomegaTilde_i^{(j)}$, as follows:

\begin{align}
        \bomegaTilde_{i+1}^{(j)} = \bMTilde_i\bomegaTilde_i^{(j)} = \bMTilde_{i:0}\bomega_0^{(j)}.
        \label{eq:idealized_state_misfit}
    \end{align} 
Understanding the evolution of the particle residual therefore requires an understanding of the spectral properties of the map $\bMTilde_i$, which are characterized by the following eigenvalue problem,
    \begin{align}
        \bCTilde_{i}\bH^\top\bGamma^{-1}\bH\bu_{\ell, i} = \lambda_{\ell, i}\bu_{\ell, i}, \quad \ell = 1,\ldots, d.
        \label{eq:statespace_GEV_alt}
    \end{align}
Denote by $\bvTilde_i^{(j)}$ the mean-field particle corresponding to the mean-field residual $\bomegaTilde_i^{(j)} = \bvTilde_i^{(j)}-\bv_\star^{(j)}$, and note that~\eqref{eq:idealized_state_misfit} implies:
\begin{align}
       \bvTilde_{i+1}^{(j)} =\bMTilde_{i:0}\bv_0^{(j)} + \big(\bI - \bMTilde_{i:0}\big)\bv_\star^{(j)} = \bMTilde_{i:0}\bv_0^{(j)} + \big(\bI - \bMTilde_{i:0}\big)\big(\bH^\top\bGamma^{-1}\bH\big)^\dagger\bH^\top\bGamma^{-1}\by^{(j)}.
       \label{eq:mean_field}
    \end{align}
Then, the mean-field covariance satisfies:
\begin{align}
        \bCTilde_{i+1} = \Cov\big[\bvTilde_{i+1}^{(j)}\big] &= \bMTilde_{i:0}\bCTilde_0\bMTilde_{i:0}^{\top} + \big(\bI - \bMTilde_{i:0}\big)\big(\bH^\top\bGamma^{-1}\bH\big)^\dagger\big(\bI - \bMTilde_{i:0}\big)^\top.
        \label{eq:idealized_cov}
    \end{align}
We find that under~\eqref{eq:idealized_cov}, the eigenvectors of~\eqref{eq:statespace_GEV_alt} are constant across iterations, and that the corresponding eigenvalues can be given in terms of the sequence of preceding eigenvalues:

\begin{proposition}\label{prop:GEV_statespace_alt}
            The eigenvectors of~\eqref{eq:statespace_GEV_alt} are constant, satisfying $\bu_{\ell, i+1} = \bu_{\ell, i}$ for all $i\geq 0$. The eigenvalue associated with the $\ell$-th eigenvector satisfies:
            \begin{align}
                \lambda_{\ell, i+1} = 1 - 2\prod_{k=0}^i(1+\lambda_{\ell, k})^{-1} + (1+\lambda_{\ell, 0})\prod_{k=0}^i(1+\lambda_{\ell, k})^{-2}.
                \label{eq:idealized_eigen_evolution}
            \end{align}
        \end{proposition}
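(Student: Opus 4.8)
The plan is to prove both claims simultaneously by strong induction on $i$, exhibiting a \emph{single} eigenbasis --- the one that diagonalizes the map at $i=0$ --- that remains an eigenbasis of the iteration for every $i$. For brevity write $\bA \equiv \bH^\top\bGamma^{-1}\bH$, which is symmetric positive semidefinite, so each $\bCTilde_i\bA$ is a product of symmetric PSD matrices and is diagonalizable with real nonnegative eigenvalues. Let $\{\bu_\ell\}_{\ell=1}^d$ be the eigenbasis at $i=0$, with $\bCTilde_0\bA\bu_\ell = \lambda_{\ell,0}\bu_\ell$ as in \eqref{eq:statespace_GEV_alt}. The inductive hypothesis $H(i)$ is that $\bCTilde_k\bA\bu_\ell = \lambda_{\ell,k}\bu_\ell$ for all $\ell$ and all $k\le i$, with $\lambda_{\ell,k}$ given by \eqref{eq:idealized_eigen_evolution}. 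Because the $\bu_\ell$ form a basis, establishing $H(i)$ for all $i$ shows the eigenvectors may be chosen constant.

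I would first record two spectral facts. If $\bCTilde_k\bA\bu_\ell=\lambda_{\ell,k}\bu_\ell$, then $(\bI+\bCTilde_k\bA)\bu_\ell=(1+\lambda_{\ell,k})\bu_\ell$, so by \eqref{eq:meanfield_compound_bbM_only} $\bMTilde_k\bu_\ell = (1+\lambda_{\ell,k})^{-1}\bu_\ell$; hence, under $H(i)$, the compound map satisfies $\bMTilde_{i:0}\bu_\ell = p_{\ell,i}\bu_\ell$ with $p_{\ell,i}\equiv\prod_{k=0}^i(1+\lambda_{\ell,k})^{-1}$. Multiplying the eigen-relation on the left by $\bA$ gives $\bA\bCTilde_k(\bA\bu_\ell)=\lambda_{\ell,k}(\bA\bu_\ell)$, so $\bA\bu_\ell$ is an eigenvector of $\bMTilde_k^\top=(\bI+\bA\bCTilde_k)^{-1}$ with the same eigenvalue, yielding the adjoint relation $\bMTilde_{i:0}^\top(\bA\bu_\ell)=p_{\ell,i}\,\bA\bu_\ell$. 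These two identities are precisely what let the transpose-laden covariance recursion collapse onto the $\bu_\ell$ direction.

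I would then apply $\bA\bu_\ell$ on the right of the covariance recursion \eqref{eq:idealized_cov} and simplify term by term. For the first term, the adjoint relation followed by the base case gives $\bMTilde_{i:0}\bCTilde_0\bMTilde_{i:0}^\top\bA\bu_\ell = \bMTilde_{i:0}\bCTilde_0(p_{\ell,i}\bA\bu_\ell)=\lambda_{\ell,0}\,p_{\ell,i}^2\,\bu_\ell$. For the second term, $(\bI-\bMTilde_{i:0})^\top\bA\bu_\ell=(1-p_{\ell,i})\bA\bu_\ell$, and I claim $(\bI-\bMTilde_{i:0})\bA^\dagger\bA\bu_\ell=(1-p_{\ell,i})\bu_\ell$, so the term equals $(1-p_{\ell,i})^2\bu_\ell$. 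Summing and expanding $(1-p_{\ell,i})^2$ gives $\bCTilde_{i+1}\bA\bu_\ell = \big[\,1 - 2p_{\ell,i} + (1+\lambda_{\ell,0})p_{\ell,i}^2\,\big]\bu_\ell$, which is exactly \eqref{eq:idealized_eigen_evolution}; this simultaneously shows $\bu_\ell$ is an eigenvector of $\bCTilde_{i+1}\bA$, closing the induction on both the eigenvectors and the eigenvalues.

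The main obstacle is the pseudoinverse term $(\bI-\bMTilde_{i:0})\bA^\dagger(\bI-\bMTilde_{i:0})^\top$: the eigenbasis $\{\bu_\ell\}$ is not orthogonal, so $\bA^\dagger$ is not diagonal in these coordinates and $\bu_\ell$ need not lie in $\mathrm{range}(\bA)$, so one cannot naively write $\bA^\dagger\bA\bu_\ell=\bu_\ell$. The resolution I would use is that $\bMTilde_{i:0}$ fixes $\kernel(\bA)$ pointwise: if $\bA\bu=\bzero$ then $\bCTilde_k\bA\bu=\bzero$ and $\bMTilde_k\bu=\bu$ for every $k$, whence $\bMTilde_{i:0}\bu=\bu$ and $(\bI-\bMTilde_{i:0})$ annihilates $\kernel(\bA)$. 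Since $\bI-\bA^\dagger\bA$ is the orthogonal projector onto $\kernel(\bA)$, this yields the identity $(\bI-\bMTilde_{i:0})\bA^\dagger\bA=(\bI-\bMTilde_{i:0})$ --- equivalently, $\bI-\bMTilde_{i:0}$ has $\bA$ as a right factor, which together with $\bA\bA^\dagger\bA=\bA$ makes the pseudoinverse inert; applied to $(1-p_{\ell,i})\bA^\dagger\bA\bu_\ell$ it produces $(1-p_{\ell,i})\bu_\ell$ irrespective of whether $\bA$ is singular, completing the second-term computation. A minor bookkeeping point I would also confirm is that when $\bCTilde_0$ is only positive semidefinite the iterates remain in $\mathrm{range}(\bCTilde_0)$, on which the generalized eigenproblem \eqref{eq:statespace_GEV_alt} is well posed and $\bCTilde_i\bA$ diagonalizable; if $\bCTilde_0$ is positive definite this holds automatically.
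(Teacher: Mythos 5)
Your proof is correct, and it takes a genuinely different route from the paper's. The paper never performs the induction in state space at all: it first proves the analogous spectral result for the \emph{observation-space} eigenproblem~\eqref{eq:GEV_idealized} (Proposition~\ref{prop:GEV_idealized}, by induction on $\bH\bCTilde_i\bH^\top\bw = \lambda\bGamma\bw$ using the push-forward covariance recursion~\eqref{eq:idealized_HGH}), and then transfers that result to~\eqref{eq:statespace_GEV_alt} through the correspondence $\bu_{\ell,i} = \lambda_{\ell,i}^{-1}\bCTilde_i\bH^\top\bw_{\ell,i}$ for the $r$ directions with positive eigenvalues, running a second induction to show these lifted vectors are constant. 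You instead run a single, self-contained induction directly in state space: you test the recursion~\eqref{eq:idealized_cov} against $\bH^\top\bGamma^{-1}\bH\,\bu_\ell$, using the adjoint eigenrelation $\bMTilde_{i:0}^\top\big(\bH^\top\bGamma^{-1}\bH\,\bu_\ell\big) = p_{\ell,i}\,\bH^\top\bGamma^{-1}\bH\,\bu_\ell$, and you neutralize the pseudoinverse term via the identity $\big(\bI-\bMTilde_{i:0}\big)\big(\bH^\top\bGamma^{-1}\bH\big)^\dagger\bH^\top\bGamma^{-1}\bH = \bI-\bMTilde_{i:0}$, which follows from the observation that $\bMTilde_{i:0}$ fixes $\kernel(\bH)$ pointwise --- this is exactly the subtlety that the paper sidesteps by working in observation space, where the corresponding projector $\bH\bH^+$ acts as the identity on the relevant ranges. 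What each approach buys: the paper's route makes the state-space proposition nearly free given the observation-space analysis (which is needed anyway for the data-misfit convergence results), and it establishes the explicit relation between the two sets of eigenvectors that is reused later (e.g., in the projector constructions); your route avoids any dependence on the observation-space machinery, covers all $d$ eigendirections (including the kernel directions with zero eigenvalues, which the paper's lifted construction does not explicitly address), and treats the $\range$/$\kernel$ issues around the pseudoinverse more carefully than the paper does. The one assertion you lean on without proof --- that a product of two symmetric PSD matrices is diagonalizable with real nonnegative eigenvalues, so that $\{\bu_\ell\}_{\ell=1}^d$ is a genuine basis --- is a standard fact, but since your argument uses it to conclude that the preserved eigenvectors account for the \emph{entire} spectrum of each $\bCTilde_i\bH^\top\bGamma^{-1}\bH$, it deserves a citation or a short proof in a polished write-up.
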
   
Notice that all eigenvalues of~\eqref{eq:statespace_GEV_alt} are non-negative because nonzero eigenvalues of $\bCTilde_i\bH^\top\bGamma^{-1}\bH$ must be equal to nonzero eigenvalues of $\bGamma^{-\frac12}\bH\bCTilde_i\bH^\top\bGamma^{-\frac12}$, with zero eigenvalues differing only in multiplicity. From \eqref{eq:idealized_eigen_evolution}, we see that zero eigenvalues remain zero and strictly positive eigenvalues remain strictly positive across all iterations. We show in \Cref{sec:LinearAnalysis} that for all $i\geq 1$, nonzero eigenvalues of~\eqref{eq:statespace_GEV_alt} are monotone increasing across iterations and bounded above by $1$, and further, that:

\begin{proposition}\label{prop:limit}
        Nonzero eigenvalues of~\eqref{eq:statespace_GEV_alt} converge exponentially fast, i.e.,:
        \begin{align*}
            \lim_{i\to\infty}\lambda_{\ell, i+1} = 1 \text{, with } 1-\lambda_{\ell, i+1} \leq 2{(1+\lambda_{\ell, 0}})^{-1} e^{-(i-1)\gamma_\ell},
        \end{align*}
        where $\gamma_\ell = \lambda_{\ell, 0}(1+2\lambda_{\ell, 0})^{-1}$.
    \end{proposition}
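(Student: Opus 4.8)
The plan is to work entirely with the scalar recursion~\eqref{eq:idealized_eigen_evolution} furnished by \Cref{prop:GEV_statespace_alt}, treating the fixed index $\ell$ as a spectator. First I would abbreviate the product by writing $P_i = \prod_{k=0}^i (1+\lambda_{\ell,k})^{-1}$, so that~\eqref{eq:idealized_eigen_evolution} reads $\lambda_{\ell,i+1} = 1 - 2P_i + (1+\lambda_{\ell,0})P_i^2$, i.e.
\[
1 - \lambda_{\ell,i+1} = P_i\big(2 - (1+\lambda_{\ell,0})P_i\big).
\]
Since every $\lambda_{\ell,k}\ge 0$, each factor $(1+\lambda_{\ell,k})^{-1}\in(0,1]$ and in particular $(1+\lambda_{\ell,0})P_i\ge 0$, which immediately yields the one-sided estimate $1 - \lambda_{\ell,i+1}\le 2P_i$. (The same expression, together with $(1+\lambda_{\ell,0})P_i\le 1$, also recovers the upper bound $\lambda_{\ell,i+1}\le 1$ used earlier.)

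Next I would reduce the bound on $P_i$ to a bound on a single factor. Evaluating~\eqref{eq:idealized_eigen_evolution} at $i=0$ gives $\lambda_{\ell,1} = 1 - (1+\lambda_{\ell,0})^{-1} = \lambda_{\ell,0}/(1+\lambda_{\ell,0})$. Using the monotonicity of the nonzero eigenvalues for $i\ge 1$ (established earlier in \Cref{sec:LinearAnalysis}), every factor with $k\ge 1$ satisfies $\lambda_{\ell,k}\ge \lambda_{\ell,1}$, so
\[
P_i = (1+\lambda_{\ell,0})^{-1}\prod_{k=1}^i (1+\lambda_{\ell,k})^{-1} \le (1+\lambda_{\ell,0})^{-1}(1+\lambda_{\ell,1})^{-i}.
\]
The crucial point, and the step I expect to carry the whole argument, is to convert the geometric factor $(1+\lambda_{\ell,1})^{-i}$ into the claimed exponential rate. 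For this I would invoke the elementary inequality $\ln(1+x)\ge x/(1+x)$ for $x\ge 0$. Substituting $x=\lambda_{\ell,1}$ and using $1+\lambda_{\ell,1} = (1+2\lambda_{\ell,0})/(1+\lambda_{\ell,0})$, the right-hand side simplifies to exactly $\lambda_{\ell,0}/(1+2\lambda_{\ell,0}) = \gamma_\ell$, so $\ln(1+\lambda_{\ell,1})\ge\gamma_\ell$ and hence $(1+\lambda_{\ell,1})^{-i} = e^{-i\ln(1+\lambda_{\ell,1})}\le e^{-i\gamma_\ell}$.

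Combining the three displays gives $1-\lambda_{\ell,i+1}\le 2(1+\lambda_{\ell,0})^{-1}e^{-i\gamma_\ell}\le 2(1+\lambda_{\ell,0})^{-1}e^{-(i-1)\gamma_\ell}$, which is the stated bound (in fact with a slightly sharper exponent), and since $\gamma_\ell>0$ for nonzero eigenvalues this forces $\lambda_{\ell,i+1}\to 1$ exponentially. The only genuinely delicate ingredients are the exact identification of $\gamma_\ell$ with the log-inequality bound, which is what fixes the particular constant in the exponent, and the observation that the lower bound on the factors must use $\lambda_{\ell,1}$ rather than $\lambda_{\ell,0}$: the eigenvalue actually dips from $\lambda_{\ell,0}$ to the smaller value $\lambda_{\ell,1}=\lambda_{\ell,0}/(1+\lambda_{\ell,0})$ on the first step before increasing monotonically, which is precisely why the monotonicity statement is phrased for $i\ge 1$ and why naively bounding by $\lambda_{\ell,0}$ would be invalid.
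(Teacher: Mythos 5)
Your proof is correct and follows essentially the same route as the paper's: bound $1-\lambda_{\ell,i+1}$ by twice the eigenvalue product, use the $i\geq 1$ monotonicity (\Cref{lem:UB}) to reduce the product to the geometric factor $(1+\lambda_{\ell,1})^{-i}$, and convert that factor into the rate $\gamma_\ell$. The only cosmetic difference is the final elementary inequality — the paper uses $(1-x)^{\beta}\leq e^{-\beta x}$ with $1-\gamma_\ell = (1+\lambda_{\ell,1})^{-1}$, while you use $\ln(1+x)\geq x/(1+x)$, which is the same inequality in disguise — so your bound $2(1+\lambda_{\ell,0})^{-1}e^{-i\gamma_\ell}$ and the paper's $2(1+2\lambda_{\ell,0})^{-1}e^{-(i-1)\gamma_\ell}$ both imply the stated estimate.
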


Let $r$ denote the number of strictly positive (nonzero) eigenvalues of \eqref{eq:statespace_GEV_alt}. At $i=1$, arrange the $r$ strictly positive eigenvalues (and their respective eigenvectors) in non-increasing order $\lambda_{1,1}\geq \lambda_{2,1}\geq \cdots \geq \lambda_{r,1}$.  \Cref{cor:fixed_order_of_eigenvalues} establishes that this ordering is preserved across all iterations $i\geq 1$. Since the eigenvectors of \eqref{eq:statespace_GEV_alt} are constant, we drop the iteration index $i$ on the eigenvectors and consider the set $\{\bu_\ell\}_{\ell=1}^r$, normalized so that $\bU = [\bu_1,\ldots, \bu_r]$ satisfies $\bU^\top\bH^\top\bGamma^{-1}\bH\bU = \bI_r$. Let $\bLambda_{1:r}^{(i)} = \diag(\lambda_{1,i},\ldots, \lambda_{r,i})$. Then, $\bCTilde_i\bH^\top\bGamma^{-1}\bH\bU = \bU\bLambda_{1:r}^{(i)}$, and $\range(\bU)$ is an invariant subspace under $\bMTilde_{i}$. This subspace and its complement may be defined in terms of spectral projectors of $\bMTilde_{i:0}$, as follows: \begin{proposition}\label{prop:bbM_spectral_projectors}
    Let $\bP = \bU\bU^\top\bH^\top\bGamma^{-1}\bH$, and $\bS = \bI - \bP$. Then, $\bP$ and $\bS$ are complementary spectral projectors of $\bMTilde_{i:0}$. That is, $\bP^2 = \bP$, $\bS^2=\bS$ and $\bP\bMTilde_{i:0} = \bMTilde_{i:0}\bP$, $\bS\bMTilde_{i:0} = \bMTilde_{i:0}\bS$, with $\bP\bS = \bS\bP= \bzero$ and $\bP + \bS  =\bI_d$.
\end{proposition}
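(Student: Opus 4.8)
The plan is to separate the six assertions into two groups: the purely algebraic projector identities ($\bP^2=\bP$, $\bS^2=\bS$, $\bP\bS=\bS\bP=\bzero$, $\bP+\bS=\bI_d$), which follow from idempotency of $\bP$ alone, and the two commutation relations with $\bMTilde_{i:0}$, which require the spectral structure. Throughout I would exploit that both $\bCTilde_k$ and $\bH^\top\bGamma^{-1}\bH$ are symmetric, together with the normalization $\bU^\top\bH^\top\bGamma^{-1}\bH\bU=\bI_r$. For the algebraic group, since $\bP=\bU\bU^\top\bH^\top\bGamma^{-1}\bH$, I compute $\bP^2 = \bU\big(\bU^\top\bH^\top\bGamma^{-1}\bH\bU\big)\bU^\top\bH^\top\bGamma^{-1}\bH = \bU\bU^\top\bH^\top\bGamma^{-1}\bH = \bP$, so $\bP$ is idempotent; then $\bP+\bS=\bI_d$ is the definition of $\bS$, and $\bS^2=(\bI-\bP)^2=\bI-2\bP+\bP^2=\bS$ while $\bP\bS=\bP-\bP^2=\bzero=\bS\bP$ follow immediately.

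The main work is the commutation $\bP\bMTilde_{i:0}=\bMTilde_{i:0}\bP$; the companion relation for $\bS$ is then automatic because $\bS=\bI-\bP$ and $\bI$ commutes with everything. Since $\bMTilde_{i:0}=\bMTilde_i\cdots\bMTilde_0$ is a product of the factors $\bMTilde_k=\big(\bI+\bCTilde_k\bH^\top\bGamma^{-1}\bH\big)^{-1}$, it suffices to show that $\bP$ commutes with each $\bMTilde_k$; and since $\bMTilde_k$ is the inverse of $\bI+\bCTilde_k\bH^\top\bGamma^{-1}\bH$, it further suffices to show that $\bP$ commutes with $\bCTilde_k\bH^\top\bGamma^{-1}\bH$. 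To establish this I would use the eigenrelation $\bCTilde_k\bH^\top\bGamma^{-1}\bH\bU=\bU\bLambda_{1:r}^{(k)}$ recorded above the proposition, together with its transpose: symmetry of $\bCTilde_k$ and of $\bH^\top\bGamma^{-1}\bH$ turns it into the matching left relation $\bU^\top\bH^\top\bGamma^{-1}\bH\bCTilde_k=\bLambda_{1:r}^{(k)}\bU^\top$. Substituting both into the definition of $\bP$ gives
\begin{align*}
\bP\bCTilde_k\bH^\top\bGamma^{-1}\bH = \bU\big(\bU^\top\bH^\top\bGamma^{-1}\bH\bCTilde_k\big)\bH^\top\bGamma^{-1}\bH = \bU\bLambda_{1:r}^{(k)}\bU^\top\bH^\top\bGamma^{-1}\bH = \bCTilde_k\bH^\top\bGamma^{-1}\bH\bP,
\end{align*}
as required, where the last equality reuses the right-eigenrelation $\bU\bLambda_{1:r}^{(k)}=\bCTilde_k\bH^\top\bGamma^{-1}\bH\bU$ and the definition of $\bP$.

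The only delicate steps are the two reductions above. Passing commutation from $\bI+\bCTilde_k\bH^\top\bGamma^{-1}\bH$ to its inverse $\bMTilde_k$ is legitimate because that matrix is invertible (its eigenvalues are the $1+\lambda_{\ell,k}\ge 1$), so conjugating $\bP\big(\bI+\bCTilde_k\bH^\top\bGamma^{-1}\bH\big)=\big(\bI+\bCTilde_k\bH^\top\bGamma^{-1}\bH\big)\bP$ by $\bMTilde_k$ yields $\bP\bMTilde_k=\bMTilde_k\bP$; commutation with the product $\bMTilde_{i:0}$ then follows factor by factor. The other point to watch is that symmetry is exactly what converts the right-eigenrelation into the left-eigenrelation used in the display. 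With both commutation relations in hand, $\bP$ and $\bS$ are complementary idempotents that commute with $\bMTilde_{i:0}$, which is precisely the assertion that they are its complementary spectral projectors, onto $\range(\bU)$ and its complement respectively.
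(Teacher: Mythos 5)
Your proof is correct, and its algebraic half ($\bP^2=\bP$, $\bS^2=\bS$, $\bP\bS=\bS\bP=\bzero$, $\bP+\bS=\bI_d$) coincides with the paper's. For the commutation relations, however, you take a genuinely different route. The paper first establishes the explicit spectral representation $\bMTilde_i = \bU\big(\bLambda_{1:r}^{(i)}+\bI\big)^{-1}\bU^\top\bH^\top\bGamma^{-1}\bH + \bS$, by checking that multiplying this expression by $\bMTilde_i^{-1}=\bI+\bCTilde_i\bH^\top\bGamma^{-1}\bH$ returns the identity, and then computes $\bP\bMTilde_i$ and $\bMTilde_i\bP$ from that representation. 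You instead show that $\bP$ commutes with $\bCTilde_k\bH^\top\bGamma^{-1}\bH$ itself, by pairing the right eigenrelation $\bCTilde_k\bH^\top\bGamma^{-1}\bH\bU=\bU\bLambda_{1:r}^{(k)}$ with its transpose $\bU^\top\bH^\top\bGamma^{-1}\bH\bCTilde_k=\bLambda_{1:r}^{(k)}\bU^\top$ (valid by symmetry of $\bCTilde_k$ and $\bH^\top\bGamma^{-1}\bH$), and then push commutation through the inverse and through the product $\bMTilde_{i:0}$; both reductions are justified exactly as you state. Your route is arguably more self-contained: the paper's verification of its decomposition silently drops the cross term $\bCTilde_i\bH^\top\bGamma^{-1}\bH\bS$, whose vanishing is equivalent to the completeness statement $\bCTilde_i\bH^\top\bGamma^{-1}\bH=\bU\bLambda_{1:r}^{(i)}\bU^\top\bH^\top\bGamma^{-1}\bH$ and ultimately rests on the observation-space spectral analysis behind \Cref{prop:SpectralMaps_bcalM}, whereas your transpose trick needs nothing beyond the eigenrelation for $\bU$ recorded just before the proposition. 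What the paper's approach buys in return is the dyadic representation of $\bMTilde_i$ (and hence of $\bMTilde_{i:0}$ as a sum of $\bu_\ell\bu_\ell^\top\bH^\top\bGamma^{-1}\bH$ terms plus $\bS$), which is reused in the proof of \Cref{thm:state_misfit_convergence} to extract the exponential convergence rate; with your argument, that representation would still have to be derived separately there.
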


The spectral projectors $\bP$ and $\bS$ introduced in \Cref{prop:bbM_spectral_projectors} define two complementary subspaces of $\R^d$, $\textsf{ran}(\bP)$ and $\textsf{ran}(\bS)$. Note that $\textsf{ran}(\bP)=\textsf{ran}(\bU)$ is the span of the eigenvectors of~\eqref{eq:statespace_GEV_alt} associated with nonzero eigenvalues; these directions lie in $\textsf{ran}(\bCTilde_i)$ and also must not lie within $\textsf{ker}(\bH)$. The subspace $\textsf{ran}(\bP)$ may thus be interpreted as being `populated' by the ensemble while being observable under $\bH$. 
\Cref{prop:bbM_spectral_projectors} allows the mean field residual $\bomegaTilde_{i+1}^{(j)}$ \eqref{eq:idealized_state_misfit} to be decomposed into two components, one lying in the observable and populated space $\textsf{ran}(\bP)$, and the other in the complementary subspace $\textsf{ran}(\bS)$:
    \begin{align*}
        \bomegaTilde_{i+1}^{(j)} = \bP\bMTilde_{i:0}\bomega_0^{(j)} + \bS\bMTilde_{i:0}\bomega_0^{(j)}=\bMTilde_{i:0}\bP\bomega_0^{(j)} + \bMTilde_{i:0}\bS\bomega_0^{(j)}.
    \end{align*}

Our next result shows that the residual component in $\textsf{ran}(\bP)$ converges to zero, while the component in $\textsf{ran}(\bS)$ remains constant for all iterations. 
We note that the analysis of basic ensemble Kalman inversion in~\cite{qian_fundamental_2024} further divides the non-convergent subspace $\textsf{ran}(\bS)$ into two subspaces associated with (i) observable but populated directions and (ii) unobservable directions. While such a division is also possible in our case, we choose to present results for just the convergent space $\textsf{ran}(\bP)$ and the entire non-convergent space $\textsf{ran}(\bS)$ to simplify the presentation.

\begin{theorem}\label{thm:state_misfit_convergence}
        Let $\gamma=\min_{1\leq \ell\leq r} \gamma_\ell$, where $\gamma_\ell$ is defined as in \Cref{prop:limit}. For all particles $j = 1,\ldots, J$, the following hold for all $i\geq 0$:
        \begin{enumerate}[(a)]
            \item $\bS\bomegaTilde_{i+1}^{(j)} = \bS\bomega_0^{(j)}$
            \item $\bP\bomegaTilde_{i+1}^{(j)} \leq e^{-(i-1)\gamma}\bP\bomega_0^{(j)}$.
        \end{enumerate}
    \end{theorem}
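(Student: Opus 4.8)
The plan is to handle the two parts separately, in both cases starting from the mean-field residual identity $\bomegaTilde_{i+1}^{(j)} = \bMTilde_{i:0}\bomega_0^{(j)}$ from~\eqref{eq:idealized_state_misfit} and exploiting that, by \Cref{prop:bbM_spectral_projectors}, the projectors $\bP$ and $\bS$ commute with $\bMTilde_{i:0}$.

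For part (a), I would write $\bS\bomegaTilde_{i+1}^{(j)} = \bS\bMTilde_{i:0}\bomega_0^{(j)} = \bMTilde_{i:0}\,\bS\bomega_0^{(j)}$ by commutativity, and then argue that $\bMTilde_{i:0}$ acts as the identity on $\range(\bS)$. The key observation is that $\range(\bS)$ is exactly the null space of $\bCTilde_k\bH^\top\bGamma^{-1}\bH$ (the span of the zero-eigenvalue eigenvectors of~\eqref{eq:statespace_GEV_alt}), which is constant in $k$ because zero eigenvalues remain zero and the eigenvectors are fixed (\Cref{prop:GEV_statespace_alt}). Hence for each $k$ every vector in $\range(\bS)$ is fixed by $\bMTilde_k = (\bI+\bCTilde_k\bH^\top\bGamma^{-1}\bH)^{-1}$, so $\bMTilde_{i:0}\bS\bomega_0^{(j)} = \bS\bomega_0^{(j)}$, which is the claim.

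For part (b), I would again use commutativity to write $\bP\bomegaTilde_{i+1}^{(j)} = \bMTilde_{i:0}\bP\bomega_0^{(j)}$, expand $\bP\bomega_0^{(j)} = \sum_{\ell=1}^r c_\ell \bu_\ell$ in the eigenbasis spanning $\range(\bP)=\range(\bU)$, and use $\bMTilde_k\bu_\ell = (1+\lambda_{\ell,k})^{-1}\bu_\ell$ to obtain $\bMTilde_{i:0}\bu_\ell = \pi_{\ell,i}\bu_\ell$ with $\pi_{\ell,i} = \prod_{k=0}^i(1+\lambda_{\ell,k})^{-1}$. Since $\bU^\top\bH^\top\bGamma^{-1}\bH\bU = \bI_r$, the $\bu_\ell$ are orthonormal in the $\bH^\top\bGamma^{-1}\bH$-weighted inner product, so (interpreting the stated inequality as a bound in that norm, in which $\bMTilde_{i:0}$ is self-adjoint on $\range(\bP)$ with eigenvalues $\pi_{\ell,i}\in(0,1]$) the claim reduces to showing $\pi_{\ell,i}\le e^{-(i-1)\gamma}$ for every $\ell$.

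Bounding the product $\pi_{\ell,i}$ is the crux and the main obstacle, because \Cref{prop:limit} controls $1-\lambda_{\ell,i+1}$ rather than $\pi_{\ell,i}$ directly. The cleanest route I see is a direct log-sum estimate: evaluating~\eqref{eq:idealized_eigen_evolution} at $i=0$ gives $\lambda_{\ell,1} = \lambda_{\ell,0}(1+\lambda_{\ell,0})^{-1}$, and the elementary inequality $\log(1+x)\ge x(1+x)^{-1}$ then yields $\log(1+\lambda_{\ell,1}) \ge \lambda_{\ell,0}(1+2\lambda_{\ell,0})^{-1} = \gamma_\ell \ge \gamma$; combining this with the monotonicity of the nonzero eigenvalues for $k\ge 1$ (so $\log(1+\lambda_{\ell,k})\ge\gamma$ for all $k\ge 1$) and summing gives $-\log\pi_{\ell,i} = \sum_{k=0}^i\log(1+\lambda_{\ell,k}) \ge i\gamma$, hence $\pi_{\ell,i}\le e^{-i\gamma}\le e^{-(i-1)\gamma}$. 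An alternative that matches the $(i-1)$ exponent more literally is to first show $\pi_{\ell,i}\le 1-\lambda_{\ell,i+1}$ (which follows from $\pi_{\ell,i}\le(1+\lambda_{\ell,0})^{-1}$, so that $2-(1+\lambda_{\ell,0})\pi_{\ell,i}\ge 1$ in the factorization $1-\lambda_{\ell,i+1} = \pi_{\ell,i}\big(2-(1+\lambda_{\ell,0})\pi_{\ell,i}\big)$ read off from~\eqref{eq:idealized_eigen_evolution}) and then invoke \Cref{prop:limit}; this introduces a harmless constant $2(1+\lambda_{\ell,0})^{-1}$ that the direct route avoids. Either way, ranging over $\ell=1,\ldots,r$ and using $\gamma=\min_\ell\gamma_\ell$ gives the uniform decay $\max_\ell\pi_{\ell,i}\le e^{-(i-1)\gamma}$, which yields part (b).
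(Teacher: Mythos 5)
Your proposal is correct, and its skeleton --- commuting $\bP$, $\bS$ past $\bMTilde_{i:0}$, using the decomposition $\bMTilde_{i:0} = \sum_{\ell=1}^r \pi_{\ell,i}\,\bu_\ell\bu_\ell^\top\bH^\top\bGamma^{-1}\bH + \bS$ with $\pi_{\ell,i}=\prod_{k=0}^i(1+\lambda_{\ell,k})^{-1}$, and reducing part (b) to the scalar bound on $\pi_{\ell,i}$ --- is exactly the paper's. The differences are in the two justification steps. For (a), the paper reads $\bMTilde_{i:0}\bS=\bS$ directly off that decomposition (the state-space analogue of \Cref{lem:Mcal_compound_spectral}), whereas you argue that each $\bMTilde_k$ fixes $\range(\bS)$ because $\range(\bS)=\kernel\big(\bCTilde_k\bH^\top\bGamma^{-1}\bH\big)$; this is equivalent, though it quietly uses the dimension count $\dim\range(\bS)=d-r$, i.e., diagonalizability of $\bCTilde_k\bH^\top\bGamma^{-1}\bH$, which is implicit in the paper's setup since the eigenproblem \eqref{eq:statespace_GEV_alt} is posed for $\ell=1,\ldots,d$. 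For (b), the paper does not re-derive the product bound: it cites \eqref{eq:product_bound}, an intermediate inequality inside the proof of \Cref{prop:limit}, which already gives $\prod_{k=0}^i(1+\lambda_{\ell,k})^{-1}\le e^{-(i-1)\gamma_\ell}$. You correctly identified that the \emph{statement} of \Cref{prop:limit} (which controls $1-\lambda_{\ell,i+1}$) is not quite the needed quantity, and your log-sum estimate $\log(1+\lambda_{\ell,k})\ge\log(1+\lambda_{\ell,1})\ge\gamma_\ell$ for $k\ge1$ is a clean, self-contained replacement that in fact yields the slightly sharper bound $\pi_{\ell,i}\le e^{-i\gamma}$; your fallback via $\pi_{\ell,i}\le 1-\lambda_{\ell,i+1}$ is also sound but, as you note, only recovers the rate up to the factor $2(1+\lambda_{\ell,0})^{-1}$, so the direct route is the right choice. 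Finally, your reading of the vector inequality in (b) as a bound in the $\bH^\top\bGamma^{-1}\bH$-weighted norm on $\range(\bP)$, where $\bMTilde_{i:0}$ is self-adjoint with eigenvalues $\pi_{\ell,i}\in(0,1]$, is more careful than the paper, whose proof manipulates the inequality formally term by term; your interpretation is the rigorous way to state what both arguments establish.
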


\Cref{thm:state_misfit_convergence} establishes the existence of a limit for $\bomegaTilde_{i+1}^{(j)}$, and exponential convergence. The next result identifies the limit for each particle and, consequently, the limit for the ensemble mean and covariance.

\begin{theorem}\label{thm:master_convergence}
In the mean field limit, as $i\to\infty$, for all particles $j=1,\ldots, J$, it holds that
\begin{enumerate}[(a)]
    \item $\bvTilde_{i+1}^{(j)}\to\bvTilde_\infty^{(j)} = \bP\bv_\star^{(j)}  + \bS\bv_0^{(j)}$,
    \item $\E\big[\bvTilde_{i+1}^{(j)}\big]\to\E\big[\bvTilde_\infty^{(j)}\big] = \bP\bv^\star +\bS\bmu_0$,
    \item $\Cov\big[\bvTilde_{i+1}^{(j)}\big]\to\Cov\big[\bvTilde_\infty^{(j)}\big] = \bP \big(\bH^\top\bGamma^{-1}\bH\big)^\dagger\bP^\top + \bS\bCTilde_0\bS^\top$.
\end{enumerate}
    
\end{theorem}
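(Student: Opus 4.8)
The plan is to derive all three parts from the residual decomposition in \Cref{thm:state_misfit_convergence} together with the independence of the data perturbations $\beps^{(j)}$ from the initial ensemble. Since $\bMTilde_{i:0}$ is deterministic, the mean-field particle \eqref{eq:mean_field} converges pathwise (for each realization of $\bv_0^{(j)}$ and $\beps^{(j)}$), so I would establish (a) at the level of the residual and then push the limit through expectation and covariance. For (a), write $\bvTilde_{i+1}^{(j)} = \bv_\star^{(j)} + \bomegaTilde_{i+1}^{(j)}$ and split the residual using the complementary projectors of \Cref{prop:bbM_spectral_projectors}, $\bomegaTilde_{i+1}^{(j)} = \bP\bomegaTilde_{i+1}^{(j)} + \bS\bomegaTilde_{i+1}^{(j)}$. \Cref{thm:state_misfit_convergence}(b) drives the $\bP$-component to $\bzero$ exponentially, while \Cref{thm:state_misfit_convergence}(a) freezes the $\bS$-component at $\bS\bomega_0^{(j)} = \bS(\bv_0^{(j)} - \bv_\star^{(j)})$. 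Passing to the limit and using $\bP + \bS = \bI_d$ gives $\bvTilde_\infty^{(j)} = \bv_\star^{(j)} + \bS(\bv_0^{(j)} - \bv_\star^{(j)}) = \bP\bv_\star^{(j)} + \bS\bv_0^{(j)}$, which is (a).

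Part (b) follows by taking expectations in (a). As $\bP$ and $\bS$ are deterministic, $\E[\bvTilde_\infty^{(j)}] = \bP\,\E[\bv_\star^{(j)}] + \bS\,\E[\bv_0^{(j)}]$. By assumption $\E[\bv_0^{(j)}] = \bmu_0$, and since $\bv_\star^{(j)} = \bH^+(\by + \beps^{(j)})$ with $\E[\beps^{(j)}] = \bzero$, linearity yields $\E[\bv_\star^{(j)}] = \bH^+\by = \bv^\star$. Substituting gives $\E[\bvTilde_\infty^{(j)}] = \bP\bv^\star + \bS\bmu_0$.

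Part (c) is the most delicate, as it asserts both a formula and that the covariances along the iteration converge to it. I would first record the two facts underlying \eqref{eq:idealized_cov}: independence makes $\bv_\star^{(j)} = \bH^+\by^{(j)}$ independent of $\bv_0^{(j)}$, so the cross terms vanish, and $\Cov[\bv_\star^{(j)}] = \bH^+\bGamma(\bH^+)^\top$ reduces to $(\bH^\top\bGamma^{-1}\bH)^\dagger$ upon substituting $\bH^+ = (\bH^\top\bGamma^{-1}\bH)^\dagger\bH^\top\bGamma^{-1}$ and invoking the Moore--Penrose identity $\mathbf{G}^\dagger\mathbf{G}\mathbf{G}^\dagger = \mathbf{G}^\dagger$ with $\mathbf{G} = \bH^\top\bGamma^{-1}\bH$. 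With \eqref{eq:idealized_cov} available as an exact finite-$i$ identity, the convergence of $\Cov[\bvTilde_{i+1}^{(j)}]$ reduces to the operator convergence $\bMTilde_{i:0}\to\bS$ and $\bI_d - \bMTilde_{i:0}\to\bP$. This operator convergence is the main obstacle, but it is already contained in \Cref{thm:state_misfit_convergence}: part (a) gives $\bMTilde_{i:0}\bS = \bS$, and part (b), at the exponential rate of \Cref{prop:limit} through the product $\prod_{k=0}^i(1+\lambda_{\ell,k})^{-1}\to 0$ that governs $\bMTilde_{i:0}$ on $\range(\bU)$, gives $\bMTilde_{i:0}\bP\to\bzero$; summing across $\bP + \bS = \bI_d$ yields $\bMTilde_{i:0}\to\bS$. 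Passing to the limit in \eqref{eq:idealized_cov} by continuity of matrix products then gives $\bS\bCTilde_0\bS^\top + \bP(\bH^\top\bGamma^{-1}\bH)^\dagger\bP^\top$, which is exactly the covariance of the part-(a) limit $\bvTilde_\infty^{(j)}$ computed directly; this agreement certifies the interchange of limit and covariance and completes (c).
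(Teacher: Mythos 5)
Your proposal is correct and follows essentially the same route as the paper: part (a) from the residual decomposition of \Cref{thm:state_misfit_convergence}, the operator limits $\bMTilde_{i:0}\to\bS$ and $\bI-\bMTilde_{i:0}\to\bP$ (the state-space analog of \Cref{lem:limit_of_calM}, which you re-derive from the spectral product), and parts (b) and (c) by passing these limits through \eqref{eq:mean_field} and \eqref{eq:idealized_cov}. Your explicit verification that $\Cov\big[\bv_\star^{(j)}\big]=\big(\bH^\top\bGamma^{-1}\bH\big)^\dagger$ and the consistency check against the covariance of the part-(a) limit are details the paper leaves implicit, but they do not change the argument.
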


\begin{corollary}\label{cor:convergence_in_distribution}
    \Cref{thm:master_convergence} establishes that our method performs randomized maximum likelihood estimation in the $\bP$ subspace. Namely, we have that $\bP\bvTilde_{i+1}^{(j)}\to\bP\bvTilde_\infty^{(j)} = \bP\bv_\star^{(j)}$, i.e., mean field particles become randomized maximum likelihood estimators in the $\bP$ subspace, and we also have convergence in distribution of $\bP\bvTilde_{i+1}^{(j)}$ to $\mathcal{N}\big(\bP\bv^\star, \bP\big(\bH^\top\bGamma^{-1}\bH\big)^\dagger\bP^\top\big)$, i.e., we converge to the projection of the RMLE distribution under $\bP$.
\end{corollary}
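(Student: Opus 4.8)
The plan is to read off both claims directly from Theorem~\ref{thm:master_convergence}, combined with the projector algebra of Proposition~\ref{prop:bbM_spectral_projectors}, since the corollary is essentially the theorem's conclusions restricted to the observable-and-populated subspace $\textsf{ran}(\bP)$. For the first claim I would apply $\bP$ to the limit in Theorem~\ref{thm:master_convergence}(a). Using $\bP^2=\bP$ and $\bP\bS=\bzero$, the term $\bP\bS\bv_0^{(j)}$ vanishes and $\bP\bP\bv_\star^{(j)}=\bP\bv_\star^{(j)}$, so $\bP\bvTilde_{i+1}^{(j)}\to\bP\bv_\star^{(j)}$. Since $\bv_\star^{(j)}=\bH^+\by^{(j)}$ is by definition the minimum-norm solution of the perturbed least-squares problem~\eqref{eq:EKIRMLE_min}, this identifies the $\bP$-component of the limiting particle as the projection of a randomized maximum likelihood estimator.

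For the distributional claim I would first make the randomness explicit. Writing $\by^{(j)}=\by+\beps^{(j)}$ and using linearity of $\bH^+$, the limit decomposes as $\bP\bv_\star^{(j)}=\bP\bH^+\by+\bP\bH^+\beps^{(j)}=\bP\bv^\star+\bP\bH^+\beps^{(j)}$, where $\bv^\star=\bH^+\by$ is the unperturbed minimum-norm least-squares solution. Because $\beps^{(j)}\sim\mathcal{N}(\bzero,\bGamma)$ and $\bP\bH^+$ is a fixed linear map, $\bP\bv_\star^{(j)}$ is \emph{exactly} Gaussian with mean $\bP\bv^\star$ and covariance $\bP\bH^+\bGamma(\bH^+)^\top\bP^\top$. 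The key computation is to simplify this covariance: substituting $\bH^+=(\bH^\top\bGamma^{-1}\bH)^\dagger\bH^\top\bGamma^{-1}$, the two factors of $\bGamma^{-1}$ from $\bH^+$ and $(\bH^+)^\top$ absorb the intervening $\bGamma$, leaving $(\bH^\top\bGamma^{-1}\bH)^\dagger(\bH^\top\bGamma^{-1}\bH)(\bH^\top\bGamma^{-1}\bH)^\dagger$, which collapses to $(\bH^\top\bGamma^{-1}\bH)^\dagger$ by the Moore--Penrose identity $A^\dagger A A^\dagger=A^\dagger$. Hence the covariance is exactly $\bP(\bH^\top\bGamma^{-1}\bH)^\dagger\bP^\top$, matching the target distribution.

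It remains to convert the pathwise convergence into convergence in distribution. In the mean-field setting $\bMTilde_{i:0}$ is a deterministic matrix sequence, and Theorem~\ref{thm:state_misfit_convergence} gives $\bP\bMTilde_{i:0}\to\bzero$ together with $\bS\bMTilde_{i:0}=\bS$, so that $\bMTilde_{i:0}\to\bS$ and $\bI-\bMTilde_{i:0}\to\bP$. Via the representation~\eqref{eq:mean_field}, $\bP\bvTilde_{i+1}^{(j)}=\bP\bMTilde_{i:0}\bv_0^{(j)}+\bP(\bI-\bMTilde_{i:0})\bv_\star^{(j)}$ converges to $\bP\bv_\star^{(j)}$ for every realization of $(\bv_0^{(j)},\beps^{(j)})$, i.e., almost surely. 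Since almost-sure convergence implies convergence in distribution and the almost-sure limit $\bP\bv_\star^{(j)}$ was just shown to be $\mathcal{N}\big(\bP\bv^\star,\bP(\bH^\top\bGamma^{-1}\bH)^\dagger\bP^\top\big)$, the claimed convergence in distribution follows. I do not expect a genuine obstacle: the whole corollary is a projection of Theorem~\ref{thm:master_convergence}, and the only step requiring real care is the covariance simplification, which hinges entirely on the pseudoinverse identity and on the cancellation of the $\bGamma$-weighting in $\bH^+$ against the $\bGamma$ covariance of $\beps^{(j)}$.
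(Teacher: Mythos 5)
Your proof is correct and takes essentially the same approach as the paper: the paper likewise writes $\bP\bvTilde_\infty^{(j)} = \bP\bv^\star + \bP\bH^+\beps^{(j)}$ (factoring $\beps^{(j)} = \bGamma^{\frac{1}{2}}\bzeta^{(j)}$) and identifies the limit as exactly Gaussian with covariance $\big(\bP\bH^+\bGamma^{\frac{1}{2}}\big)\big(\bP\bH^+\bGamma^{\frac{1}{2}}\big)^\top = \bP\big(\bH^\top\bGamma^{-1}\bH\big)^\dagger\bP^\top$, which is precisely your pseudoinverse cancellation. Your added remarks making explicit the almost-sure convergence (via the deterministic limits $\bMTilde_{i:0}\to\bS$, $\bI-\bMTilde_{i:0}\to\bP$) and the implication to convergence in distribution are details the paper leaves implicit, not a different argument.
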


The results of \Cref{thm:master_convergence,cor:convergence_in_distribution} show exponential convergence (guaranteed by \Cref{thm:state_misfit_convergence}) of mean field particles to randomized maximum likelihood estimators distributed according to the RMLE distribution, in the $\bP$ subspace. In particular, these results contrast with the behavior of stochastic and deterministic versions of basic EKI (\Cref{alg:EKI}), where \textit{all} particles exhibit ensemble collapse to \textit{same} limit in the $\bP$ subspace \cite{qian_fundamental_2024} at a slow $1/\sqrt{i}$ rate of convergence. In the Bayesian inference setting, we define a suitably regularized least-squares problem~\eqref{eq:TEKI_min}, and apply our method (\Cref{alg:our_method}) with $\by_\text{RLS}, \bH_\text{RLS}$ and $\bGamma_\text{RLS}$ to solve the Tikhonov-regularized optimization \eqref{eq:TEKI_min}. Application of \Cref{thm:master_convergence} to this regularized problem shows that the proposed method yields samples from the linear Gaussian Bayesian posterior:

\begin{corollary}\label{cor:application_to_BIP}
        In the mean field limit, as $i\to\infty$, for all particles $j=1,\ldots, J$, it holds that
    \begin{enumerate}[(a)]
        \item $\E\big[\bP\bvTilde_{i+1}^{(j)}\big]\to \bP\mupos$,
        \item $\Cov\big[\bP\bvTilde_{i+1}^{(j)}\big]\to\bP\Gpos\bP^\top$.
    \end{enumerate}
\end{corollary}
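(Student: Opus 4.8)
The plan is to obtain this corollary as a direct specialization of \Cref{thm:master_convergence} to the regularized least-squares operators~\eqref{eq:TEKI_notation}, followed by an application of the projector $\bP$ together with the algebraic identities from \Cref{prop:bbM_spectral_projectors}. Concretely, running \Cref{alg:our_method} with $\by_\text{RLS},\bH_\text{RLS},\bGamma_\text{RLS}$ means that every quantity appearing in \Cref{thm:master_convergence} should be read with the RLS operators substituted in. The two ingredients I would need to identify are the RLS analogues of $\bv^\star$ and of $(\bH^\top\bGamma^{-1}\bH)^\dagger$, after which the result follows by pre-multiplying the limits of $\E[\bvTilde_{i+1}^{(j)}]$ and $\Cov[\bvTilde_{i+1}^{(j)}]$ by $\bP$ (and $\bP^\top$) and collapsing terms.

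First I would record the two identifications already established in \Cref{subsec:ProblemForumlation}: the minimum-norm solution of the RLS problem is $\bv^\star_\text{RLS}=\bH^+_\text{RLS}\by_\text{RLS}=\mupos$, and the RLS Hessian is $\bH_\text{RLS}^\top\bGamma_\text{RLS}^{-1}\bH_\text{RLS}=\Gpos^{-1}$. Since $\Gpr$ is positive definite, $\Gpos^{-1}=\bH^\top\bGamma^{-1}\bH+\Gpr^{-1}$ is positive definite, hence invertible, so its Moore--Penrose pseudoinverse coincides with its inverse, $\big(\bH_\text{RLS}^\top\bGamma_\text{RLS}^{-1}\bH_\text{RLS}\big)^\dagger=\Gpos$. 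Substituting these into parts (b) and (c) of \Cref{thm:master_convergence} (now stated for the RLS operators) gives the mean-field limits
\begin{align*}
    \E\big[\bvTilde_{i+1}^{(j)}\big]\to \bP\mupos + \bS\bmu_0, \qquad
    \Cov\big[\bvTilde_{i+1}^{(j)}\big]\to \bP\Gpos\bP^\top + \bS\bCTilde_0\bS^\top.
\end{align*}
Next I would use that $\bP$ is a fixed (deterministic) linear map, so that $\E[\bP\bvTilde_{i+1}^{(j)}]=\bP\,\E[\bvTilde_{i+1}^{(j)}]$ and $\Cov[\bP\bvTilde_{i+1}^{(j)}]=\bP\,\Cov[\bvTilde_{i+1}^{(j)}]\,\bP^\top$. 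Finally I would invoke the projector identities $\bP^2=\bP$ and $\bP\bS=\bzero$ from \Cref{prop:bbM_spectral_projectors}: for (a), $\bP(\bP\mupos+\bS\bmu_0)=\bP\mupos$, and for (b), $\bP\big(\bP\Gpos\bP^\top+\bS\bCTilde_0\bS^\top\big)\bP^\top=\bP\Gpos\bP^\top$, where the second term vanishes because $\bP\bS=\bzero$ and hence $\bS^\top\bP^\top=(\bP\bS)^\top=\bzero$, while $\bP\bP=\bP$ and $\bP^\top\bP^\top=(\bP\bP)^\top=\bP^\top$.

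Since this is a corollary, I do not anticipate a substantive obstacle; the only points requiring care are the reduction of the pseudoinverse to a genuine inverse (which hinges precisely on positive-definiteness of the regularized Hessian, i.e.\ on $\Gpr^{-1}\succ\bzero$) and the transpose bookkeeping in the covariance collapse, where $\bP$ is idempotent but not symmetric so one must track $\bP^\top$ separately rather than assume self-adjointness. I would also note in passing, for interpretive clarity, that with the RLS operators $\bH_\text{RLS}\bv=[\bH\bv;\,\bv]$ has trivial kernel, so every direction is observable under $\bH_\text{RLS}$; consequently the non-convergent component captured by $\bS$ arises solely from under-population of $\textsf{ran}(\bCTilde_0)$ by the ensemble, and when the initial ensemble covariance is full rank one has $\bP=\bI_d$, recovering the exact posterior limits $\mupos$ and $\Gpos$.
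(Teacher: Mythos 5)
Your proposal is correct and takes essentially the same route as the paper: the paper gives no separate proof of this corollary, treating it exactly as you do---a direct application of \Cref{thm:master_convergence} to the regularized least-squares operators~\eqref{eq:TEKI_notation}, combined with the identifications $\bv^\star_\text{RLS}=\mupos$ and $\bH_\text{RLS}^\top\bGamma_\text{RLS}^{-1}\bH_\text{RLS}=\Gpos^{-1}$ already recorded in \Cref{subsec:ProblemForumlation}, followed by the projector algebra of \Cref{prop:bbM_spectral_projectors}. Your extra care on the two delicate points (the pseudoinverse collapsing to a true inverse because $\Gpr^{-1}\succ\bzero$ makes the regularized Hessian invertible, and tracking $\bP^\top$ separately since $\bP$ is not symmetric) simply makes explicit what the paper leaves implicit.
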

\Cref{cor:application_to_BIP} establishes that $\bP\bvTilde_{i+1}^{(j)}$ converges in distribution to $\mathcal{N}\big(\bP\mupos,\bP\Gpos\bP^\top\big)$, which is the projection of exact Gaussian posterior of \cref{eq:Gaussian_posterior} onto the $\bP$ subspace. This means that, in the $\bP$ subspace, mean field particles become samples from the Gaussian posterior. We emphasize that results are derived for the mean-field limit of the iteration: we will demonstrate in \Cref{sec:Numerics} that if the ensemble size is insufficiently large, the true algorithmic iterations will lie far from their mean field counterparts.

\begin{remark}
While the main results presented in this section concern the state space residual, similar results can be shown regarding the behavior of the observation space data misfit, $\btheta_i^{(j)} = \bH\bv_i^{(j)} - \by^{(j)}$. This analysis can be found in \Cref{sec:LinearAnalysis}.
\end{remark}

\section{Model reduction for ensemble Kalman RMLE for the linear Bayesian smoothing problem}\label{sec:application_with_BTBI}

We introduce in \Cref{subsec:LTI_IP} the \textit{smoothing problem}, a special case of the inverse problem~\eqref{eq:inverse_problem} describing the inference of an unknown initial condition of a linear dynamical system from noisy observations taken after the initial time. Ensemble methods for solving such inverse problems can be computationally expensive because the dynamical system must be evolved for each particle at every iteration. To reduce the computational cost in this setting, we describe in \Cref{subsec:BTBI} a model reduction approach from~\cite{qian_model_2022} tailored to this smoothing problem. \Cref{subsec:BTEKI_result} proposes a strategy for reducing the computational cost of our proposed ensemble Kalman RMLE approach using this model reduction approach, and describes approximation guarantees for this strategy, building on our results in \Cref{subsec:idealized}.

\subsection{Inverse problem formulation}\label{subsec:LTI_IP}

We consider a linear dynamical system with linear output:
    \begin{align}
    \begin{split}
        \dot{\bx}(t) &= \bA \bx(t),\\
        \bcaly(t) &= \bF \bx(t),
    \end{split}
    \label{eq:LTI_system}
    \end{align}
    where $\bx(t)\in \R^d$, $\bA\in\R^{d\times d}$, $\bF\in\R^{d_{\text{out}}\times d}$, and the initial state $\bx(0)=\bv$ is unknown. We take noisy measurements, $\by_k$, of the system output $\bcaly$ at positive times $t_1<t_2<\cdots <t_m$ according to the following measurement model:
    \begin{align}
        \by_k \equiv \bF\bx(t_k) + \bheta_k = \bF e^{\bA t_k}\bv + \bheta_k,
        \label{eq:LTI_measurement}
    \end{align}
    where $\bheta_k\sim\mathcal{N}(\bzero,\bGamma_\eta)$, $\bGamma_\eta\in \R^{d_\text{out}\times d_\text{out}}$. Our goal is to infer the unknown initial state $\bv$ of the system~\eqref{eq:LTI_system} from the noisy measurements~\eqref{eq:LTI_measurement}. This defines a linear measurement model of the form~\eqref{eq:inverse_problem} where:
    \begin{align}
        \by=
        \begin{bmatrix}
            \by_1 \\\vdots\\ \by_m
        \end{bmatrix}\in \R^{n}, &&
        \bH=
            \begin{bmatrix}
                \bF e^{\bA t_1} \\ \vdots \\ \bF e^{\bA t_m}
            \end{bmatrix} \in \R^{n\times d}, &&
            \bGamma =
            \begin{bmatrix}
                \bGamma_\eta & &\\
                & \ddots & \\
                & & \bGamma_\eta
            \end{bmatrix} \in \R^{n\times n},
            \label{eq:LTI_operators}
    \end{align}
    with $n= m\cdot d_\text{out}$. We endow $\bv$ with the centered Gaussian prior $\mathcal{N}(\bzero,\Gpr)$. Then, the posterior mean and covariance are given by \cref{eq:Gaussian_posterior} (with $\mupr=\bzero$), whose mean is the minimizer of the Tikhonov-regularized optimization given by \cref{eq:TEKI_min,eq:TEKI_notation}.

In many application settings, solving Bayesian smoothing problems of this form is computationally challenging because $\bH$ is only implicitly available through simulating a high-dimensional linear system. In our proposed ensemble Kalman RMLE approach, the system must be evolved once per particle per iteration, which may be  computationally expensive. To combat this, we next introduce a balanced truncation (BT) model reduction approach from~\cite{qian_model_2022} that is tailored to this smoothing problem.

\subsection{Model reduction via balanced truncation for Bayesian inference}\label{subsec:BTBI}

Balanced truncation \cite{gugercin_survey_2004,moore_principal_1981} is a system-theoretic method for projection-based model reduction that was adapted to linear Bayesian smoothing problems in \cite{qian_model_2022}. The key idea is to project the high-dimensional dynamical system~\eqref{eq:LTI_system} onto state directions that maximize the following Rayleigh quotient:
\begin{align}
    \frac{\bz^\top \bQ\bz}{\bz^\top \Gpr^{-1}\bz},
    \label{eq:Rayleigh}
\end{align}
where $\bQ$ is defined as:
\begin{align}
    \bQ \equiv \int_0^\infty e^{\bA^\top t}\bF^\top \bGamma_\eta^{-1}\bF e^{\bA t}\,\mathrm{d}t\in \R^{d\times d},
    \label{eq:BTBI_Gramians}
\end{align}
so that $\bz^\top\bQ\bz$ can be interpreted as an energy based on integrating, over all positive time, the squared Mahalanobis distance between the equilibrium of the dynamical system and noisy outputs. Directions that maximize \eqref{eq:Rayleigh} are given by the leading eigenvectors of the following generalized eigenvalue problem:
\begin{align*}
    \bQ\bz_\ell = \delta_\ell\Gpr^{-1}\bz_\ell,
\end{align*}
and can be computed as follows. Let $\bGamma_\text{pr} = \bR\bR^\top, \bQ = \bL\bL^\top$ (e.g., via Cholesky factorization) and let $\bL^\top\bR = \bPhi\bXi\bPsi^\top$ be the singular value decomposition (SVD) of $\bL^\top\bR$. Let $\bcalU = \bR\bPsi\bXi^{-\frac{1}{2}}$ and note that $\bcalU$ is invertible with $\bcalU^{-1} = \bXi^{-\frac{1}{2}}\bPhi^\top\bL^\top$. Generalized eigenvectors $\bz_\ell$ of $\big(\bQ,\Gpr^{-1}\big)$ are columns of $\bR\bPsi$ and become columns of $\bcalU$ under the renormalization $\bXi^{-\frac{1}{2}}$. Denote by $\bPhi_\rho,\bPsi_\rho$ the first $\rho$ columns of $\bPhi$ and $\bPsi$ respectively, and let $\bXi_\rho$ be the upper $\rho\times \rho$ block of $\bXi$. Then, define
    \begin{align}
       \bcalV_\rho^\top = \bXi_\rho^{-\frac{1}{2}}\bPhi_\rho^\top\bL^\top, && \bcalU_\rho = \bR \bPsi_\rho\bXi_\rho^{-\frac{1}{2}},
        \label{eq:BTBI_reduced_bases}
    \end{align}
    where $\bcalV_\rho^\top\bcalU_\rho = \bI_\rho$. The order-$\rho$ balanced truncation approximation of~\eqref{eq:LTI_system} is a Petrov-Galerkin projection given by restricting the state $\bx$ to lie in $\range(\bcalU_\rho)$ and by enforcing the system~\eqref{eq:LTI_system} on $\kernel\big(\bcalV_\rho^\top\big)$. This results in the reduced-order approximation of \eqref{eq:LTI_system},
    \begin{align}
    \begin{split}
        \dot{\bhatx}(t) &= \bhatA \bhatx(t),\\
        \bcaly(t) &= \bhatF \bhatx(t),
    \end{split}
    \label{eq:reduced_LTI_system}
    \end{align}
    with reduced operators in~\eqref{eq:reduced_LTI_system} given by:
    \begin{align}
        \bhatA =\bcalV_\rho^\top \bA\bcalU_\rho \in \R^{\rho\times \rho}, && \bhatF = \bF\bcalU_\rho\in\R^{d_\text{out}\times \rho}.
        \label{eq:BTBI_reduced_operators}
    \end{align}
    The reduced operators~\eqref{eq:BTBI_reduced_operators} can be used to cheaply integrate~\eqref{eq:reduced_LTI_system}, which is now $\rho$-dimensional, thus yielding efficient approximate numerical simulations of~\eqref{eq:LTI_system}. The reduced operators in~\eqref{eq:BTBI_reduced_operators} also implicitly define reduced forward maps
    \begin{align}
        \bH_\text{BT} \equiv \begin{bmatrix}
            \bhatF e^{\bhatA t_1}\\
            \vdots \\ \bhatF e^{\bhatA t_m}
        \end{bmatrix}\bcalV_\rho^\top, && 
        \bH_\text{RLS,BT} = 
        \begin{bmatrix}
            \bH_\text{BT} \\ \bI
        \end{bmatrix},
        \label{eq:G_BT}
    \end{align}
    and corresponding posterior mean and covariance approximations,
    \begin{align}
        \GposBT = \big(\bH_\text{BT}^\top \bGamma^{-1}\bH_\text{BT} + \bGamma_\text{pr}^{-1}\big)^{-1}, && \muposBT = \GposBT\bH_\text{BT}\bGamma^{-1}\by.
        \label{eq:BT_posterior}
    \end{align}
The work in~\cite{qian_model_2022} shows that if $\Gpr$ satisfies $\bA\Gpr + \Gpr\bA^\top\preceq\bzero$, then $\Gpr$ can be interpreted as the result of spinning up the system~\eqref{eq:LTI_system} from $t=-\infty$ with a white noise input. This interpretation coincides with the system-theoretic definition of a reachability Gramian~\cite{antoulas_approximation_2005} and allows the reduced system to inherit system-theoretic error bounds and stability guarantees~\cite{qian_model_2022}. Furthermore, the work~\cite{qian_model_2022} shows that $\GposBT$ converges in certain continuous-time limits of the observation process to an optimal posterior covariance approximation for linear Gaussian Bayesian inverse problems defined in~\cite{spantini_optimal_2015}.

\subsection{Near-optimality of BT-accelerated ensemble Kalman RMLE}\label{subsec:BTEKI_result}
For linear Bayesian smoothing problems of the form described in \Cref{subsec:LTI_IP}, we propose a strategy to reduce the computational cost of our proposed ensemble Kalman RMLE algorithm by approximating the original high-dimensional system~\eqref{eq:LTI_system}, which is expensive to integrate, with the low-dimensional reduced system~\eqref{eq:reduced_LTI_system} which may be simulated more cheaply. 
We now provide a convergence result for this proposed acceleration strategy. The following corollary follows from \Cref{thm:master_convergence} when applied to the least-squares problem defined by the approximate forward operator $\bH_\text{RLS,BT}$ in~\eqref{eq:G_BT}.  We emphasize that the forward operator $\bH_\text{RLS,BT}$ would not generally be explicitly formed as part of the computation; instead, the forward operator evaluation would rely on integrating the low-dimensional reduced system~\eqref{eq:reduced_LTI_system} rather than high-dimensional original system~\eqref{eq:LTI_system}.

\begin{corollary}\label{cor:application_to_BTBI}
    As $i\to\infty$, for all $j=1,\ldots, J$, the mean field particle update~\eqref{eq:mean_field} defined with reduced operators~\eqref{eq:G_BT}  converges in the following ways:
    \begin{enumerate}[(a)]
        \item $\E\big[\bP\bvTilde_{i+1}^{(j)}\big]\to \bP\muposBT$,
        \item $\Cov\big[\bP\bvTilde_{i+1}^{(j)}\big]\to\bP\GposBT\bP^\top$.
    \end{enumerate}
\end{corollary}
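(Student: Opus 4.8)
The plan is to obtain this corollary as a direct application of \Cref{thm:master_convergence}, mirroring the derivation of \Cref{cor:application_to_BIP} but with the reduced forward map $\bH_\text{RLS,BT}$ in place of $\bH_\text{RLS}$. The key observation is that running \Cref{alg:our_method} with the reduced operators is nothing more than running ensemble Kalman RMLE for the particular linear inverse problem with forward operator $\bH_\text{RLS,BT}$, data $\by_\text{RLS}$, and weight $\bGamma_\text{RLS}$. Since \Cref{thm:master_convergence} holds for an arbitrary linear forward operator, it applies verbatim to $\bH_\text{RLS,BT}$: applying $\bP$ to the limits of \Cref{thm:master_convergence}(b)--(c) and using $\bP^2=\bP$, $\bP\bS=\bzero$ yields $\E\big[\bP\bvTilde_{i+1}^{(j)}\big]\to\bP\bv^\star_\text{RLS,BT}$ and $\Cov\big[\bP\bvTilde_{i+1}^{(j)}\big]\to\bP\big(\bH_\text{RLS,BT}^\top\bGamma_\text{RLS}^{-1}\bH_\text{RLS,BT}\big)^\dagger\bP^\top$. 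Here $\bv^\star_\text{RLS,BT}=\bH_\text{RLS,BT}^+\by_\text{RLS}$ is the norm-minimizing least-squares solution and $\bP,\bS$ are the spectral projectors of \Cref{prop:bbM_spectral_projectors} now associated with $\bH_\text{RLS,BT}$.

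All that then remains is to identify these two limits with $\bP\muposBT$ and $\bP\GposBT\bP^\top$. Using the block structure $\bH_\text{RLS,BT}=[\bH_\text{BT}^\top\ \bI]^\top$ from~\eqref{eq:G_BT} and $\bGamma_\text{RLS}=\diag(\bGamma,\Gpr)$ from~\eqref{eq:TEKI_notation}, a direct block multiplication gives $\bH_\text{RLS,BT}^\top\bGamma_\text{RLS}^{-1}\bH_\text{RLS,BT}=\bH_\text{BT}^\top\bGamma^{-1}\bH_\text{BT}+\Gpr^{-1}=\GposBT^{-1}$, exactly the inverse of the reduced posterior covariance in~\eqref{eq:BT_posterior}. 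Because $\Gpr^{-1}$ is positive definite this Hessian is invertible, so its pseudoinverse coincides with its inverse and the covariance limit becomes $\bP\GposBT\bP^\top$, establishing part (b). With the centered prior ($\mupr=\bzero$) of \Cref{subsec:LTI_IP}, the same block computation gives $\bH_\text{RLS,BT}^\top\bGamma_\text{RLS}^{-1}\by_\text{RLS}=\bH_\text{BT}^\top\bGamma^{-1}\by$, whence $\bv^\star_\text{RLS,BT}=\GposBT\bH_\text{BT}^\top\bGamma^{-1}\by=\muposBT$, matching~\eqref{eq:BT_posterior} and establishing part (a).

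There is essentially no genuine obstacle beyond careful bookkeeping: convergence, the exponential rate, and the invariant-subspace structure are all inherited from \Cref{thm:state_misfit_convergence,thm:master_convergence} because those results were proved for any linear operator, and $\bH_\text{RLS,BT}$ is linear. The only point requiring mild care is to confirm that the reduced posterior quantities $\muposBT$ and $\GposBT$ in~\eqref{eq:BT_posterior} are precisely the norm-minimizing minimizer and the (inverse) Hessian of the reduced regularized least-squares problem. This is the reduced-order analogue of the identities $\mupos=\bv^\star_\text{RLS}$ and $\Gpos^{-1}=\bH_\text{RLS}^\top\bGamma_\text{RLS}^{-1}\bH_\text{RLS}$ recorded after~\eqref{eq:Gaussian_posterior}, and once it is in place parts (a) and (b) follow immediately.
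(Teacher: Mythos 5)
Your proposal is correct and follows the same route as the paper: the paper likewise obtains \Cref{cor:application_to_BTBI} by applying \Cref{thm:master_convergence} to the regularized least-squares problem defined by $\bH_\text{RLS,BT}$, $\by_\text{RLS}$, $\bGamma_\text{RLS}$, exactly as was done for \Cref{cor:application_to_BIP}. Your added bookkeeping---identifying $\GposBT^{-1}$ with the (invertible, hence pseudoinverse-free) Hessian $\bH_\text{RLS,BT}^\top\bGamma_\text{RLS}^{-1}\bH_\text{RLS,BT}$ and $\muposBT$ with the norm-minimizing solution under the centered prior---is precisely the implicit content of the paper's one-line justification, correctly spelled out.
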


\Cref{cor:application_to_BTBI} establishes that $\bP\bvTilde_{i+1}^{(j)}$ converges in distribution to $\mathcal{N}\big(\bP\muposBT,\bP\GposBT\bP^\top\big)$, which is the projection of the reduced Gaussian posterior given by \eqref{eq:BT_posterior} under $\bP$. Note that by definition, $\textsf{ran}(\bP)\subset\textsf{ran}(\bcalU_\rho)$, the balanced truncation approximation subspace. The error of the balanced truncation approximation of our ensemble Kalman RMLE approach is therefore lower-bounded by the projection error of the balanced truncation reduced model. We will demonstrate numerically in \Cref{sec:Numerics} that when this projection error is low, sampling error due to finite ensemble size dominates the error of the method. In particular, when the discrete observation process in the smoothing problem is close to its continuous-time limit, then the balanced truncation projection error is near optimal~\cite{qian_model_2022}. In such cases, the reduced cost of the balanced truncation models enable larger ensemble sizes to be computed, leading to overall reductions in error relative to the error associated with using a smaller ensemble of particles evolved with the original high-dimensional model.

\section{Linear Analysis}\label{sec:LinearAnalysis}

This section provides a complete convergence analysis of our proposed ensemble Kalman randomized maximum likelihood estimation approach for linear
$\bH\in\R^{n\times d}$. Following the analysis approach of~\cite{qian_fundamental_2024} for basic ensemble Kalman inversion, we begin with an analysis of the mean-field data misfit behavior in observation space $\R^n$ in \Cref{subsec:observation_space_analysis} that then enables us to prove results concerning the mean-field residual behavior in state space $\R^d$. These state space results were summarized in \Cref{subsec:idealized};  \Cref{subsec:stae_space_analysis} now provides more detailed analysis, intermediate results, and proofs. 


\subsection{Analysis in observation space $\R^n$}\label{subsec:observation_space_analysis}

In this section, we present linear analysis results in the observation space $\R^n$. \Cref{subsubsec:mean_field_misfit} develops the mean field data misfit iteration, \Cref{subsubsec:spectral_analysis_bcalMTilde} presents spectral analysis of the misfit iteration map, and \Cref{subsubsec:Convergence_observation_space} proves results concerning convergence of the data misfit.

\subsubsection{Mean field observation space misfit}\label{subsubsec:mean_field_misfit}

For all particles $j = 1,\ldots, J$ and iterations $i\geq 0$, we denote by $\bthetaTilde_i^{(j)} = \bH\bvTilde_i^{(j)} - \by^{(j)}$ the \textit{mean field data misfit}, which captures the deviation of the mean field iteration \eqref{eq:mean_field} under the action of $\bH$ from the perturbed data $\by^{(j)}$. From \eqref{eq:mean_field}, we have
\begin{align}
    \bthetaTilde_{i+1}^{(j)} &= \bH \bvTilde_{i+1}^{(j)} - \by^{(j)} = \bH\bMTilde_{i:0}\bv_0^{(j)} + \bH\big(\bI-\bMTilde_{i:0}\big)\bH^+\by^{(j)} - \by^{(j)}.
    \label{eq:misfit_part1}
\end{align}
Define the \textit{mean field misfit iteration map} $\bcalMTilde_i \equiv\bGamma\big(\bH\bCTilde_i\bH^\top + \bGamma\big)^{-1}$ which, via Woodbury identity, can be expressed as:
\begin{align}
    \bcalMTilde_i = \bGamma\big(\bH\bCTilde_i\bH^\top + \bGamma\big)^{-1} = \bI - \bH\bCTilde_i\big(\bI+\bH^\top\bGamma^{-1}\bH\bCTilde\big)^{-1}\bH^\top\bGamma^{-1} = \big(\bI + \bH\bCTilde_i\bH^\top\bGamma^{-1}\big)^{-1}.
    \label{eq:bcalM_alt}
\end{align}
Using the above, we can derive the identity:
\begin{align*}
    \bH\bMTilde_i = \bH(\bI-\bKTilde_i\bH) = (\bI-\bH\bKTilde_i)\bH = \big(\bI - \bH\bCTilde_i\bH^\top\big(\bH\bCTilde_i\bH^\top+\bGamma\big)^{-1}\big)\bH = \big(\bI + \bH\bCTilde_i\bH^\top\bGamma^{-1}\big)^{-1}\bH =\bcalMTilde_i\bH,
\end{align*}
where we have defined the mean field Kalman gain $\bKTilde_i = \bCTilde_i\bH^\top\big(\bH\bCTilde_i\bH^\top+\bGamma\big)^{-1}$. Repeated use of the above identity yields $\bH\bMTilde_{i:0} = \bcalMTilde_{i:0}\bH$, where $\bcalMTilde_{i:0}\equiv \bcalMTilde_i\cdots\bcalMTilde_0$, and allows us to express the misfit~\eqref{eq:misfit_part1} as:
\begin{align}
        \bthetaTilde_{i+1}^{(j)} = \bcalMTilde_{i:0}\bH\bv_0^{(j)} + \big(\bI - \bcalMTilde_{i:0}\big)\bH\bH^+\by^{(j)} - \by^{(j)} =      \bcalMTilde_{i:0}\big(\bH\bv_0^{(j)} - \by^{(j)}\big)= \bcalMTilde_{i:0}\btheta_0^{(j)},\label{eq:data_misfit_to_start}
    \end{align}
    where we expand each $\bcalMTilde_i$ using the second equality of~\eqref{eq:bcalM_alt} and use the fact that $\bH\bH^+$ is the $\bGamma^{-1}$-orthogonal projection onto $\range(\bH)$. Understanding the evolution of data misfit will therefore require an understanding of the map $\bcalMTilde_{i:0}$.

\subsubsection{Spectral analysis of mean field misfit map}\label{subsubsec:spectral_analysis_bcalMTilde}

Spectral analysis of the mean field misfit map $\bcalMTilde_{i:0}$ will reveal fundamental subspaces of the observation space $\R^n$ that remain invariant under the action of $\bcalMTilde_{i:0}$. Consider the following generalized eigenvalue problem:
        \begin{align}
            \bH\bCTilde_i\bH^\top\bw_{\ell, i} = \lambda_{\ell,i}\bGamma\bw_{\ell, i}\quad\text{for}\quad \ell = 1,\ldots, n, \quad i\geq 0,
            \label{eq:GEV_idealized}
        \end{align}
        and note that all generalized eigenvalues of~\eqref{eq:GEV_idealized} are nonnegative.

\begin{proposition}\label{prop:GEV_idealized}
             The eigenvectors of~\eqref{eq:GEV_idealized} are constant across all iterations, that is $\bw_{\ell, i+1} = \bw_{\ell, i}$ for all $i\geq 0$. The eigenvalue associated with the $\ell$-th eigenvector satisfies:
            \begin{align}
                \lambda_{\ell, i+1} = 1 - 2\prod_{k=0}^i(1+\lambda_{\ell, k})^{-1} + (1+\lambda_{\ell, 0})\prod_{k=0}^i(1+\lambda_{\ell, k})^{-2}.
                \tag{\ref{eq:idealized_eigen_evolution}}
            \end{align}
        \end{proposition}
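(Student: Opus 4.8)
The plan is to reduce the generalized eigenvalue problem~\eqref{eq:GEV_idealized} to a \emph{standard symmetric} one by whitening with $\bGamma^{-\frac12}$, and then to run an induction. First I would set $\mathbf{G}_i \equiv \bGamma^{-\frac12}\bH\bCTilde_i\bH^\top\bGamma^{-\frac12}$, which is symmetric positive semidefinite. Substituting $\bw_{\ell,i} = \bGamma^{-\frac12}\widetilde{\bw}_{\ell,i}$ into~\eqref{eq:GEV_idealized} and left-multiplying by $\bGamma^{-\frac12}$ shows that $(\lambda_{\ell,i},\bw_{\ell,i})$ solves~\eqref{eq:GEV_idealized} if and only if $(\lambda_{\ell,i},\widetilde{\bw}_{\ell,i})$ is an ordinary eigenpair of $\mathbf{G}_i$. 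Conjugating the misfit map~\eqref{eq:bcalM_alt} gives $\bGamma^{-\frac12}\bcalMTilde_i\bGamma^{\frac12} = (\bI+\mathbf{G}_i)^{-1}\equiv \mathbf{N}_i$, a matrix function of $\mathbf{G}_i$, so the whitened compound map telescopes to $\mathbf{N}_{i:0}=\bGamma^{-\frac12}\bcalMTilde_{i:0}\bGamma^{\frac12}=\mathbf{N}_i\cdots\mathbf{N}_0$. The claim then reduces to showing the $\mathbf{G}_i$ share a common eigenbasis (constancy of eigenvectors) with eigenvalues obeying~\eqref{eq:idealized_eigen_evolution}.

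The next step is a closed recursion for $\mathbf{G}_{i+1}$. I would sandwich the mean-field covariance update~\eqref{eq:idealized_cov} between $\bH$ and $\bH^\top$, apply the intertwining identity $\bH\bMTilde_{i:0}=\bcalMTilde_{i:0}\bH$ from the derivation of~\eqref{eq:data_misfit_to_start}, and whiten each side by $\bGamma^{-\frac12}$. Two facts make this collapse: $\bGamma^{-\frac12}\bcalMTilde_{i:0}=\mathbf{N}_{i:0}\bGamma^{-\frac12}$, and $\bGamma^{-\frac12}\bH\big(\bH^\top\bGamma^{-1}\bH\big)^\dagger\bH^\top\bGamma^{-\frac12}=\boldsymbol{\Pi}$, the orthogonal projector onto $\range(\bGamma^{-\frac12}\bH)$ (this is the whitened form of the $\bGamma^{-1}$-orthogonal projector $\bH\bH^+$ onto $\range(\bH)$). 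The result is
\begin{align*}
    \mathbf{G}_{i+1} = \mathbf{N}_{i:0}\,\mathbf{G}_0\,\mathbf{N}_{i:0}^\top + \big(\bI - \mathbf{N}_{i:0}\big)\boldsymbol{\Pi}\big(\bI - \mathbf{N}_{i:0}\big)^\top,
\end{align*}
and I would note that $\range(\mathbf{G}_i)\subseteq\range(\bGamma^{-\frac12}\bH)$, so $\boldsymbol{\Pi}$ fixes every eigenvector of $\mathbf{G}_i$ with nonzero eigenvalue.

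I would then induct on $i$ using the fixed orthonormal eigenbasis $\{\widetilde{\bw}_\ell\}_{\ell=1}^n$ of $\mathbf{G}_0$. Assuming $\mathbf{G}_0,\ldots,\mathbf{G}_i$ are all diagonalized by this basis with eigenvalues $\lambda_{\ell,0},\ldots,\lambda_{\ell,i}$, the maps $\mathbf{N}_0,\ldots,\mathbf{N}_i$ commute and act on $\widetilde{\bw}_\ell$ by the scalar $p_{\ell,i}\equiv\prod_{k=0}^i(1+\lambda_{\ell,k})^{-1}$, so $\mathbf{N}_{i:0}\widetilde{\bw}_\ell=\mathbf{N}_{i:0}^\top\widetilde{\bw}_\ell=p_{\ell,i}\widetilde{\bw}_\ell$. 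For a nonzero-eigenvalue eigenvector, $\boldsymbol{\Pi}\widetilde{\bw}_\ell=\widetilde{\bw}_\ell$ and the displayed recursion gives $\mathbf{G}_{i+1}\widetilde{\bw}_\ell=\big(\lambda_{\ell,0}\,p_{\ell,i}^2+(1-p_{\ell,i})^2\big)\widetilde{\bw}_\ell$; for zero eigenvectors, $p_{\ell,i}=1$ forces both terms to vanish, keeping them null. Either way $\widetilde{\bw}_\ell$ remains an eigenvector of $\mathbf{G}_{i+1}$ (constancy), and expanding $\lambda_{\ell,0}p_{\ell,i}^2+(1-p_{\ell,i})^2 = 1 - 2p_{\ell,i} + (1+\lambda_{\ell,0})p_{\ell,i}^2$ recovers exactly~\eqref{eq:idealized_eigen_evolution}. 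Undoing the whitening, the generalized eigenvectors are the constant vectors $\bw_{\ell,i}=\bGamma^{-\frac12}\widetilde{\bw}_\ell$.

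The main obstacle is the noncommuting compound products $\mathbf{N}_{i:0}$ and $\mathbf{N}_{i:0}^\top$: a priori these are products of distinct symmetric matrices that need neither commute nor act diagonally. This is precisely what the induction dissolves—the inductive hypothesis of a shared eigenbasis makes all $\mathbf{N}_k$ ($k\le i$) simultaneously diagonal, so the products collapse to scalar powers of $p_{\ell,i}$ and the two orderings agree on each $\widetilde{\bw}_\ell$. The only other care points are confirming that $\boldsymbol{\Pi}$ acts as the identity on the nonzero-eigenvalue eigenvectors (via $\range(\mathbf{G}_i)\subseteq\range(\bGamma^{-\frac12}\bH)$) and that the recursion maps $0\mapsto 0$, so the null space stays invariant.
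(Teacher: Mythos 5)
Your proposal is correct and is essentially the paper's own proof carried out in whitened coordinates: your recursion for $\mathbf{G}_{i+1}$ is exactly the paper's identity \eqref{eq:idealized_HGH} conjugated by $\bGamma^{-\frac12}$ (the paper replaces your projector $\boldsymbol{\Pi}$ by the identity, justified by the same range argument you give), and your induction---compound maps acting by the scalars $\prod_{k=0}^i(1+\lambda_{\ell,k})^{-1}$ on an inductively fixed eigenbasis---mirrors the paper's use of \eqref{eq:Fact1} and \eqref{eq:Fact4}. Reducing the generalized eigenproblem to a standard symmetric one is a cosmetic reparametrization rather than a genuinely different route.
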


\begin{proof}
    The proof will follow by induction on $i$. Note that~\eqref{eq:idealized_cov} implies
    \begin{align}
             \bH\bCTilde_{i+1}\bH^\top = \bcalMTilde_{i:0}\bH\bCTilde_0\bH^\top\bcalMTilde_{i:0}^{\top} + \big(\bI - \bcalMTilde_{i:0}\big)\bGamma\big(\bI - \bcalMTilde_{i:0}\big)^\top,
            \label{eq:idealized_HGH}
        \end{align}
    where we used the identities that led to~\eqref{eq:data_misfit_to_start}. Then, observe that if $(\lambda_{\ell, i},\bw_{\ell, i})$ is an eigenpair of~\eqref{eq:GEV_idealized}, we have that:
            \begin{align}
                \bcalMTilde_i^{\top}\bw_{\ell, i} = {(1+\lambda_{\ell, i})^{-1}}\bw_{\ell, i}, && \bcalMTilde_i\bGamma\bw_{\ell, i} = {(1+\lambda_{\ell, i})^{-1}}\bGamma\bw_{\ell, i}.
                \label{eq:Fact1}
            \end{align}
            Using~\eqref{eq:Fact1}, we can prove the base case:
             \begin{align*}
                \bH\bCTilde_1\bH^\top \bw_{\ell, 0} &= \bcalMTilde_0\bH\bCTilde_0\bH^\top\bcalMTilde_0^\top\bw_{\ell, 0} + \big(\bI - \bcalMTilde_0\big)\bGamma\big(\bI - \bcalMTilde_0\big)^\top\bw_{\ell, 0}\\
                &= \lambda_{\ell, 0}{(1+\lambda_{\ell, 0})^{-2}}\bGamma\bw_{\ell, 0} + \left(1 - 2{(1+\lambda_{\ell, 0})^{-1}} +{(1+\lambda_{\ell, 0})^{-2}}\right)\bGamma\bw_{\ell, 0}\\
                &= \left(1 - 2{(1+\lambda_{\ell, 0})^{-1}} + (1+\lambda_{\ell, 0}){(1+\lambda_{\ell, 0})^{-2}}\right)\bGamma\bw_{\ell, 0} = \underbrace{\lambda_{\ell, 0}{(1+\lambda_{\ell, 0}})^{-1}}_{\lambda_{\ell, 1}}\bGamma\bw_{\ell, 0},
            \end{align*}
            which directly implies $\bw_{\ell, 1} = \bw_{\ell, 0}$ and the expression~\eqref{eq:idealized_eigen_evolution} for $i = 0$. Now, assume that the $\ell$-th eigenvector is constant up to, and including, iteration $i$, i.e., $\bw_{\ell,i} = \cdots = \bw_{\ell, 0}$. Repeated application of \eqref{eq:Fact1} yields:
            \begin{align}
                \big(\bI - \bcalMTilde_{i:0}\big)^\top \bw_{\ell, i} = \left(1 - {\prod_{k=0}^i(1+\lambda_{\ell, k})^{-1}}\right)\bw_{\ell, i}.
                \label{eq:Fact4}
            \end{align}
            Then, the following also hold:
            \begin{align*}
                \bcalMTilde_{i:0}\bH\bGamma_0\bH^\top\bcalMTilde_{i:0}^\top \bw_{\ell, i} &= \lambda_{\ell, 0}{\prod_{k=0}^i(1+\lambda_{\ell, k})^{-2}}\bGamma\bw_{\ell, i},\\
                \big(\bI - \bcalMTilde_{i:0}\big)\bGamma(\bI - \bcalMTilde_{i:0})^\top\bw_{\ell, i} &=\left(1 - 2{\prod_{k=0}^i(1+\lambda_{\ell, k})^{-1}} + {\prod_{k=0}^i(1+\lambda_{\ell, k})^{-2}}\right)\bGamma\bw_{\ell, i}.
            \end{align*}
 Applying all of the above to \eqref{eq:idealized_HGH} gives
            \begin{align*}
                \bH\bCTilde_{i+1}\bH^\top\bw_{\ell, i} &=\bcalMTilde_{i:0}\bH\bCTilde_0\bH^\top\bcalMTilde_{i:0}^\top\bw_{\ell, i} + \big(\bI - \bcalMTilde_{i:0}\big)\bGamma\big(\bI - \bcalMTilde_{i:0}\big)^\top\bw_{\ell, i}\\
                &= \lambda_{\ell, 0}{\prod_{k=0}^i(1+\lambda_{\ell, k})^{-2}}\bGamma\bw_{\ell, i} + \left(1 - 2{\prod_{k=0}^i(1+\lambda_{\ell, k})^{-1}} + {\prod_{k=0}^i(1+\lambda_{\ell, k})^{-2}}\right)\bGamma\bw_{\ell, i}\\
                &= \underbrace{\left(1 - 2{\prod_{k=0}^i(1+\lambda_{\ell, k})^{-1}} +(1 + \lambda_{\ell,0}){\prod_{k=0}^i(1+\lambda_{\ell, k})^{-2}}\right)}_{\lambda_{\ell, i+1}}\bGamma\bw_{\ell, i},
            \end{align*}
            which directly implies that $\bw_{\ell, i+1} = \bw_{\ell, i}$ and completes the induction.
\end{proof}

\Cref{eq:idealized_eigen_evolution} shows that (non)zero eigenvalues remain (non)zero across all iterations. To understand the convergence behavior of eigenvalues of \eqref{eq:GEV_idealized}, we first prove some intermediate results.

\begin{lemma}\label{lem:UB}
        For all iterations $i\geq 1$, the positive eigenvalues of~\eqref{eq:GEV_idealized} are bounded above by $1$, i.e., $\lambda_{\ell, i}\leq 1$ for all $i\geq 1$, and are strictly monotone increasing, i.e., $\lambda_{\ell, 1}<\lambda_{\ell,2}< \cdots < \lambda_{\ell, i}<\cdots$, and therefore converge.
    \end{lemma}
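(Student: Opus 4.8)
The plan is to work entirely with the scalar recurrence \eqref{eq:idealized_eigen_evolution} from \Cref{prop:GEV_idealized}, reducing the whole statement to elementary estimates on the partial products. Fix an index $\ell$ corresponding to a \emph{positive} eigenvalue, abbreviate $a = 1+\lambda_{\ell,0}$, and set $p_{\ell,i} \equiv \prod_{k=0}^{i}(1+\lambda_{\ell,k})^{-1}$. In this notation \eqref{eq:idealized_eigen_evolution} reads $\lambda_{\ell,i+1} = 1 - 2p_{\ell,i} + a\,p_{\ell,i}^2$, so that $1-\lambda_{\ell,i+1} = p_{\ell,i}\big(2 - a\,p_{\ell,i}\big)$.

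First I would record the single structural fact that drives everything. Since all eigenvalues are nonnegative (noted after \eqref{eq:GEV_idealized}), each factor $(1+\lambda_{\ell,k})^{-1}$ lies in $(0,1]$, and the first factor equals $a^{-1}$; hence $0 < p_{\ell,i} \le a^{-1}$ for every $i\ge 0$, with equality $p_{\ell,0}=a^{-1}$ but strict inequality $p_{\ell,i}<a^{-1}$ once $i\ge 1$ (because $\lambda_{\ell,1}>0$ forces at least one factor with $k\ge 1$ to be strictly below $1$). I would also note that $p_{\ell,i} = p_{\ell,i-1}(1+\lambda_{\ell,i})^{-1} < p_{\ell,i-1}$, i.e. the partial products are strictly decreasing. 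The upper bound is then immediate: for $i\ge 0$ we have $a\,p_{\ell,i}\le 1$, so $2-a\,p_{\ell,i}\ge 1>0$ and $1-\lambda_{\ell,i+1}=p_{\ell,i}(2-a\,p_{\ell,i})>0$, giving $\lambda_{\ell,i+1}<1$, i.e. $\lambda_{\ell,i}\le 1$ for all $i\ge 1$.

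For monotonicity I would subtract consecutive terms and factor out the common difference:
\[
\lambda_{\ell,i+1} - \lambda_{\ell,i} = -2(p_{\ell,i}-p_{\ell,i-1}) + a(p_{\ell,i}^2 - p_{\ell,i-1}^2) = (p_{\ell,i}-p_{\ell,i-1})\big(a(p_{\ell,i}+p_{\ell,i-1}) - 2\big).
\]
For $i\ge 1$ the first factor is strictly negative (partial products strictly decreasing), while the second factor is strictly negative since $p_{\ell,i}<a^{-1}$ and $p_{\ell,i-1}\le a^{-1}$ give $a(p_{\ell,i}+p_{\ell,i-1})<2$. The product of two negatives is positive, so $\lambda_{\ell,i+1}>\lambda_{\ell,i}$ for all $i\ge 1$, which is exactly the chain $\lambda_{\ell,1}<\lambda_{\ell,2}<\cdots$. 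Convergence then follows at once from the monotone convergence theorem, as $(\lambda_{\ell,i})_{i\ge 1}$ is increasing and bounded above by $1$.

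The only point requiring care — minor, but the place where the argument could slip — is maintaining \emph{strict} inequalities at the boundary index $i=1$, where $p_{\ell,0}=a^{-1}$ attains the bound with equality. Here one must invoke $p_{\ell,1}<a^{-1}$ (guaranteed by $\lambda_{\ell,1}>0$) to keep the second factor strictly below $2$, together with the fact that strictly positive eigenvalues remain strictly positive across iterations (read off directly from \eqref{eq:idealized_eigen_evolution}) so that the partial products are genuinely strictly decreasing rather than merely nonincreasing.
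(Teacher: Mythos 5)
Your proof is correct and follows essentially the same route as the paper's: both arguments work purely with the scalar recurrence \eqref{eq:idealized_eigen_evolution} and elementary sign/size estimates on the partial products $\prod_{k}(1+\lambda_{\ell,k})^{-1}$, including the same care about strict positivity of surviving eigenvalues. The only difference is cosmetic — where the paper multiplies the difference $\lambda_{\ell,i+1}-\lambda_{\ell,i}$ by $\prod_{k=0}^i(1+\lambda_{\ell,k})$ and expands, you factor it as $(p_{\ell,i}-p_{\ell,i-1})\big(a(p_{\ell,i}+p_{\ell,i-1})-2\big)$, a product of two strictly negative factors, which reaches the same sign conclusion slightly more cleanly.
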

    \begin{proof}
        Note that \eqref{eq:idealized_eigen_evolution} may be equivalently expressed as:
    \begin{align}
        \lambda_{\ell, i+1} = 1 - 2(1+\lambda_{\ell, 0})^{-1}\prod_{k=1}^i(1+\lambda_{\ell, k})^{-1} + (1+\lambda_{\ell, 0})^{-1}\prod_{k=1}^i(1+\lambda_{\ell, k})^{-2}.
        \label{eq:ideal_eig_alt}
    \end{align}
        We first show the upper bound. Using \eqref{eq:ideal_eig_alt}, we may bound each eigenvalue as follows:
        \begin{align*}
            \lambda_{\ell, i+1} &\leq 1 - 2{(1+\lambda_{\ell, 0})^{-1}\prod_{k=1}^i(1+\lambda_{\ell, k})^{-1}} + {(1+\lambda_{\ell, 0})^{-1}\prod_{k=1}^i(1+\lambda_{\ell, k})^{-1}}\\
            &= 1 - {(1+\lambda_{\ell, 0})^{-1}\prod_{k=1}^i(1+\lambda_{\ell, k})^{-1}} \leq 1.
        \end{align*}
        Next, we show monotonicity by showing that $\lambda_{\ell, i+1}-\lambda_{\ell, i}>0$. From \eqref{eq:ideal_eig_alt}, we write
        \begin{align*}
            \lambda_{\ell, i+1} - \lambda_{\ell, i} = 2{\prod_{k=0}^{i-1}(1+\lambda_{\ell, k})^{-1}} - 2{\prod_{k=0}^i(1+\lambda_{\ell, k})^{-1}} + {(1+\lambda_{\ell,0})^{-1}\prod_{k=1}^i(1+\lambda_{\ell, k})^{-2}} - {(1+\lambda_{\ell, 0})^{-1}\prod_{k=1}^{i-1}(1+\lambda_{\ell, k})^{-2}}.
        \end{align*}
        Multiplying both sides of the above expression by $\prod_{k=0}^i(1+\lambda_{\ell, k})$ gives,
        \begin{align*}
            (\lambda_{\ell,i+1} - \lambda_{\ell, i})\prod_{k=0}^i(1+\lambda_{\ell, k}) &= 2(1+\lambda_{\ell, i})-2 + {\prod_{k=1}^i(1+\lambda_{\ell, k})^{-1}} - (1+\lambda_{\ell, i}){\prod_{k=1}^{i-1}(1+\lambda_{\ell, k})^{-1}}\\
            &= {\prod_{k=0}^i(1+\lambda_{\ell, k})^{-1}} + \lambda_{\ell, i} - \lambda_{\ell, i}{\prod_{k=0}^{i-1}(1+\lambda_{\ell, k})^{-1}} +\lambda_{\ell, i} - {\prod_{k=0}^{i-1}(1+\lambda_{\ell, k})^{-1}}\\
            &= \lambda_{\ell, i} - \lambda_{\ell,i}\prod_{k=0}^{i-1}(1+\lambda_{\ell, k})^{-1} + \lambda_{\ell, i} - \lambda_{\ell, i}\prod_{k=0}^i(1+\lambda_{\ell, k})^{-1} >0.
        \end{align*}
        Since $0<\prod_{k=0}^i(1+\lambda_{\ell, k})^{-1}<1$ for all $i\geq 0$, it follows that $\lambda_{\ell,i+1}>\lambda_{\ell, i}$.
    \end{proof}

We now prove \Cref{prop:GEV_statespace_alt}, which characterizes the limit and convergence rate of the sequence~\eqref{eq:idealized_eigen_evolution}.

    \begin{proof}[Proof of \Cref{prop:limit}]
        Starting from \eqref{eq:ideal_eig_alt}, we see that we can express,
        \begin{align*}
            1 - \lambda_{\ell, i+1} = 2{(1+\lambda_{\ell, 0})^{-1}\prod_{k=1}^i(1+\lambda_{\ell, k})^{-1}} - {(1+\lambda_{\ell, 0})^{-1}\prod_{k=1}^i(1+\lambda_{\ell, k})^{-2}}\leq 2{(1+\lambda_{\ell, 0})^{-1}\prod_{k=1}^i(1+\lambda_{\ell, k})^{-1}}.
        \end{align*}
        By the monotonicity of the eigenvalue sequence (\Cref{lem:UB}), we may bound:
        \begin{align}
            2{(1+\lambda_{\ell, 0})^{-1}\prod_{k=1}^i(1+\lambda_{\ell, k})^{-1}} &\leq 2{(1+\lambda_{\ell,0})^{-1}(1+\lambda_{\ell, 1})^{-i}} = 2{(1+\lambda_{\ell, 0})^{-1}\left(1+\frac{\lambda_{\ell, 0}}{1+\lambda_{\ell, 0}}\right)^{-i}}\nonumber \\
            &= 2(1+\lambda_{\ell, 0})^{i-1}{(1+2\lambda_{\ell, 0})^{-i}} = \frac{2}{1+2\lambda_{\ell, 0}}\left(1 - \frac{\lambda_{\ell, 0}}{1 + 2\lambda_{\ell,0}}\right)^{i-1} \nonumber\\
            &\leq \frac{2}{1+2\lambda_{\ell, 0}}e^{-(i-1)\gamma_\ell},
            \label{eq:product_bound}
        \end{align}
        where $\gamma_\ell = \frac{\lambda_{\ell, 0}}{1 + 2\lambda_{\ell, 0}}$, and we made use of the fact that $\lambda_{\ell, 1} = \frac{\lambda_{\ell, 0}}{1+\lambda_{\ell, 0}}$ and the inequality $(1-x)^{\beta}\leq e^{-\beta x}$. Now, fix $\epsilon > 0$. By choosing,
        \begin{align*}
            i \geq \frac{1}{\gamma_\ell}\left[\log\left(\frac{1}{\epsilon}\right) + \log\left(\frac{2}{1+2\lambda_{\ell, 0}}\right)\right] + 1,
        \end{align*}
        we guarantee that $1 - \lambda_{\ell,i+1} <\epsilon$, which satisfies the definition of the limit of a sequence and establishes the promised result.
    \end{proof}
    \begin{corollary}\label{cor:fixed_order_of_eigenvalues}
        Let $\{(\lambda_{\ell, 1},\bw_{\ell, 1})\}_{\ell=1}^n$ denote the eigenvalue-eigenvector pairs of~\eqref{eq:GEV_idealized} at $i=1$, arranged in non-increasing order so that $\lambda_{1,1}\geq \lambda_{2,1}\geq \cdots \geq \lambda_{n,1}$. This ordering is preserved for all $i>1$. 
    \end{corollary}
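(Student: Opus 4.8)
The plan is to exploit the fact that, by \Cref{prop:GEV_idealized}, the eigenvectors of \eqref{eq:GEV_idealized} are constant across iterations, so for each index $\ell$ the eigenvalue sequence $\{\lambda_{\ell,i}\}_{i\ge 0}$ is a well-defined trajectory governed by \eqref{eq:idealized_eigen_evolution} and determined entirely by its initial value $\lambda_{\ell,0}$. The base-case computation in the proof of \Cref{prop:GEV_idealized} gives $\lambda_{\ell,1}=\lambda_{\ell,0}(1+\lambda_{\ell,0})^{-1}$, and $x\mapsto x(1+x)^{-1}$ is strictly increasing on $[0,\infty)$, so the non-increasing ordering at $i=1$ coincides with the non-increasing ordering of the initial eigenvalues $\lambda_{\ell,0}$. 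It therefore suffices to prove the monotonicity statement that $\lambda_{\ell,0}\ge\lambda_{\ell',0}$ implies $\lambda_{\ell,i}\ge\lambda_{\ell',i}$ for every $i\ge 0$; the claimed preservation of ordering for $i>1$ then follows at once.

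The key reformulation I would use is to isolate the history-dependence of \eqref{eq:idealized_eigen_evolution} into a single monotone quantity. Writing $t_{\ell,i}=\prod_{k=1}^i(1+\lambda_{\ell,k})^{-1}$ (with $t_{\ell,0}=1$), the rearrangement already used to obtain \eqref{eq:ideal_eig_alt} yields $\lambda_{\ell,i+1}=1-(1+\lambda_{\ell,0})^{-1}\,t_{\ell,i}(2-t_{\ell,i})$. Because every generalized eigenvalue of \eqref{eq:GEV_idealized} is nonnegative, each factor $(1+\lambda_{\ell,k})^{-1}$ lies in $(0,1]$, so $t_{\ell,i}\in(0,1]$; moreover the scalar map $t\mapsto t(2-t)$ is increasing on $[0,1]$.

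With this in hand I would argue by strong induction on $i$, comparing the two trajectories indexed by $\ell$ and $\ell'$. The cases $i=0$ and $i=1$ hold by hypothesis and by the reduction in the first paragraph. Assuming $\lambda_{\ell,k}\ge\lambda_{\ell',k}$ for all $0\le k\le i$, monotonicity of the product gives $t_{\ell,i}=\prod_{k=1}^i(1+\lambda_{\ell,k})^{-1}\le\prod_{k=1}^i(1+\lambda_{\ell',k})^{-1}=t_{\ell',i}$; the increasing map then yields $t_{\ell,i}(2-t_{\ell,i})\le t_{\ell',i}(2-t_{\ell',i})$, while $1+\lambda_{\ell,0}\ge 1+\lambda_{\ell',0}>0$. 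Substituting these into $\lambda_{\ell,i+1}=1-(1+\lambda_{\ell,0})^{-1}t_{\ell,i}(2-t_{\ell,i})$ shows that the subtracted term is smaller for index $\ell$ than for $\ell'$ (smaller numerator, larger denominator), hence $\lambda_{\ell,i+1}\ge\lambda_{\ell',i+1}$, closing the induction.

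The main obstacle is that \eqref{eq:idealized_eigen_evolution} is \emph{not} a first-order recurrence: $\lambda_{\ell,i+1}$ depends on the entire history $\lambda_{\ell,0},\ldots,\lambda_{\ell,i}$ through $t_{\ell,i}$, so one cannot simply invoke monotonicity of a one-step update map to conclude that orderings persist. The device that overcomes this is the strong (rather than single-step) induction hypothesis, which is exactly what makes the history products $t_{\ell,i}$ and $t_{\ell',i}$ directly comparable; combined with the containment $t_{\ell,i}\in(0,1]$ guaranteed by nonnegativity of the eigenvalues and the bounds of \Cref{lem:UB}, this licenses the use of the monotonicity of $t(2-t)$ on the relevant interval and completes the argument.
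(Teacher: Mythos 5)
Your proof is correct, and it is organized differently from the paper's. The paper rewrites \eqref{eq:idealized_eigen_evolution} as a one-step update $\lambda_{\ell,i+1} = 2\lambda_{\ell,i}(1+\lambda_{\ell,i})^{-1} - \alpha_\ell\,\lambda_{\ell,i}(1+\lambda_{\ell,i})^{-2}$ with history coefficient $\alpha_\ell = (1+\lambda_{\ell,0})^{-1}\prod_{k=1}^{i-1}(1+\lambda_{\ell,k})^{-2}\in(0,1)$, and concludes by noting that $x \mapsto 2x(1+x)^{-1}-\alpha x(1+x)^{-2}$ is increasing on $[0,1]$ (which uses the bound $\lambda_{\ell,i}\leq 1$ from \Cref{lem:UB}). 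You instead absorb the entire history into $t_{\ell,i}=\prod_{k=1}^i(1+\lambda_{\ell,k})^{-1}$, write $\lambda_{\ell,i+1}=1-(1+\lambda_{\ell,0})^{-1}t_{\ell,i}(2-t_{\ell,i})$, and run a strong induction comparing two trajectories. The notable difference is this: in the paper's one-step formulation the coefficient $\alpha_\ell$ is itself trajectory-dependent, so monotonicity in $x$ alone does not close the argument; one also needs that the map is decreasing in $\alpha$ together with the inductive comparison $\alpha_\ell\leq\alpha_{\ell'}$ when $\lambda_{\ell,k}\geq\lambda_{\ell',k}$ for $k<i$ --- a step the paper's proof leaves implicit. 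Your formulation makes exactly this cross-trajectory history comparison explicit, via $t_{\ell,i}\leq t_{\ell',i}$ and monotonicity of $t\mapsto t(2-t)$ on $[0,1]$, so your write-up is, if anything, the more complete of the two. One small remark: the containment $t_{\ell,i}\in(0,1]$ needs only nonnegativity of the eigenvalues, so your citation of \Cref{lem:UB} for that purpose is redundant, though harmless.
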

    \begin{proof}
        Note that, by working with the expression for $\lambda_{\ell, i}$ in \eqref{eq:idealized_eigen_evolution}, we can express $\lambda_{\ell, i+1}$ as:
        \begin{align*}
            \lambda_{\ell, i+1} = 2\lambda_{\ell, i}(1+\lambda_{\ell, i})^{-1} - \lambda_{\ell,i}(1+\lambda_{\ell, i})^{-2}(1+\lambda_{\ell, 0})^{-1}\prod_{k=1}^{i-1} (1+\lambda_{\ell, k})^{-2},
        \end{align*}
        and note that $0<(1+\lambda_{\ell, 0})^{-1}\prod_{k=1}^{i-1}(1+\lambda_{\ell, k})^{-2}<1$. The claim follows by noticing that the map $2x(1+x)^{-1}-\alpha x(1+x)^{-2}$ is monotone increasing for all $x\in[0,1]$ and $0<\alpha<1$.
    \end{proof}

Let $\bW=[\bw_1,\ldots,\bw_r]$ denote the matrix of positive eigenvectors of~\eqref{eq:GEV_idealized}, ordered as described in \Cref{cor:fixed_order_of_eigenvalues}, noting that we have dropped the iteration index from $\bw_{\ell,i}$ because the eigenvectors are constant for all iterations. We impose the normalization $\bW^\top\bGamma\bW = \bI_r$. Then, $\bH\bCTilde_i\bH^\top\bW = \bGamma\bW\bLambda_{1:r}^{(i)}$, with $\bLambda_{1:r}^{(i)}=\textsf{diag}(\lambda_{1,i},\ldots,\lambda_{r,i})$ as before. Note that $\range(\bW)$ is an invariant subspace under the action of $\bcalMTilde_{i:0}$. We can define this subspace and its complement using spectral projectors of $\bcalMTilde_{i:0}$ as follows:

    \begin{proposition}\label{prop:SpectralMaps_bcalM}
    Let $\bcalP = \bGamma\bW\bW^\top$, and $\bcalS = \bI-\bcalP$. Then, $\bcalP$ and $\bcalS$ are spectral projectors associated with the misfit map $\bcalMTilde_{i:0}$. This means that $\bcalP^2 = \bcalP, \bcalS^2 = \bcalS$ and $\bcalP\bcalMTilde_{i:0} = \bcalMTilde_{i:0}\bcalP, \bcalS\bcalMTilde_{i:0} = \bcalMTilde_{i:0}\bcalS$, with $\bcalP\bcalS=\bcalS\bcalP=  \bzero$ and $\bcalP+\bcalS = \bI_n$.
    \end{proposition}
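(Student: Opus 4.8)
The plan is to verify the four defining properties of complementary spectral projectors in turn, observing that three of them are immediate algebraic consequences of the normalization $\bW^\top\bGamma\bW = \bI_r$ together with the definition $\bcalS = \bI - \bcalP$, while the only substantive work lies in establishing commutativity with the misfit map $\bcalMTilde_{i:0}$. This is the observation-space counterpart of \Cref{prop:bbM_spectral_projectors}, and the computation mirrors it with $\bGamma\bW$ playing the role that $\bU$ plays in the state-space argument.

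For idempotency, I would expand $\bcalP^2 = \bGamma\bW(\bW^\top\bGamma\bW)\bW^\top$ and insert the normalization $\bW^\top\bGamma\bW = \bI_r$ to collapse this to $\bGamma\bW\bW^\top = \bcalP$. The identity $\bcalP + \bcalS = \bI_n$ holds by the definition of $\bcalS$, and the remaining relations $\bcalS^2 = \bcalS$ and $\bcalP\bcalS = \bcalS\bcalP = \bzero$ then follow mechanically from $\bcalP^2 = \bcalP$ by writing $\bcalS = \bI - \bcalP$ and expanding.

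The crux of the argument is the commutation relation $\bcalP\bcalMTilde_{i:0} = \bcalMTilde_{i:0}\bcalP$. I would first prove that $\bcalP$ commutes with each individual factor $\bcalMTilde_k$, and then lift this to the product. The key tool is the eigenrelation \eqref{eq:Fact1}, which, since the eigenvectors of \eqref{eq:GEV_idealized} are constant by \Cref{prop:GEV_idealized}, states that $\bcalMTilde_k^\top\bw_\ell = (1+\lambda_{\ell,k})^{-1}\bw_\ell$ and $\bcalMTilde_k\bGamma\bw_\ell = (1+\lambda_{\ell,k})^{-1}\bGamma\bw_\ell$. The first shows that the columns of $\bW$ are left eigenvectors of $\bcalMTilde_k$, giving $\bW^\top\bcalMTilde_k = (\bI_r + \bLambda_{1:r}^{(k)})^{-1}\bW^\top$; the second shows that the columns of $\bGamma\bW$ are right eigenvectors, giving $\bcalMTilde_k\bGamma\bW = \bGamma\bW(\bI_r + \bLambda_{1:r}^{(k)})^{-1}$. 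Substituting these into $\bcalP\bcalMTilde_k = \bGamma\bW(\bW^\top\bcalMTilde_k)$ and into $\bcalMTilde_k\bcalP = (\bcalMTilde_k\bGamma\bW)\bW^\top$ shows that both products equal the common matrix $\bGamma\bW(\bI_r + \bLambda_{1:r}^{(k)})^{-1}\bW^\top$, so $\bcalP\bcalMTilde_k = \bcalMTilde_k\bcalP$. Since $\bcalMTilde_{i:0} = \bcalMTilde_i\cdots\bcalMTilde_0$ is a product of factors each commuting with $\bcalP$, the projector $\bcalP$ commutes with the whole product, and $\bcalS = \bI - \bcalP$ inherits this commutativity.

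I expect the main (though modest) obstacle to be bookkeeping the two-sided eigenstructure correctly: each $\bcalMTilde_k$ acts by the scalars $(1+\lambda_{\ell,k})^{-1}$ on the left span $\range(\bW)$ through its transpose and on the right span $\range(\bGamma\bW)$ directly, so one must track the $\bGamma$-weighting carefully to confirm that $\bcalP\bcalMTilde_k$ and $\bcalMTilde_k\bcalP$ genuinely coincide rather than differing by a stray factor of $\bGamma$. The fact that the left and right eigenvalues agree, both being $(1+\lambda_{\ell,k})^{-1}$, is precisely what makes the two products reduce to the same expression.
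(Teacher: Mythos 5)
Your proof is correct and follows essentially the same route as the paper: idempotency of $\bcalP$ from the normalization $\bW^\top\bGamma\bW = \bI_r$, commutation of $\bcalP$ with each individual factor $\bcalMTilde_k$, and then lifting to the product $\bcalMTilde_{i:0}$, with $\bcalS$ inheriting everything from $\bcalP$. The only cosmetic difference is in how the per-factor commutation is verified: the paper establishes the explicit decomposition $\bcalMTilde_k = \bGamma\bW\big(\bI+\bLambda_{1:r}^{(k)}\big)^{-1}\bW^\top + \bcalS$ (checked by multiplying against $\bcalMTilde_k^{-1}$) and reads off the commutation from it, whereas you reach the same identity $\bcalP\bcalMTilde_k = \bGamma\bW\big(\bI+\bLambda_{1:r}^{(k)}\big)^{-1}\bW^\top = \bcalMTilde_k\bcalP$ directly from the two-sided eigenrelations \eqref{eq:Fact1}; both arguments rest on exactly the same spectral structure.
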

    \begin{proof}
        We begin by establishing that:
        \begin{align*}
            \bcalP^2 = \bGamma\bW\bW^\top\bGamma\bW\bW^\top = \bGamma\bW\bW^\top = \bcalP,
        \end{align*}
        where we used the fact that, under our normalization, $\bW^\top\bGamma\bW = \bI_r$. The results $\bcalS^2 = \bcalS$ and $\bcalP\bcalS=\bcalS\bcalP=\bzero$ follow by definition. Next, we note that we can decompose $\bcalMTilde_i = \bGamma\bW\big(\bI+\bLambda_{1:r}^{(i)}\big)^{-1}\bW^\top + \bcalS$. Indeed:
        \begin{align*}
            \bcalMTilde_i^{-1}\bcalMTilde_i &= \big(\bI + \bH\bCTilde_i\bH^\top\bGamma^{-1}\big)\big(\bGamma\bW\big(\bI+\bLambda_{1:r}^{(i)}\big)^{-1}\bW^\top + \bcalS\big)\\
            &= \bGamma\bW\big[\bI + \bLambda_{1:r}^{(i)}\big]\big(\bI+\bLambda_{1:r}^{(i)}\big)^{-1}\bW^\top + \bcalS + \bH\bCTilde_i\bH^\top\bGamma^{-1}\bcalS\\
            &= \bGamma\bW\bW^\top + \bcalS = \bcalP+\bcalS = \bI_n.
        \end{align*}
        Using the above decomposition, we can evaluate:
        \begin{align*}
           \bcalP\bcalMTilde_i &= \bcalP\big(\bGamma\bW\big(\bI+\bLambda_{1:r}^{(i)}\big)^{-1}\bW^\top + \bcalS\big) = \bGamma\bW\big(\bI+\bLambda_{1:r}^{(i)}\big)^{-1}\bW^\top = \bcalMTilde_i\bcalP.
        \end{align*}
        Successive applications of the above result give the claim $\bcalP\bcalMTilde_{i:0} = \bcalMTilde_{i:0}\bcalP$, with the analogous results for $\bcalS$ following directly.
    \end{proof}

The subspace $\textsf{ran}(\bcalP)=\textsf{ran}(\bGamma\bW)$ is associated with the eigenvectors of~\eqref{eq:GEV_idealized} that have nonzero eigenvalues. These directions are associated with directions that are observable under $\bH$ and populated by the ensemble in the sense that $\textsf{ran}(\bGamma\bW)=\textsf{ran}(\bH\bCTilde_i)$. We will show that the data misfit converges in this observable and populated space, and that the misfit fails to converge in the complementary subspace $\textsf{ran}(\bcalS)$.
\subsubsection{Misfit convergence analysis in observation space}\label{subsubsec:Convergence_observation_space}

\Cref{prop:SpectralMaps_bcalM} shows that the mean field misfit $\bthetaTilde_{i+1}^{(j)}$ \eqref{eq:data_misfit_to_start} can be decomposed as:
    \begin{align}
        \bthetaTilde_{i+1}^{(j)} &= \bcalP\bthetaTilde_{i+1}^{(j)} + \bcalS\bthetaTilde_{i+1}^{(j)} = \bcalP\bcalMTilde_{i:0}\btheta_0^{(j)} + \bcalS\bcalMTilde_{i:0}\btheta_0^{(j)}=\bcalMTilde_{i:0}\bcalP\btheta_0^{(j)} + \bcalMTilde_{i:0}\bcalS\btheta_0^{(j)}.
        \label{eq:misfit_spectral_decomposition}
    \end{align}
    In order to understand how $\bthetaTilde_{i+1}^{(j)}$ evolves as $i\to\infty$, we need to understand how the misfit evolves in each invariant subspace, $\textsf{ran}(\bcalP)$ and $\textsf{ran}(\bcalS)$. We first present a lemma that shows how we can construct $\bcalMTilde_{i:0}$ using eigenvectors and eigenvalues of \eqref{eq:GEV_idealized}.

\begin{lemma}\label{lem:Mcal_compound_spectral}
        We have that:
        \begin{align}
            \bcalMTilde_{i:0} = \sum_{\ell=1}^r \left(\prod_{k=0}^i\frac{1}{1+\lambda_{\ell, k}}\right)\bGamma\bw_\ell\bw_\ell^\top  + \bcalS.
            \label{eq:Mcal_compound_spectral}
        \end{align}
    \end{lemma}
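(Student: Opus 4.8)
The plan is to leverage the single-step spectral decomposition of $\bcalMTilde_i$ that already appears in the proof of \Cref{prop:SpectralMaps_bcalM}. There it is shown that $\bcalMTilde_i = \bGamma\bW\big(\bI + \bLambda_{1:r}^{(i)}\big)^{-1}\bW^\top + \bcalS$; since $\bLambda_{1:r}^{(i)} = \diag(\lambda_{1,i},\ldots,\lambda_{r,i})$ is diagonal, this is equivalently the rank-one expansion
\begin{align}
    \bcalMTilde_i = \sum_{\ell=1}^r (1+\lambda_{\ell,i})^{-1}\bGamma\bw_\ell\bw_\ell^\top + \bcalS.
    \label{eq:single_step_decomp}
\end{align}
The key observation is that the rank-one terms $\bGamma\bw_\ell\bw_\ell^\top$ together with $\bcalS$ constitute a complete family of mutually orthogonal idempotents, so that the compound product $\bcalMTilde_{i:0} = \bcalMTilde_i\cdots\bcalMTilde_0$ telescopes onto the diagonal, accumulating the scalar factors $(1+\lambda_{\ell,k})^{-1}$ while leaving the $\bcalS$ block invariant.

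First I would record the orthogonality relations. Using the normalization $\bW^\top\bGamma\bW = \bI_r$, we have $\bw_\ell^\top\bGamma\bw_{\ell'} = \delta_{\ell\ell'}$, whence $\big(\bGamma\bw_\ell\bw_\ell^\top\big)\big(\bGamma\bw_{\ell'}\bw_{\ell'}^\top\big) = \delta_{\ell\ell'}\,\bGamma\bw_\ell\bw_\ell^\top$. Moreover, since $\bcalP = \bGamma\bW\bW^\top = \sum_{\ell=1}^r \bGamma\bw_\ell\bw_\ell^\top$ and $\bcalP\bcalS = \bcalS\bcalP = \bzero$ by \Cref{prop:SpectralMaps_bcalM}, each rank-one term annihilates $\bcalS$ on either side, i.e., $\big(\bGamma\bw_\ell\bw_\ell^\top\big)\bcalS = \bcalS\big(\bGamma\bw_\ell\bw_\ell^\top\big) = \bzero$; combined with $\bcalS^2 = \bcalS$, this gives the full orthogonal idempotent structure of the family $\{\bGamma\bw_1\bw_1^\top,\ldots,\bGamma\bw_r\bw_r^\top,\bcalS\}$.

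With these relations in hand, I would prove \eqref{eq:Mcal_compound_spectral} by induction on $i$. The base case $i=0$ is exactly \eqref{eq:single_step_decomp}, with the product reducing to its single factor $(1+\lambda_{\ell,0})^{-1}$. For the inductive step, assuming the claim holds for $\bcalMTilde_{i-1:0}$, I would form $\bcalMTilde_{i:0} = \bcalMTilde_i\bcalMTilde_{i-1:0}$ by multiplying \eqref{eq:single_step_decomp} against the inductive expression; the orthogonality relations annihilate every cross term, each diagonal term $\bGamma\bw_\ell\bw_\ell^\top$ picks up the extra factor $(1+\lambda_{\ell,i})^{-1}$ to build $\prod_{k=0}^i(1+\lambda_{\ell,k})^{-1}$, and $\bcalS\bcalS = \bcalS$ preserves the complementary block, yielding exactly \eqref{eq:Mcal_compound_spectral}.

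There is no genuinely hard step here: the result is a bookkeeping consequence of the spectral structure already established. The only point requiring care is verifying the orthogonality relations exactly, which hinges entirely on the $\bGamma$-orthonormality $\bW^\top\bGamma\bW = \bI_r$ of the eigenvectors; without this normalization the rank-one terms would not be idempotent and the clean telescoping of the product would fail.
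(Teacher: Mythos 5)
Your proof is correct and follows essentially the same route as the paper's: both start from the single-step decomposition $\bcalMTilde_i = \bGamma\bW\big(\bI+\bLambda_{1:r}^{(i)}\big)^{-1}\bW^\top + \bcalS$ established in \Cref{prop:SpectralMaps_bcalM} and multiply the factors together, using the $\bGamma$-orthonormality of the eigenvectors and the projector properties of $\bcalS$ to collapse the product. The paper leaves this multiplication as a one-line remark; you have simply spelled out the orthogonal-idempotent bookkeeping and the induction explicitly, which is a faithful elaboration rather than a different argument.
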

    \begin{proof}
        Recall that $\bcalMTilde_i = \bGamma\bW\big(\bI+\bLambda_{1:r}^{(i)}\big)^{-1}\bW^\top + \bcalS$. Hence,
        \begin{align*}
            \bcalMTilde_i = \sum_{\ell=1}^r \frac{1}{1+\lambda_{\ell, i}}\bGamma\bw_\ell\bw_\ell^\top + \bcalS.
        \end{align*}
        \Cref{eq:Mcal_compound_spectral} follows by using the above expression for each $\bcalMTilde_i$ combined with the properties of $\bcalS$ and the $\bGamma$-orthogonality of the eigenvectors.
    \end{proof}

We are now ready to state our main result regarding the convergence of the mean field misfit.
    \begin{theorem}\label{thm:data_misfit_convergence}
        For all particles $j=1,\ldots, J$, the following hold for all $i\geq 0$:
        \begin{enumerate}[(a)]
            \item $\bcalS\bthetaTilde_{i+1}^{(j)} = \bcalS\btheta_0^{(j)}$,
            \item $\bcalP\bthetaTilde_{i+1}^{(j)}\leq e^{-(i-1)\gamma}\bcalP\btheta_0^{(j)}$,
        \end{enumerate}
        where $\gamma=\min_{1\leq \ell\leq r} \gamma_\ell$, and $\gamma_\ell =  \lambda_{\ell,0}/(1+2\lambda_{\ell, 0})$.
    \end{theorem}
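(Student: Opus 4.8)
The plan is to prove both parts by substituting the explicit spectral form of $\bcalMTilde_{i:0}$ from \Cref{lem:Mcal_compound_spectral} into the decomposition \eqref{eq:misfit_spectral_decomposition}, using that $\bthetaTilde_{i+1}^{(j)} = \bcalMTilde_{i:0}\btheta_0^{(j)}$ by \eqref{eq:data_misfit_to_start} and that $\bcalP,\bcalS$ commute with $\bcalMTilde_{i:0}$ by \Cref{prop:SpectralMaps_bcalM}. The algebraic facts I will lean on throughout follow from the normalization $\bW^\top\bGamma\bW = \bI_r$: since $\bW^\top\bGamma\bw_\ell$ is the $\ell$-th column of $\bI_r$, we get $\bGamma\bW\bW^\top\bGamma\bw_\ell = \bGamma\bw_\ell$, hence $\bcalP\bGamma\bw_\ell = \bGamma\bw_\ell$, $\bcalS\bGamma\bw_\ell = \bzero$, and $\bw_\ell^\top\bcalS = \bzero$.

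For part (a), I would write $\bcalS\bthetaTilde_{i+1}^{(j)} = \bcalS\bcalMTilde_{i:0}\btheta_0^{(j)}$ and expand $\bcalMTilde_{i:0}$ via \eqref{eq:Mcal_compound_spectral}. Every term $\bcalS\bGamma\bw_\ell\bw_\ell^\top$ vanishes because $\bcalS\bGamma\bw_\ell = \bzero$, while the trailing summand obeys $\bcalS\bcalS = \bcalS$; thus $\bcalS\bcalMTilde_{i:0} = \bcalS$ and the misfit component in $\textsf{ran}(\bcalS)$ stays frozen at $\bcalS\btheta_0^{(j)}$ for every $i$.

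For part (b), I would first establish the diagonalized identity $\bcalP\bthetaTilde_{i+1}^{(j)} = \bcalP\bcalMTilde_{i:0}\btheta_0^{(j)} = \sum_{\ell=1}^r \big(\prod_{k=0}^i (1+\lambda_{\ell,k})^{-1}\big)\,(\bw_\ell^\top\btheta_0^{(j)})\,\bGamma\bw_\ell$, using $\bcalP\bGamma\bw_\ell = \bGamma\bw_\ell$ and $\bcalP\bcalS = \bzero$. Since $\{\bGamma\bw_\ell\}_{\ell=1}^r$ is orthonormal in the $\bGamma^{-1}$ inner product (because $(\bGamma\bw_\ell)^\top\bGamma^{-1}(\bGamma\bw_m) = \bw_\ell^\top\bGamma\bw_m = \delta_{\ell m}$), the vector inequality in (b) is to be read as a $\bGamma^{-1}$-weighted norm bound. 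By this orthonormality the squared norm factorizes as $\|\bcalP\bthetaTilde_{i+1}^{(j)}\|_{\bGamma^{-1}}^2 = \sum_{\ell=1}^r \big(\prod_{k=0}^i(1+\lambda_{\ell,k})^{-1}\big)^2(\bw_\ell^\top\btheta_0^{(j)})^2$, with $\|\bcalP\btheta_0^{(j)}\|_{\bGamma^{-1}}^2 = \sum_{\ell=1}^r(\bw_\ell^\top\btheta_0^{(j)})^2$, so bounding each scalar factor by its maximum over $\ell$ reduces the claim to controlling $\prod_{k=0}^i(1+\lambda_{\ell,k})^{-1}$ uniformly.

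The final step reuses the estimate already obtained inside the proof of \Cref{prop:limit}: dividing \eqref{eq:product_bound} by two gives $\prod_{k=0}^i(1+\lambda_{\ell,k})^{-1} \le (1+2\lambda_{\ell,0})^{-1}e^{-(i-1)\gamma_\ell} \le e^{-(i-1)\gamma}$, where the last inequality uses $(1+2\lambda_{\ell,0})^{-1}\le 1$ and $\gamma = \min_\ell\gamma_\ell \le \gamma_\ell$. Substituting this uniform bound yields $\|\bcalP\bthetaTilde_{i+1}^{(j)}\|_{\bGamma^{-1}} \le e^{-(i-1)\gamma}\|\bcalP\btheta_0^{(j)}\|_{\bGamma^{-1}}$. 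I do not expect a serious obstacle: the computation is mechanical once the spectral expansion is in hand, and the only points needing care are (i) pinning down that the vector inequality (b) is meant in the $\bGamma^{-1}$-norm, and (ii) the $i=0$ edge case, where the exponent $-(i-1)\gamma$ is positive and the bound holds trivially because each product factor is at most one.
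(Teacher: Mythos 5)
Your proposal is correct and takes essentially the same route as the paper's proof: part (a) follows from $\bcalS\bcalMTilde_{i:0}=\bcalS$ via the spectral expansion of \Cref{lem:Mcal_compound_spectral}, and part (b) follows from the same expansion by bounding each coefficient $\prod_{k=0}^i(1+\lambda_{\ell,k})^{-1}$ uniformly by $e^{-(i-1)\gamma}$ using \eqref{eq:product_bound}. The only difference is a refinement rather than a new idea: you read the vector inequality in (b) rigorously as a $\bGamma^{-1}$-weighted norm bound using the $\bGamma^{-1}$-orthonormality of $\{\bGamma\bw_\ell\}_{\ell=1}^r$ (and you handle the $i=0$ edge case explicitly), whereas the paper factors the uniform bound out of the sum and states the inequality directly between vectors.
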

    \begin{proof}
        From \eqref{eq:misfit_spectral_decomposition}, we have 
        \begin{align*}
            \bcalS\bthetaTilde_{i+1}^{(j)} &= \bcalMTilde_{i:0}\bcalS\bcalP\btheta_0^{(j)} + \bcalMTilde_{i:0}\bcalS^2 \btheta_0^{(j)} = \bcalMTilde_{i:0}\bcalS\btheta_0^{(j)} = \bcalS\btheta_0^{(j)},
        \end{align*}
        which follows from \Cref{lem:Mcal_compound_spectral} and gives the result in (a). To show (b), note that,
        \begin{align*}
            \bcalP\bthetaTilde_{i+1}^{(j)} &= \bcalMTilde_{i:0}\bcalP\btheta_0^{(j)} = \sum_{\ell=1}^r \left(\prod_{k=0}^i\frac{1}{1+\lambda_{\ell, k}}\right)\bGamma\bw_{\ell}\bw_{\ell}^\top\bcalP\btheta_0^{(j)},
        \end{align*}
        where we applied \Cref{lem:Mcal_compound_spectral}, and used the $\bGamma$-orthogonality of the eigenvectors. From \cref{eq:product_bound}, it follows that $\prod_{k=0}^i(1+\lambda_{\ell,k})^{-1}\leq e^{-(i-1)\gamma_\ell} \leq e^{-(i-1)\gamma}$. Using this, we may bound:
        \begin{align*}
            \bcalP\bthetaTilde_{i+1}^{(j)} &\leq e^{-(i-1)\gamma}\sum_{\ell = 1}^r \bGamma\bw_{\ell}\bw_{\ell}^\top\bcalP\btheta_0^{(j)} = e^{-(i-1)\gamma}\bcalP^2\btheta_0^{(j)} = e^{-(i-1)\gamma}\bcalP\btheta_0^{(j)},
        \end{align*}
as desired, where we used the fact that $\bcalP^2=\bcalP$.
\end{proof}

\Cref{thm:data_misfit_convergence} establishes that $\bcalP\bthetaTilde_{i+1}^{(j)}\to\bzero$ as $i\to\infty$ for all $j$. Moreover, \Cref{thm:data_misfit_convergence} directly implies the existence of a limit for $\bH\bvTilde_{i+1}^{(j)}\equiv\bhTilde_{i+1}^{(j)}$ for each $j$. In order to characterize this limit, we need an intermediate result.

\begin{lemma}\label{lem:limit_of_calM}
        We have that:
        \begin{align}
             \lim_{i\to\infty}\bcalMTilde_{i:0} = \bcalS, && \lim_{i\to\infty} \bI-\bcalMTilde_{i:0} = \bcalP.
            \label{eq:limit_of_calM}
        \end{align}
    \end{lemma}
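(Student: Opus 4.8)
The plan is to read the result directly off the explicit spectral representation of $\bcalMTilde_{i:0}$ supplied by \Cref{lem:Mcal_compound_spectral}, namely $\bcalMTilde_{i:0} = \sum_{\ell=1}^r \big(\prod_{k=0}^i(1+\lambda_{\ell,k})^{-1}\big)\bGamma\bw_\ell\bw_\ell^\top + \bcalS$. Since $\bcalS$ does not depend on $i$, the only iteration-dependence lives in the $r$ scalar coefficients $\prod_{k=0}^i(1+\lambda_{\ell,k})^{-1}$ multiplying the fixed rank-one matrices $\bGamma\bw_\ell\bw_\ell^\top$. The entire task therefore reduces to showing that each of these scalar coefficients tends to zero as $i\to\infty$.

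For this I would invoke the exponential decay estimate already established within the proof of \Cref{prop:limit} in equation~\eqref{eq:product_bound}: for every index $\ell$ with $\lambda_{\ell,0}>0$ we have $\prod_{k=0}^i(1+\lambda_{\ell,k})^{-1}\leq e^{-(i-1)\gamma_\ell}$, where $\gamma_\ell = \lambda_{\ell,0}/(1+2\lambda_{\ell,0})>0$. Because each index $\ell\in\{1,\ldots,r\}$ corresponds by construction to a \emph{strictly positive} eigenvalue of~\eqref{eq:GEV_idealized}, every $\gamma_\ell$ is strictly positive, so each coefficient decays to zero exponentially fast. Passing to the limit term-by-term in the finite sum (finiteness of $r$ permits the trivial interchange of limit and sum) annihilates the entire summation and leaves $\lim_{i\to\infty}\bcalMTilde_{i:0} = \bcalS$, which is the first claim.

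The second claim is then immediate from the complementarity relation $\bcalP + \bcalS = \bI_n$ proved in \Cref{prop:SpectralMaps_bcalM}, which gives $\lim_{i\to\infty}\big(\bI-\bcalMTilde_{i:0}\big) = \bI - \bcalS = \bcalP$.

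There is essentially no genuine obstacle here: the heavy lifting, i.e., the monotonicity and exponential convergence of the eigenvalue products, was already carried out in \Cref{lem:UB} and \Cref{prop:limit}, so this lemma is effectively a corollary. The only point meriting a word of care is that the decay bound applies solely to the $r$ coefficients indexed by strictly positive eigenvalues; the eigenvectors associated with zero eigenvalues contribute nothing to the sum in \Cref{lem:Mcal_compound_spectral} and are wholly absorbed into $\bcalS$, so no separate treatment of the kernel directions is needed.
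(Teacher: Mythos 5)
Your proposal is correct and follows essentially the same route as the paper: both expand $\bcalMTilde_{i:0}$ via \Cref{lem:Mcal_compound_spectral}, kill the $r$ scalar coefficients using the exponential product bound \eqref{eq:product_bound} (the paper bounds all terms uniformly by $e^{-(i-1)\gamma}$ with $\gamma=\min_\ell\gamma_\ell$, while you keep the individual rates $\gamma_\ell$ — an immaterial difference), and obtain the second limit from $\bcalP+\bcalS=\bI_n$.
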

    \begin{proof}
        For the first result, we use \Cref{lem:Mcal_compound_spectral} to conclude:
        \begin{align*}
            \lim_{i\to\infty}\bcalMTilde_{i:0} &= \sum_{\ell=1}^r\left(\lim_{i\to\infty}\prod_{k=0}^i \frac{1}{1+\lambda_{\ell, k}}\right)\bGamma\bw_\ell\bw_\ell^\top+\bcalS \leq \sum_{\ell=1}^r \lim_{i\to\infty}e^{-(i-1)\gamma}\bGamma\bw_\ell\bw_\ell^\top + \bcalS = \bcalS,
        \end{align*}
        where $\gamma$ is defined as in \Cref{thm:data_misfit_convergence}. The second result follows by definition of $\bcalS=\bI-\bcalP$.
    \end{proof}
    We are now ready to state our main result.

\begin{theorem}\label{thm:master_convergence_observation_space}
    As $i\to\infty$, we have that for all $j=1,\ldots, J$:
    \begin{enumerate}[(a)]
        \item $\bhTilde_{i+1}^{(j)}\to \bhTilde_\infty^{(j)} = \bcalP\by^{(j)} + \bcalS\bh_0^{(j)}$,
        \item $\E\big[\bhTilde_{i+1}^{(j)}\big]\to \E\big[\bhTilde_\infty^{(j)}\big] = \bcalP\by +\bcalS\bH\bmu_0$,
        \item $\Cov\big[\bhTilde_{i+1}^{(j)}\big]\to\Cov\big[\bhTilde_\infty^{(j)}\big] = \bcalP\bGamma\bcalP^\top + \bcalS\bH\bCTilde_0\bH^\top\bcalS^\top$.
    \end{enumerate}
\end{theorem}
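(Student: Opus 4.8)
The plan is to reduce everything to the closed-form misfit iteration \eqref{eq:data_misfit_to_start} together with the limit computed in \Cref{lem:limit_of_calM}, after which parts (b) and (c) follow by taking moments. First I would observe that, by definition, $\bhTilde_{i+1}^{(j)} = \bH\bvTilde_{i+1}^{(j)} = \bthetaTilde_{i+1}^{(j)} + \by^{(j)}$, so substituting $\bthetaTilde_{i+1}^{(j)} = \bcalMTilde_{i:0}\btheta_0^{(j)}$ from \eqref{eq:data_misfit_to_start} and writing $\btheta_0^{(j)} = \bh_0^{(j)} - \by^{(j)}$ gives the exact expression
\begin{align*}
    \bhTilde_{i+1}^{(j)} = \by^{(j)} + \bcalMTilde_{i:0}\big(\bh_0^{(j)} - \by^{(j)}\big).
\end{align*}
In the mean-field limit $\bcalMTilde_{i:0}$ is a deterministic matrix, so its convergence $\bcalMTilde_{i:0}\to\bcalS$ from \Cref{lem:limit_of_calM} holds independently of the realizations of $\by^{(j)}$ and $\bh_0^{(j)}$. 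Letting $i\to\infty$ and using $\bcalP = \bI - \bcalS$ then yields
\begin{align*}
    \bhTilde_\infty^{(j)} = \by^{(j)} + \bcalS\big(\bh_0^{(j)} - \by^{(j)}\big) = \bcalP\by^{(j)} + \bcalS\bh_0^{(j)},
\end{align*}
which is part (a).

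For parts (b) and (c), I would note that because $\bcalMTilde_{i:0}$ is deterministic, the mean and covariance of $\bhTilde_{i+1}^{(j)}$ are continuous (respectively linear and bilinear) functions of $\bcalMTilde_{i:0}$; hence the limiting moments are obtained simply by taking moments of the part (a) expression, with no interchange-of-limit argument beyond continuity. For the mean, I would use $\by^{(j)} = \by + \beps^{(j)}$ with $\E\big[\beps^{(j)}\big] = \bzero$, so $\E\big[\by^{(j)}\big] = \by$, and $\E\big[\bh_0^{(j)}\big] = \bH\,\E\big[\bv_0^{(j)}\big] = \bH\bmu_0$ since the initial ensemble has mean $\bmu_0$; applying $\E$ to $\bhTilde_\infty^{(j)}$ gives $\E\big[\bhTilde_\infty^{(j)}\big] = \bcalP\by + \bcalS\bH\bmu_0$, which is part (b).

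For the covariance I would decompose $\bhTilde_\infty^{(j)} = \bcalP\by + \bcalP\beps^{(j)} + \bcalS\bH\bv_0^{(j)}$, where the first term is deterministic and the remaining terms depend on the independent random variables $\beps^{(j)}$ (covariance $\bGamma$) and $\bv_0^{(j)}$ (covariance $\bCTilde_0$). Invoking the standing assumption that the data perturbations are independent of the initial ensemble, the cross-covariance between $\bcalP\beps^{(j)}$ and $\bcalS\bH\bv_0^{(j)}$ vanishes, leaving
\begin{align*}
    \Cov\big[\bhTilde_\infty^{(j)}\big] = \bcalP\bGamma\bcalP^\top + \bcalS\bH\bCTilde_0\bH^\top\bcalS^\top,
\end{align*}
which is part (c). The argument is essentially routine once part (a) is in hand; the only points requiring care are confirming that the mean-field $\bcalMTilde_{i:0}$ is genuinely deterministic (so the moment computation factors cleanly through the deterministic limit $\bcalS$) and tracking the independence structure so that the two variance contributions remain decoupled and no cross term survives.
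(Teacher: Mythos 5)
Your proposal is correct and follows essentially the same route as the paper: rewrite $\bhTilde_{i+1}^{(j)}$ via the misfit iteration \eqref{eq:data_misfit_to_start}, pass to the limit using $\bcalMTilde_{i:0}\to\bcalS$ from \Cref{lem:limit_of_calM}, and obtain the moments by exploiting that the mean-field map is deterministic together with the independence of $\beps^{(j)}$ from the initial ensemble. The only cosmetic difference is in part (c), where you re-derive the covariance decomposition directly from the independence structure rather than citing the pre-established identity \eqref{eq:idealized_HGH} as the paper does; the two computations are the same argument written in different order.
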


\begin{proof}
    Statement (a) is a formal consequence of \Cref{thm:data_misfit_convergence}. For (b), we compute:
    \begin{align*}
        \E\big[\bhTilde_{i+1}^{(j)}\big] &= \E\big[\bH\bvTilde_{i+1}^{(j)}\big] = \E\big[\bcalMTilde_{i:0}\bH\bv_0^{(j)} + \big(\bI-\bcalMTilde_{i:0}\big)\by^{(j)}\big] = \big(\bI - \bcalMTilde_{i:0}\big)\by + \bcalMTilde_{i:0}\bH\bmu_0,
    \end{align*}
    and apply \Cref{lem:limit_of_calM}. Finally, for (c), note that $\Cov\big[\bhTilde_{i+1}^{(j)}\big] = \bH\bCTilde_{i+1}\bH^\top$ and apply \Cref{lem:limit_of_calM} to \cref{eq:idealized_HGH}.
\end{proof} 

\subsection{Analysis in state space $\R^d$}\label{subsec:stae_space_analysis}
We now build on our results in the observation space $\R^n$ to prove analogous results in the state space $\R^d$. These results were summarized in \Cref{subsec:idealized}. In this section, we provide more details and proofs. \Cref{subsubsec:decomposition_of_state_space} presents detailed spectral analysis of the mean field residual iteration map and \Cref{subsubsec:state_convergence} completes our state space convergence theory.

\subsubsection{Spectral analysis of mean field residual map}\label{subsubsec:decomposition_of_state_space}

We begin by showing that the eigenvalues of~\eqref{eq:statespace_GEV_alt} are the same as the eigenvalues of~\eqref{eq:GEV_idealized}, and that the eigenvectors of~\eqref{eq:statespace_GEV_alt} can be given in terms of the eigenvectors of~\eqref{eq:GEV_idealized}.
\begin{proof}[Proof of \Cref{prop:GEV_statespace_alt}]
            Let $\{(\lambda_{\ell, i}, \bw_{\ell, i})\}_{\ell = 1}^n$ be eigenpairs of \eqref{eq:GEV_idealized} ordered as described in~\Cref{cor:fixed_order_of_eigenvalues}. For $\ell\leq r$, let $\bu_{\ell,i} = {\lambda_{\ell, i}}^{-1}\bCTilde_i\bH^\top\bw_{\ell, i}$. Then, for all $\ell\leq r$, $\bu_{\ell, i}$ is an eigenvector of \eqref{eq:statespace_GEV_alt} with corresponding eigenvalue $\lambda_{\ell,i}$. The relationship can be verified by substitution. We now show, via induction on $i$, that eigenvectors $\bu_{\ell, i}$ are constant across iterations. First, note that using $\bMTilde_i = \big(\bI + \bCTilde_i\bH^\top\bGamma^{-1}\bH\big)^{-1}$ it is possible to show that $\bMTilde_i\bu_{\ell, i} = (1+\lambda_{\ell, i})^{-1}\bu_{\ell, i}$. Then, from~\eqref{eq:idealized_cov}, we have
            \begin{align*}
                \bu_{\ell,1} &= \lambda_{\ell,1}^{-1}\bCTilde_1\bH^\top\bw_{\ell, 1} = \lambda_{\ell, 1}^{-1}\bCTilde_1\bH^\top\bw_{\ell, 0} \\
                &= \lambda_{\ell,1}^{-1}\bMTilde_0\bCTilde_0\bH^\top\bcalMTilde_0^\top\bw_{\ell, 0} + \lambda_{\ell,1}^{-1}(\bI-\bMTilde_0)\big(\bH^\top\bGamma^{-1}\bH\big)^\dagger\bH^\top (\bI-\bcalMTilde_0)^\top\bw_{\ell, 0}\\
                &= \lambda_{\ell, 1}^{-1}\lambda_{\ell, 0}(1+\lambda_{\ell, 0})^{-2}\bu_{\ell, 0} + \lambda_{\ell, 1}^{-1}\big(1-(1+\lambda_{\ell, 0})\big)^{-1}(\bI-\bcalM_0)\big(\bH^\top\bGamma^{-1}\bH\big)^\dagger\bH^\top\bw_{\ell, 0}.
            \end{align*}
            Using the above, we may evaluate the base case:
            \begin{align*}
                \bCTilde_0\bH^\top\bGamma^{-1}\bH\bu_{\ell, 1} &= \bCTilde_0\bH^\top\bGamma^{-1}\bH\big(\lambda_{\ell, 1}^{-1}\lambda_{\ell, 0}(1+\lambda_{\ell, 0})^{-2}\bu_{\ell, 0} + \lambda_{\ell, 1}^{-1}\big(1-(1+\lambda_{\ell, 0})\big)^{-1}(\bI-\bcalM_0)\big(\bH^\top\bGamma^{-1}\bH\big)^\dagger\bH^\top\bw_{\ell, 0}\big)\\
                &= \lambda_{\ell,1}^{-1}\big(\lambda_{\ell, 0}^2(1+\lambda_{\ell, 0})^{-2} + \lambda_{\ell, 0}\big(1-(1+\lambda_{\ell, 0})^{-1}\big)-\lambda_{\ell,0}(1+\lambda_{\ell, 0})^{-1}\big(1-(1+\lambda_{\ell, 0})\big)^{-1}\big)\bu_{\ell,0}\\
                &= \lambda_{\ell, 0}\bu_{\ell, 0}.
            \end{align*}
            The base case is confirmed. Now, assume that $\bu_{\ell,i}=\cdots=\bu_{\ell, 0}$. The inductive hypothesis can be used to show that:
            \begin{align*}
                \bu_{\ell, i+1} &= \lambda_{\ell, i+1}^{-1}\bCTilde_{i+1}\bH^\top\bw_{\ell, i+1} = \lambda_{\ell, i+1}^{-1}\bCTilde_{i+1}\bH^\top\bw_{\ell, i}\\
                &=\lambda_{\ell, i+1}^{-1}\left(\prod_{k=0}^i (1+\lambda_{\ell, k})^{-1}\right)\bMTilde_{i:0}\bCTilde_0\bH^\top\bw_{\ell, i} + \lambda_{\ell, i+1}^{-1}\left(1-\prod_{k=0}^i (1+\lambda_{\ell, k})^{-1}\right)(\bI-\bMTilde_{i:0})\big(\bH^\top\bGamma^{-1}\bH\big)^\dagger\bH\bw_{\ell, i} \\
                &= \lambda_{\ell,i+1}^{-1}\lambda_{\ell, 0}\left(\prod_{k=0}^i(1+\lambda_{\ell, k})^{-2}\right) + \lambda_{\ell, i+1}^{-1}\left(1-\prod_{k=0}^i (1+\lambda_{\ell, k})^{-1}\right)(\bI-\bMTilde_{i:0})\big(\bH^\top\bGamma^{-1}\bH\big)^\dagger\bH\bw_{\ell, i},
            \end{align*}
            which in turn can be used to show that:
            \begin{align*}
                \bCTilde_i\bH^\top\bGamma^{-1}\bH\bu_{\ell,i+1} &= \left(\lambda_{\ell, i+1}^{-1}\lambda_{\ell,i}\lambda_{\ell, 0}\left(\prod_{k=0}^i (1+\lambda_{\ell, k})^{-2}\right) + \lambda_{\ell,i+1}^{-1}\lambda_{\ell, i}\left(1-\prod_{k=0}^i (1+\lambda_{\ell, k})\right)^2\right)\bu_{\ell,i}.
            \end{align*}
            This completes the induction and shows that eigenvectors of~\eqref{eq:statespace_GEV_alt} are constant across all iterations.

        \end{proof}

\Cref{prop:bbM_spectral_projectors} uses the eigenvectors of~\eqref{eq:statespace_GEV_alt} to define complementary spectral projectors $\bP$ and $\bS$ of state space, similar to \Cref{prop:SpectralMaps_bcalM} in observation space.

        \begin{proof}[Proof of \Cref{prop:bbM_spectral_projectors}]
            We begin by verifying:
            \begin{align*}
                \bP^2 &= \bU\bU^\top\bH^\top\bGamma^{-1}\bH\bU\bU^\top\bH^\top\bGamma^{-1}\bH = \bU\bI_r\bU^\top\bH^\top\bGamma^{-1}\bH = \bP.
            \end{align*}
            The claim $\bS^2=\bS$ follows by definition of $\bS$ and by $\bP^2=\bP$. Using the definitions, we can also verify $\bP\bS = \bS\bP =\bzero$. Note that we can express $\bMTilde_i = \bU\big(\bLambda_{1:r}^{(i)} + \bI\big)^{-1}\bU^\top\bH^\top\bGamma^{-1}\bH + \bS$. Indeed,
            \begin{align*}
                \bMTilde_i^{-1}\bMTilde_i &= \big(\bI + \bCTilde_i\bH^\top\bGamma^{-1}\bH\big)\big(\bU\big(\bLambda_{1:r}^{(i)} + \bI\big)^{-1}\bU^\top\bH^\top\bGamma^{-1}\bH  + \bS\big)\\
                &= \bU\big[\bI + \bLambda_{1:r}^{(i)}\big]\big(\bLambda_{1:r}^{(i)} + \bI\big)^{-1}\bU^\top\bH^\top\bGamma^{-1}\bH + \bS\\
                &= \bU\bU^\top \bH^\top\bGamma^{-1}\bH +\bS = \bP + \bS  =\bI.
            \end{align*}
            Using the above, we may verify that
            \begin{align*}
                \bP\bMTilde_i &= \bU\bU^\top \bH^\top\bGamma^{-1}\bH \big(\bU\big(\bLambda_{1:r}^{(i)} + \bI\big)^{-1}\bU^\top\bH^\top\bGamma^{-1}\bH + \bS\big)\\
                &= \bU\big(\bLambda_{1:r}^{(i)} + \bI\big)^{-1}\bU^\top\bH^\top\bGamma^{-1}\bH\\
                &= \big(\bU\big(\bLambda_{1:r}^{(i)} + \bI\big)^{-1}\bU^\top\bH^\top\bGamma^{-1}\bH + \bS\big)\bU\bU^\top \bH^\top\bGamma^{-1}\bH = \bMTilde_i\bP.
            \end{align*}
            Successive application of the above result gives $\bP\bMTilde_{i:0} = \bMTilde_{i:0}\bP$ and the results for $\bS$ follow by definition.
        \end{proof}

\subsubsection{Residual convergence analysis in state space}\label{subsubsec:state_convergence}
\Cref{thm:state_misfit_convergence} states that the mean field residual $\bomega_{i+1}^{(j)}$ converges to zero in $\textsf{ran}(\bcalP)$ and remains constant in $\textsf{ran}(\bcalS)$. 
    
    \begin{proof}[Proof of \Cref{thm:state_misfit_convergence}]
        Statement (a) follows the same logic as in the proof of \Cref{thm:data_misfit_convergence}. To arrive at (b), we note that:
        \begin{align*}
            \bMTilde_{i:0} &= \sum_{\ell = 1}^r\left(\prod_{k=0}^i \frac{1}{1+\lambda_{\ell, i}}\right)\bu_\ell\bu_\ell^\top \bH^\top\bGamma^{-1}\bH  + \bS,
        \end{align*}
        which follows by noticing that,
        \begin{align*}
            \bMTilde_i = \sum_{\ell = 1}^r \frac{1}{1+\lambda_{\ell, i}}\bu_\ell\bu_\ell^\top\bH^\top\bGamma^{-1}\bH + \bS,
        \end{align*}
        and by applying the weighted normalization of the eigenvectors. Using the above, we express
        \begin{align*}
            \bP\bomegaTilde_{i+1}^{(j)} &= \bMTilde_{i:0}\bP\bomega_0^{(j)}= \sum_{\ell = 1}^r \left(\prod_{k=0}^i \frac{1}{1+\lambda_{\ell, k}}\right)\bu_\ell\bu_\ell^\top \bH^\top\bGamma^{-1}\bH\bP\bomega_0^{(j)},
        \end{align*}
        and we follow the same steps as in the proof of \Cref{thm:data_misfit_convergence}.
        
    \end{proof}

\Cref{thm:master_convergence} uses the results of \Cref{thm:state_misfit_convergence} to establish a limit for each mean field particle, and subsequent limits for the ensemble mean and covariance. 

\begin{proof}[Proof of \Cref{thm:master_convergence}]
   \Cref{thm:state_misfit_convergence} shows that $\bP\bomegaTilde_{i+1}^{(j)}\to\bzero$ as $i\to\infty$; this gives the result presented in (a). A direct analog of \Cref{lem:limit_of_calM} shows that:
   \begin{align*}
           \lim_{i\to\infty}\bMTilde_{i:0} = \bS, && \lim_{i\to\infty} \bI-\bMTilde_{i:0} = \bP.
    \end{align*}    
   For (b), take the expectation (with respect to the particles and the perturbations) of \eqref{eq:mean_field} and apply the above limits. Finally, for (c), we use \eqref{eq:idealized_cov} and apply the above limits.
\end{proof}

The last result of this analysis, \Cref{cor:convergence_in_distribution}, shows convergence in distribution.

\begin{proof}[Proof of \Cref{cor:convergence_in_distribution}]
    First, note that:
    \begin{align*}
        \bP\bvTilde_\infty^{(j)} = \bP\bH^+\by + \bP\bH^+\beps^{(j)} = \bP\bv^\star + \bP\bH^+\bGamma^{\frac{1}{2}}\bzeta^{(j)},
    \end{align*}
    where $\bzeta^{(j)}\sim\mathcal{N}(\bzero,\bI)$. By definition, the above is a multivariate normal random vector with mean $ \bP\bv^\star$ and covariance $\big(\bP \bH^+\bGamma^\frac{1}{2}\big)\big(\bP \bH^+\bGamma^\frac{1}{2}\big)^\top = \bP\big(\bH^\top\bGamma^{-1}\bH\big)^\dagger\bP^\top$.
\end{proof}

\section{Numerical results}\label{sec:Numerics}

This section presents numerical results that demonstrate our method and illustrate its theoretical properties. \Cref{subsec:limits_numerical} presents results illustrating both the state and observation space convergence properties of the method. \Cref{subsec:Bayesian_numerical} demonstrates that our method generates samples from the posterior distribution of a linear Gaussian smoothing problem of the form described in \Cref{subsec:LTI_IP}, and demonstrates that the model reduction method described in \Cref{subsec:BTBI} can be used to reduce the computational cost of the method with negligible loss in accuracy. Julia code that produces all numerical results can be found at \url{https://github.com/pstavrin/EKRMLE}.

\subsection{Numerical illustration of convergence theory}\label{subsec:limits_numerical}
We follow the example of \cite{qian_fundamental_2024} to define an inverse problem of the form \eqref{eq:EKI_min} as follows. We have $n=500$ observations and $d=1000$ states with randomly generated $\bH,\bGamma,\by$ and initial ensemble $\bv_0^{(1:J)}$, such that: $\bGamma\in\R^{n\times n}$ is a random symmetric positive definite and $\bH\in\R^{n\times d}$ is constructed so that both $\kernel(\bH)$ and $\kernel\big(\bH^\top\big)$ are non-trivial. Observations $\by$ are created by applying $\bH$ to a random vector in $\R^d$ and adding noise drawn from $\mathcal{N}(\bzero,\bGamma)$. The initial ensemble is randomly drawn such that particles typically have nonzero components in both $\range\big(\bH^\top\big)$ and $\kernel(\bH)$, but do \textit{not} contain $\range\big(\bH^\top\big)$ in their span. Note that if the dimensions of the problem are different, particularly if $n\geq d$, the exact manifestations of the convergent and non-convergent subspaces will change, but our convergence results still apply.

We quantify convergence in the convergent and non-convergent subspaces by considering the relative ensemble errors and the relative covariance errors:
\begin{subequations}
    \begin{align}
    &\EE\left[\frac{\norm{\bcalP\big(\bh_i^{(1:J)} - \bhTilde_\infty^{(1:J)}\big)}}{\norm{\bcalP\bhTilde_\infty^{(1:J)}}}\right], && \EE\left[\frac{\norm{\bP\big(\bv_i^{(1:J)} -\bvTilde_\infty^{(1:J)}\big)}}{\norm{\bP\bvTilde_\infty^{(1:J)}}}\right],\label{eq:mean_metrics}\\
    &\frac{\norm{\CovE\big[\bcalP\big(\bh_i^{(1:J)} - \bhTilde_\infty^{(1:J)}\big)\big]}}{\norm{\CovE\big[\bcalP\bhTilde_\infty^{(1:J)}\big]}}, && \frac{\norm{\CovE\big[\bP\big(\bv_i^{(1:J)} -\bvTilde_\infty^{(1:J)}\big)\big]}}{\norm{\CovE\big[\bP\bvTilde_\infty^{(1:J)}\big]}},
    \label{cov_metrics}
\end{align}
\end{subequations}
with analogous relative error metrics for the $\bcalS/\bS$ projectors. Recall that $\bhTilde_\infty^{(j)}$ and $\bvTilde_\infty^{(j)}$ are the infinite iteration limits under the mean field iteration, as identified in \Cref{thm:master_convergence_observation_space,thm:master_convergence}, respectively. In this way, we compare the true algorithmic iterations with their mean field limits, for which we prove convergence results. We present results for a large ensemble of size $J=10000$ and for a small ensemble of size $J=10$, both for $i$ up to $i_{\rm max}=100$.

\Cref{fig:misfit_means} plots the ensemble mean errors~\eqref{eq:mean_metrics} over iterations in the convergent subspace $\textsf{ran}(\bP)$ and the non-convergent subspace $\textsf{ran}(\bS)$. Solid purple lines indicate convergence of the ensemble means in the $\bcalP/\bP$ spaces where, for a large ensemble, we observe exponential convergence of both the observation and state spaces. For a small ensemble, convergence is not achieved as the true iterations \eqref{eq:EKIRMLE_update_step} are far from the mean field setting \eqref{eq:mean_field} for which we have proved convergence. Dashed orange lines demonstrate behavior in the $\bcalS/\bS$ spaces, where we observe that the components in this subspace remain constant across all iterations.

In \Cref{fig:misfit_covs}, we plot the ensemble covariance errors~\eqref{cov_metrics} over iterations, where we observe similar trends to those in  \Cref{fig:misfit_means}. Namely, we observe exponential convergence to the limiting mean field sample covariance in the $\bcalP/\bP$ spaces and constant components in the $\bcalS/\bS$ spaces. Overall, results presented in \Cref{fig:misfit_means,fig:misfit_covs} verify the theoretical results developed in \Cref{thm:master_convergence,thm:master_convergence_observation_space}.

\begin{figure}[htb!]
    \centering
    \includegraphics[width=0.75\linewidth]{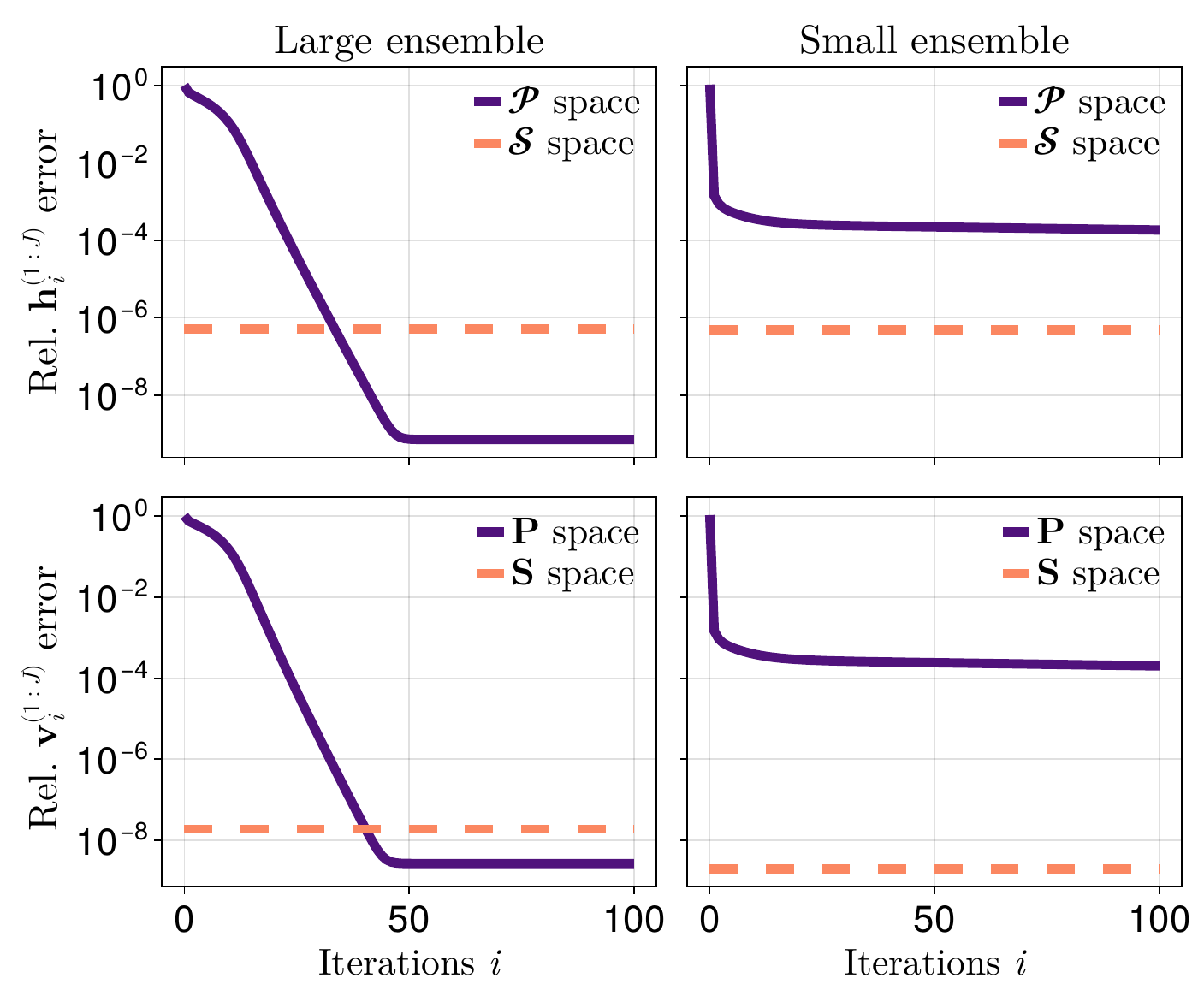}
    \caption{Mean relative errors~\eqref{eq:mean_metrics} of particles with respect to their infinite iteration limits under the mean field iteration, for a randomly generated problem with dimensions $n=500$ and $d=1000$. The large ensemble consists of $J=10000$ particles, whereas the small ensemble consists of $J=10$ particles.}
    \label{fig:misfit_means}
\end{figure}

\begin{figure}[htb!]
    \centering
    \includegraphics[width=0.75\linewidth]{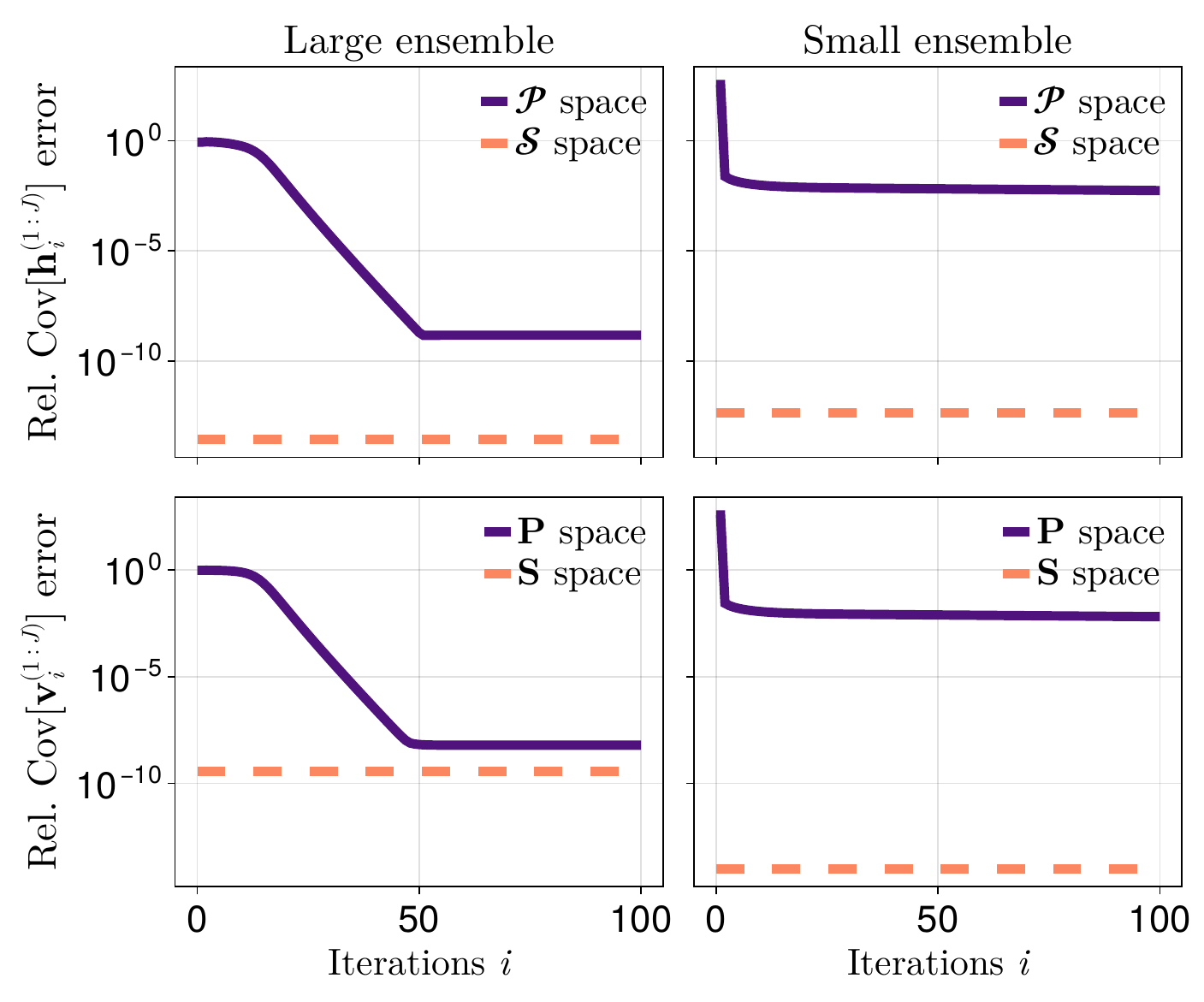}
    \caption{Relative ensemble covariance errors~\eqref{cov_metrics} with respect to the sample covariance of infinite iteration limits under the mean field iteration, for a randomly generated problem with dimensions $n=500$ and $d=1000$. The large ensemble consists of $J=10000$ particles, whereas the small ensemble consists of $J=10$ particles.}
    \label{fig:misfit_covs}
\end{figure}

\subsection{Bayesian inversion \& model reduction}\label{subsec:Bayesian_numerical}

In this section, we apply our new method (\Cref{subsec:EKRMLE}) to the Bayesian inference of the unknown initial condition of a linear dynamical system, as described in \Cref{subsec:LTI_IP}, and show that the ensemble captures posterior statistics. We additionally demonstrate that the reduced models described in \Cref{subsec:BTBI} can accelerate the algorithm with minimal loss in accuracy. We consider the heat equation benchmark problem from \cite{qian_model_2022} on a one-dimensional rod, with homogeneous Dirichlet boundary conditions, with operators defining~\eqref{eq:LTI_system} (\texttt{heat-cont.mat}) from \url{https://www.slicot.org/20-site/126-benchmark-examples-for-model-reduction}. The state and output dimensions are $d=200$ and $d_\text{out} = 1$, respectively, where the single output is the temperature measured at $2/3$ of the rod length. We define a Gaussian prior $\mathcal{N}(\bzero,\Gpr)$ where $\Gpr$ is the solution of the Lyapunov equation $\bA\Gpr + \Gpr\bA + \bI = \bzero$. 
This allows the theoretical guarantees of~\cite{qian_model_2022} to apply, so that the posterior covariance associated with the balanced truncation reduced forward model is near-optimal.

We simulate the system via forward Euler on the time interval $[0,10]$ with timestep $10^{-3}$, and we take noisy measurements at times $t=\{0.1,0.2,\ldots,10\}$, yielding a total of $n=m=100$ measurements. The noise values $\bheta$ at each measurement are i.i.d. and drawn from $\mathcal{N}(0,\sigma_\text{obs}^2)$ where we set $\sigma_\text{obs} = 0.008$, which is approximately $10\%$ of the maximum noiseless output.

We use our new method (\Cref{alg:our_method}) to solve the inverse problem formulated in \Cref{subsec:LTI_IP} and compute the full posterior statistics using the high-dimensional model as well as the reduced approximations using the reduced models described in~\eqref{eq:BT_posterior}. We consider reduced basis sizes $r=\{3,5,10,20\}$ and ensemble sizes $\{10^3, 10^4, 10^5, 10^6\}$. Due to the inherent randomness in our algorithm, we conduct $30$ Monte Carlo (MC) replicates, using the same data but a different initial ensemble, of each experiment and report the following mean relative error metrics for the ensemble mean and covariance averaged over the replicates:
\begin{align}
    \frac{\norm{\mupos - \EE\big[\bv_i^{(1:J)}\big]}_{\Gpos^{-1}}}{\norm{\mupos}_{\Gpos^{-1}}}, && \frac{\norm{\Gpos - \CovE\big[\bv_i^{(1:J)}\big]}}{\norm{\Gpos}}.
    \label{eq:error_metrics}
\end{align}

\begin{figure}[htb!]
    \centering
    \includegraphics[width=\linewidth]{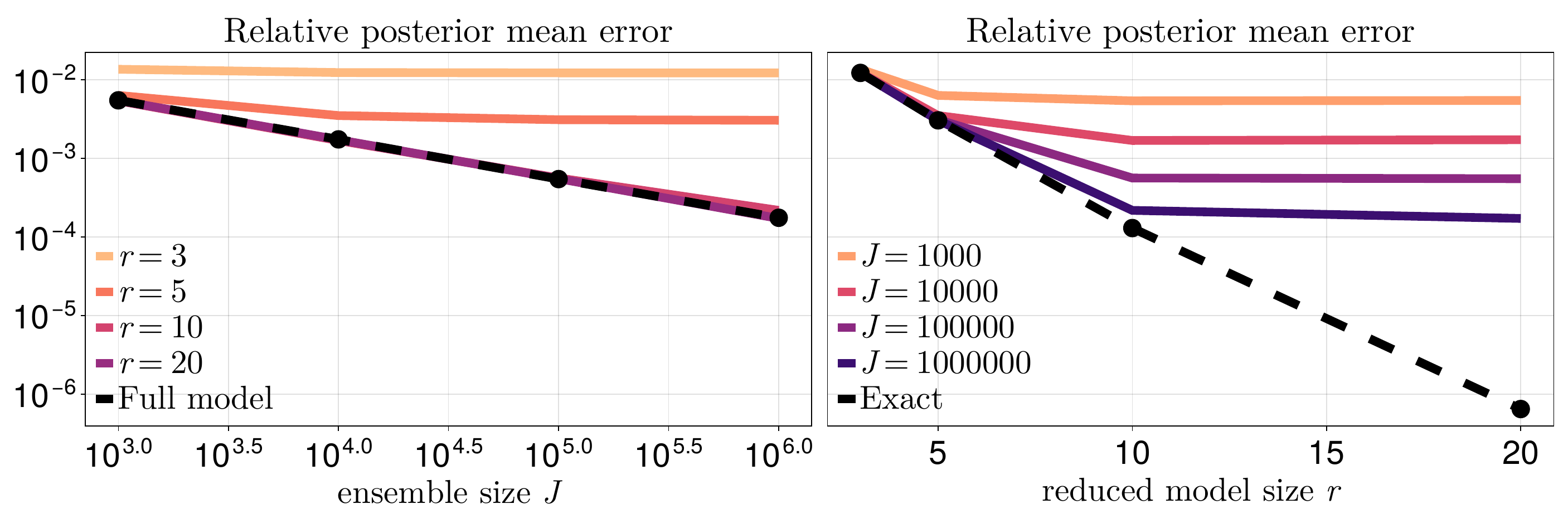}
    \caption{Relative error in posterior mean estimated via ensemble Kalman RMLE against ensemble size $J$ (left) and reduced model basis size $r$ (right). The full model line on the left panel refers to algorithmic evolution of the full model and the exact line on the right refers to errors computed with the exact posterior statistics of the reduced model \eqref{eq:BT_posterior}.}
    \label{fig:errsVSbasis}
\end{figure}

The left panel of \Cref{fig:errsVSbasis} plots error in the posterior mean against ensemble size for the reduced basis sizes we consider. The black dashed line, which represents the error of the full model, shows that the ensemble mean converges to the Bayesian posterior mean, as stated in \Cref{cor:application_to_BIP}, with accuracy improving as ensemble size increases. Note that the reduced models with basis sizes $r=10$ and $r=20$ have errors close to that of the full model; for these basis sizes the projection error is low and sampling error dominates. In contrast, for basis sizes $r =3$ and $r=5$, projection error is high and dominates sampling error. 

The right panel of \Cref{fig:errsVSbasis} plots error in the posterior mean vs reduced model sizes for different ensemble sizes. The black dashed line plots errors computed using the exact reduced posterior mean \eqref{eq:BT_posterior}; this coincides with the mean-field (infinite particle) limit. Colored lines plot the results from different finite ensemble sizes. These lines flatten out as $r$ increases, demonstrating that sampling error is dominating projection error from the balanced truncation reduced model.

\begin{figure}[htb!]
    \centering
    \includegraphics[width=\linewidth]{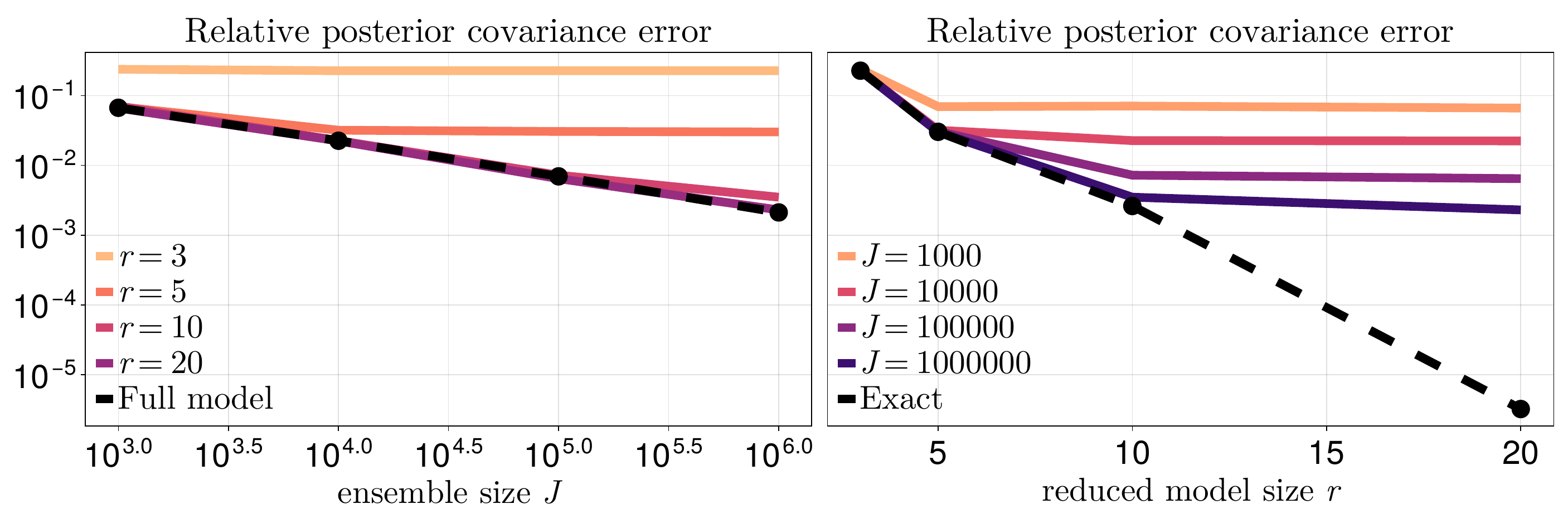}
    \caption{Relative error in posterior covariance estimated via ensemble Kalman RMLE against ensemble size $J$ (left) and reduced model basis size $r$ (right). The full model line on the left panel refers to algorithmic evolution of the full model and the exact line on the right refers to errors computed with the exact posterior statistics of the reduced model \eqref{eq:BT_posterior}.}
    \label{fig:errsVSparticles}
\end{figure}
Similar observations follow for the covariance error results presented in \Cref{fig:errsVSparticles}. For the left panel, which plots error in the posterior covariance against ensemble size, we notice that the ensemble covariance converges to the Bayesian posterior covariance (as proved in \Cref{cor:application_to_BIP}), and that reduced models with $r=10$ and $r=20$ have errors close to that of the full model. In the right panel, which shows posterior covariance error against reduced model size, we see that projection error dominates for small basis sizes and that sampling error dominates for larger basis sizes.
We remark that large ensemble sizes are needed to yield the accuracy presented in our plots, particularly for capturing the posterior covariance. This, however, is a typical challenge when approximating high-dimensional covariances from samples (in spectral norm) \cite{koltchinskii_concentration_2017}.
Overall, the results from \Cref{fig:errsVSparticles,fig:errsVSbasis} demonstrate that the proposed ensemble Kalman RMLE approach accurately estimates posterior statistics for linear Gaussian Bayesian inverse problems, and that the proposed strategy of accelerating ensemble Kalman RMLE with the balanced truncation approach of \Cref{subsec:BTBI} achieves negligible errors with an order-of-magnitude reduction in computational cost.

\section{Conclusion}\label{sec:Conclusions}

This work presents ensemble Kalman randomized maximum likelihood estimation (RMLE), a new derivative-free algorithm for randomized maximum likelihood estimation. The new method, which has connections to ensemble Kalman inversion, evolves an ensemble so that each ensemble member solves a randomized maximum likelihood estimation optimization problem. Linear analysis of the new method investigates convergence in two fundamental invariant subspaces for each the observation and state spaces. Theoretical results demonstrate that ensemble members converge exponentially fast to RMLE samples in the subspace that is observable and populated, and, when the method is applied to solve a suitably regularized optimization problem, ensemble members converge to samples from the Bayesian posterior.
The work also considers the problem of inferring the initial condition of a high-dimensional linear dynamical system, and shows how a balanced truncation model reduction approach can be used to accelerate the proposed method in this setting with negligible loss in accuracy. 

Numerical experiments on a randomly generated inverse problem verify linear theory, by illustrating convergence to the proposed infinite iteration ensemble mean and covariance limits. We also perform numerical experiments on a benchmark problem from the model reduction community that demonstrate convergence to the linear Bayesian posterior and reduced models that achieve accuracy close to the full model with an order-of-magnitude reduction in model size.

Although this work is mostly concerned with the linear case, we note that our method is also well-defined for nonlinear inverse problems. Understanding the algorithm's convergence in nonlinear and non-Gaussian problems is therefore a subject of study for future work. Other directions for future research include comparisons with other ensemble Kalman algorithms that sample from the Bayesian posterior, and development of methods for model reduction in nonlinear inverse problems.

\paragraph{Acknowledgements}
This work was supported by the United States
Air Force Office of Scientific Research (AFOSR) award FA9550-24-1-0105 (Program
Officer Dr. Fariba Fahroo).

\bibliographystyle{unsrt} 
\bibliography{main}


\end{document}